\colorlet{Changes@Color}{magenta}
\definecolor{darkblue}{rgb}{0.0, 0.2, 0.6}
\numberwithin{equation}{section}
\DeclareFontShape{T1}{lmr}{b}{sc}{<->ssub*cmr/bx/sc}{}
\DeclareFontShape{T1}{lmr}{bx}{sc}{<->ssub*cmr/bx/sc}{}
\renewcommand\paragraph{%
	\@startsection{paragraph}
	{4}
	{\z@}
	{3.25ex \@plus1ex \@minus.2ex}
	{-1em}
	{\normalfont\normalsize\bfseries\maybe@addperiod}%
}
\newcommand{\maybe@addperiod}[1]{%
	#1\@addpunct{.}%
}
\setlist[enumerate,1]{label=(\roman*), font = \normalfont} 
\let\originalleft\left
\let\originalright\right
\renewcommand{\left}{\mathopen{}\mathclose\bgroup\originalleft}
\renewcommand{\right}{\aftergroup\egroup\originalright}
\newlength{\bibitemsep}
\newlength{\bibparskip}\setlength{\bibparskip}{0pt}
\let\oldthebibliography\thebibliography
\renewcommand\thebibliography[1]{\oldthebibliography{#1}
  \setlength{\parskip}{\bibitemsep}
  \setlength{\itemsep}{\bibparskip}}
\theoremstyle{plain}  
\theoremstyle{definition}
\theoremstyle{remark}
\newcommand\N{\mathbbm{N}}
\newcommand\R{\mathbbm{R}}
\renewcommand\C{\mathbbm{C}}
\newcommand\Ub{\mathbbm{U}}
\newcommand\Hb{\mathbbm{H}}
\newcommand\xihat{\widehat{\xi}}
\newcommand\psihat{\widehat{\psi}}
\newcommand\qhat{\widehat{q}}
\newcommand\Uhat{\widehat{U}}
\newcommand\Xcal{\mathcal{X}}
\newcommand\Xbf{\mathbf{X}}
\newcommand\Mcal{\mathcal{M}}
\newcommand\Lcal{\mathcal{L}}
\newcommand\Xhat{\widehat{\mathcal{X}}}
\newcommand\Zbf{\mathbf{Z}}
\newcommand\Kcal{\mathcal{K}}
\newcommand\Fcal{\mathcal{F}}
\newcommand\Ehat{\widehat{E}}
\newcommand\Fhat{\widehat{F}}
\newcommand\Xbfhat{\widehat{\mathbf{X}}}
\newcommand\Hhat{\widehat{H}}
\newcommand\Pb{\mathbbm{P}}
\newcommand\Eb{\mathbbm{E}}
\newcommand\Pcal{\mathcal{P}}
\newcommand\Ecal{\mathcal{E}}
\newcommand\Phat{\widehat{\mathcal{P}}}
\newcommand\F{\mathscr{F}}
\newcommand\Pbf{\mathbf{P}}
\newcommand\Ebf{\mathbf{E}}
\newcommand\gtilde{\gamma}
\newcommand\n{\mathfrak{n}}
\newcommand\tb{t^{\bullet}}
\newcommand\zb{z^{\bullet}}
\newcommand\Xscr{\mathscr{X}}
\newcommand{\efrak}{\mathfrak{e}}
\newcommand\Gscr{\mathscr{G}}
\newcommand\Gcal{\mathcal{G}}
\newcommand{\hfrak}{\mathfrak{h}}
\newcommand{\sgn}[1]{sgn(#1)}
\newcommand{\Pbb}[3]{\Pb_{#1}^{#2\rightarrow #3}}
\newcommand{\Ebb}[3]{\Eb_{#1}^{#2\rightarrow #3}}
\newcommand\diag{\text{diag}}
\theoremstyle{plain}
\newtheorem{Thm}{Theorem}[section]
\newtheorem{Thm-fr}{Théorème}[section]
\newtheorem*{Thm*}{Theorem}
\newtheorem{Prop}[Thm]{Proposition}
\newtheorem*{Prop*}{Proposition}
\newtheorem*{Def*}{Definition}
\newtheorem{Lem}[Thm]{Lemma}
\newtheorem*{Cor*}{Corollary}
\newtheorem{Cor}[Thm]{Corollary}
\theoremstyle{definition}
\newtheorem*{Ex*}{Example}
\newtheorem{Rk}[Thm]{Remark}
\newtheorem*{Rmq*}{Remarques}
\newtheorem{Not}[Thm]{Notation}
\begin{document}

\vglue30pt

\centerline{\Large\bf  \textsc{Self-similar signed growth-fragmentations} }

\bigskip
\medskip

 \centerline{William Da Silva\footnote{\scriptsize LPSM, Sorbonne Universit\'e Paris VI, {\tt william.da-silva@lpsm.paris}}}

\bigskip

\bigskip

{\leftskip=2truecm \rightskip=2truecm \baselineskip=15pt \small


\noindent{\slshape\bfseries Summary.} 
Growth-fragmentation processes model the evolution of positive masses which undergo binary divisions. The aim of this paper is twofold. First, we extend the theory of growth-fragmentation processes to allow signed mass. Among other things, we introduce genealogical martingales and establish a spinal decomposition for the associated cell system, following \cite{BBCK}. Then, we study a particular family of such self-similar signed growth-fragmentation processes which arise when cutting half-planar excursions at horizontal levels. When restricting this process to the positive masses in a special case, we recover part of the family introduced by Bertoin, Budd, Curien and Kortchemski in \cite{BBCK}.

\medskip

\noindent{\slshape\bfseries Keywords.} Growth-fragmentation process, real-valued self-similar Markov process, spinal decomposition, stable process, excursion theory. 

\medskip
 
\noindent{\slshape\bfseries 2010 Mathematics Subject
Classification.} 60D05.

} 

\bigskip
\bigskip

%
%
\section{Introduction}
Markovian growth-fragmentation processes were first introduced in \cite{Ber-GF}. They describe a system of positive masses, starting from a unique cell called the Eve cell, which may evolve over time, and suddenly split into a mother cell and a daughter cell. This happens with conservation of mass at dislocations: the sum of the mother and daughter masses after dislocation is equal to the mass of the mother cell right before. The latter daughter cells then evolve independently of each other, with the same stochastic evolution as the mother cell, thereafter dividing in the same way, and giving birth to granddaughter cells, and so on. Thus, newborn particles arise according to the \emph{negative} jumps of the mass of the mother cell. 

Self-similar growth-fragmentation processes form a rich subclass of such models and have been extensively studied in the seminal article \cite{BBCK}. Natural genealogical martingales in particular arise from the so-called additive martingales in the branching random walk setting (see the Lecture notes \cite{ZShi}). These martingales depend on exponents which can be found as the roots of the growth-fragmentation \emph{cumulant function} $\kappa$. Performing the corresponding change of measure, \cite{BBCK} then completely describes the \emph{spinal decomposition} of the growth-fragmentation cell system. Under the new tilted probability measure, all the cells roughly behave as in the original cell system, except for the Eve cell, which behaves as some tilted version of it.

The article \cite{BBCK} further introduces a remarkable family of such processes that are closely linked to $\theta$--stable Lévy processes for $\frac12<\theta\le \frac32$, and relates them to the scaling limit of the exploration of a Boltzmann planar map. The case $\theta=\frac32$ was later recovered up to a time-change in \cite{LGR} when studying, among other things, the boundary sizes of superlevel sets of Brownian motion indexed by the Brownian tree, whereas the critical case $\theta=1$ appears in \cite{AD} when cutting a half-planar Brownian excursion at horizontal levels. The growth-fragmentation processes associated with parameters $\theta\in(1,\frac32)$ were also retrieved directly in the continuum in \cite{MSW} by exploring a conformal loop ensemble on an independent quantum surface.

In \cite{AD}, the masses of the cells correspond to the sizes of the excursions above horizontal levels, defined as the difference between the endpoint and the starting point. Note that, for these to fall into the growth-fragmentation framework, one has to remove all the negative excursions of the system. Moreover, a Boltzmann planar map can be seen as the gasket of a loop $O(n)$ model (see \cite{LGM}), and from this standpoint, a positive jump in the growth-fragmentation represent the discovery of a loop which has not yet been explored.

\bigskip

The present work extends the study in \cite{BBCK} to the case when the masses may be \emph{signed}. We therefore allow positive jumps to be birth events and give rise to negative cells, so that the conservation rule still holds at dislocations. Markov additive processes and the Lamperti-Kiu representation provide a very natural framework for this. 

We illustrate this with the following examples, which are a slight generalisation of the growth-fragmentation embedded in the half-planar Brownian excursions studied in \cite{AD}. For $z_0>0$, we consider an excursion from $0$ to $z_0$ in the upper half-plane $\Hb$, where the imaginary part is a one-dimensional Brownian excursion, but the real part is some instance of an $\alpha$--stable process, with $\alpha\in(1,2)$. For $a>0$, if the excursion hits the horizontal level $\{z\in \C, \; \Im(z)=a\}$, it will make a countable number of excursions $(e_i^{a,+}, \; i\ge 1)$ above this level. We record the \emph{sizes} $(\Delta e_i^{a,+},\; i\ge 1)$ of these excursions, defined as the difference between the endpoint and the starting point. Because both points have the same imaginary part, this yields a collection of real numbers. Our main result investigates the branching structure of this collection of sizes: we show that it behaves as a signed self-similar growth-fragmentation process. Furthermore, when removing the negative sizes in the genealogy, we prove that this model gives back the family of growth-fragmentation processes introduced in \cite{BBCK} for $\frac12<\theta<1$. 

\bigskip

\added{\textbf{Related work.} In the pure fragmentation framework, multitype self-similar fragmentation processes have been introduced and their structure described, in terms of the underlying Markov additive process, in \cite{Ste}.}

\bigskip

The paper is organised as follows. In Section \ref{sec:real-valued SSMP}, we provide some background on real-valued self-similar Markov processes and their Lamperti-Kiu representation. In Section \ref{sec:martingales}, we make use of a connection with multitype branching random walks to introduce genealogical martingales similar to the ones in \cite{BBCK}. Section \ref{sec:universality} is devoted to proving that the form of these martingales only depend on the growth-fragmentation itself. Along the way, we will define \emph{signed cumulant functions} that are the analogs of the cumulant function in the positive case. The spinal decomposition will be described in Section \ref{sec:spinal decomposition}. Finally, we will investigate in Section \ref{sec:excursion} a distinguished family of self-similar signed growth-fragmentations constructed by cutting half-planar excursions at horizontal levels.

\bigskip

\textbf{Acknowledgements:} I am grateful to \'{E}lie Aïdékon for suggesting me this project, and for his involvement and guidance all the way. I also thank Juan Carlos Pardo for stimulating discussions and for correcting some mistakes in a preliminary version of this paper, as well as Alex Watson for discussions about the change of measures in \cref{sec: martingale absorption excursions}. I am grateful to Grégory Miermont for suggesting some important changes. The last details of this work were completed at the University of Vienna, supported by the FWF grant P33083 on ``Scaling limits in random conformal geometry''. \added{After completing this work, I was informed that the link between growth-fragmentation processes and half-planar excursions was already predicted by Timothy Budd in an unpublished note. Finally, I want to thank an anonymous referee for their valuable comments and suggestions.}

%
%
\section{Real-valued self-similar Markov processes}
\label{sec:real-valued SSMP}
We first recall some aspects of the Lamperti-Kiu theory for real-valued self-similar Markov processes. The Lamperti representation \cite{Lam} reveals a correspondence between positive self-similar Markov processes and Lévy processes. In the real-valued case, there is a more general correspondence, called the Lamperti-Kiu representation, between self-similar Markov processes and Markov additive processes, which are needed to take into account the sign changes.   

\subsection{Markov additive processes} \label{sec:MAP}
Let $E$ be a finite set, endowed with the discrete topology, and $(\Gcal_t)_{t\ge 0}$ a standard filtration. A \emph{Markov additive process} (MAP) with respect to $(\Gcal_t)_{t\ge 0}$ is a càdlàg process $(\xi,J)$ in $\R\times E$ with law $\Pb$, such that $(J(t), t\ge 0)$ is a continuous--time Markov chain, and the following property holds: for all $i\in E$, $t\ge 0$, 
\begin{center}\emph{Conditionally on $J(t)=i$, the process $(\xi(t+s)-\xi(t),J(t+s))_{s\ge 0}$ is independent of $\Gcal_t$ and is distributed as $(\xi(s)-\xi(0),J(s))_{s\ge 0}$ given $J(0)=i$.}
\end{center}
We shall write $\Pb_i:=\Pb(\;\cdot\; |\,\xi(0)=0 \,\text{and}\, J(0)=i)$ for $i\in E$. Details on MAPs can be found in \cite{Asm}. In particular, their structure is known to be given by the following proposition.
\begin{Prop} \label{prop:MAP structure}
\added{The process $(\xi,J)$ is a Markov additive process if, and only if, there exist independent sequences $(\xi_i^n, n\ge 0)_{i\in E}$ and $(U_{i,j}^n, n\ge 0)_{i,j\in E}$, all independent of $J$, such that:}
\begin{itemize}
    \item for $i\in E$, $(\xi_i^n, n\ge 0)$ is a sequence of i.i.d. Lévy processes,
    \item for $i,j\in E$, $(U_{i,j}^n, n\ge 0)$ are i.i.d.,
    \item if $(T_n)_{n\ge 0}$ denotes the sequence of jump times of the chain $J$ (with the convention $T_0=0$), then for all $n\ge 0$,
\begin{equation} \label{eq:piecewise}
\xi(t) = \left(\xi(T_n^-)+ U_{J(T_n^-),J(T_n)}^n\right)\mathds{1}_{n\ge 1} + \xi_{J(T_n)}^n(t-T_n), \quad T_n\le t<T_{n+1}.
\end{equation}
\end{itemize}
\end{Prop}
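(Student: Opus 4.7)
The ``if'' direction is a direct verification. Starting from a process built from the given pieces, fix $t \ge 0$ and condition on $\{T_n \le t < T_{n+1}\} \cap \{J(t) = i\}$, which forces $J(T_n) = i$. Then $(\xi(t+\cdot) - \xi(t), J(t+\cdot))$ is assembled from the tail $\xi_i^n((t-T_n)+\cdot) - \xi_i^n(t-T_n)$, from the subsequent Lévy pieces $\xi_{J(T_{n+k})}^{n+k}$, the transition jumps $U_{J(T_{n+k}^-),J(T_{n+k})}^{n+k}$ for $k \ge 1$, and the strong Markov future of $J$ after $t$. Since the $\xi_j^m$ are i.i.d.\ Lévy processes with stationary independent increments and the $U_{j,k}^m$ are i.i.d., all independent of $J$, this future is independent of $\Gcal_t$ and distributed as $(\xi, J)$ under $\Pb_i$.

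For the converse direction, the plan is to extract the decomposition from $(\xi, J)$ along the jumps of $J$. Because $E$ is finite, the jump times $(T_n)$ of $J$ form an a.s.\ discrete, non-exploding sequence, and $J$ is constant on each $[T_n, T_{n+1})$. For $n \ge 0$ define the raw piece $\widehat\xi^{(n)}(s) := \xi(T_n + s) - \xi(T_n)$ for $s < T_{n+1} - T_n$, and for $n \ge 1$ the transition jump $U^n := \xi(T_n) - \xi(T_n^-)$. Applying the MAP property at the stopping time $T_n$ conditionally on $J(T_n) = i$, the shifted process is independent of $\Gcal_{T_n}$ with law $\Pb_i$. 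Under $\Pb_i$ the chain $J$ stays at $i$ until $T_1$, so on $[0, T_1)$ only $\xi$ evolves; iterating the MAP property on deterministic sub-intervals (equivalently, conditioning on $\{T_1 > s'\}$) shows that the restriction of $\xi$ to $[0, T_1)$ has stationary and independent increments, hence is the initial segment of a Lévy process with law $P_i$ depending only on $i$. Similarly, the joint law of $(J(T_1), \xi(T_1) - \xi(T_1^-))$ under $\Pb_i$ depends only on $i$, so the conditional law of $U^n$ given $(J(T_n^-), J(T_n)) = (i,j)$ depends only on $(i,j)$.

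On a possibly enlarged probability space, I would independently extend each $\widehat\xi^{(n)}$ into a full Lévy process $\widetilde\xi^{(n)}$ of law $P_{J(T_n)}$, then reindex: set $\xi_i^k := \widetilde\xi^{(n)}$ when $n$ is the $k$-th index with $J(T_n) = i$, and define $U_{i,j}^k$ analogously from $(U^n)$ restricted to $i \to j$ transitions. Identity \eqref{eq:piecewise} then holds by construction. I expect the main obstacle to be checking that these reindexed sequences are jointly i.i.d.\ in $k$ for each $i$ (and each $(i,j)$), mutually independent across different fibres, and independent of $J$. The cleanest route is to condition on the full skeleton $(J(T_n))_{n \ge 0}$ and iterate the MAP property at each $T_n$: given the skeleton, the successive raw pieces and transition jumps are independent with the prescribed marginal laws depending only on the current state (and transition type), and the independent extensions preserve this structure; integrating out the skeleton then yields the desired unconditional joint independence.
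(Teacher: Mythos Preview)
The paper does not actually prove Proposition~\ref{prop:MAP structure}. It is stated as a known structural result for MAPs, with the preceding sentence pointing the reader to \cite{Asm} for details; no proof is given in the paper. So there is nothing to compare your argument against here.

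That said, your sketch is the standard route and is essentially sound. A couple of places deserve a little more care if you intend this to stand as a full proof. First, you invoke the MAP property at the random times $T_n$; this is the \emph{strong} MAP property, which does hold here (c\`adl\`ag paths, discrete $E$), but you should say so explicitly rather than appeal to the defining property stated only for deterministic $t$. Second, your identification of $\xi$ on $[0,T_1)$ as a L\'evy segment uses that, under $\Pb_i$, conditioning on $\{T_1>s\}$ is compatible with the MAP property; this is fine because $\{T_1>s\}\in\Gcal_s$ and forces $J(s)=i$, but it is worth spelling out the one-line computation since the MAP property as stated only conditions on $J(t)=i$, not on the whole past of $J$. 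Your plan for the reindexing and the joint independence (condition on the skeleton $(J(T_n))_n$, then iterate) is exactly the right way to finish.
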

\noindent Proposition \ref{prop:MAP structure} describes $(\xi(t),t\ge 0)$ as a concatenation of independent Lévy processes with law depending on the current state of $J$, with additional random jumps occurring whenever the chain $J$ has a jump.

We now turn to defining the \emph{matrix exponent} of a MAP, which replaces the Laplace exponent in the setting of Lévy processes. For simplicity, we assume that $E=\{1,\ldots, N\}$ and that $J$ is irreducible. We write $Q=(q_{i,j})_{1\le i,j\le N}$ for its intensity matrix. Also, we denote for all $i,j\in E$, all on the same probability space, by $\xi_i$ a Lévy process distributed as the $\xi_i^n$'s, and by $U_{i,j}$ a random variable distributed as the $U
^n_{i,j}$'s, with the convention $U_{i,i}=0$ and $U_{i,j}=0$ if $q_{i,j}=0$. Finally, we introduce the Laplace exponent $\psi_i$ of $\xi_i$ and the Laplace transform $G_{i,j}(z):= \Eb\left[ \mathrm{e}^{zU_{i,j}}\right]$ of $U_{i,j}$ (this defines a matrix $G(z)$ with entries $G_{i,j}(z)$). Then the matrix exponent $F$ of $(\xi,J)$ is defined as 
\begin{equation} \label{eq: F matrix}
F(z) := \diag(\psi_1(z),\ldots,\psi_N(z))+Q\circ G(z),    
\end{equation}
where $\circ$ denotes pointwise multiplication of the entries. Then the following equality holds for all $i,j\in E$, $z\in \C$, $t\ge 0$, whenever one side of the equality is defined:
\[
\Eb_i\left[ \mathrm{e}^{z\xi(t)} \mathds{1}_{J(t)=j}\right] = (\mathrm{e}^{F(z)t})_{i,j}.
\]
\subsection{The Lamperti-Kiu representation} \label{sec:LK}
In \cite{Lam}, Lamperti proved that \added{positive self-similar Markov processes} can be expressed as the exponential of a time-changed Lévy process. In the real-valued case, one has to track the sign changes, but the same kind of representation holds. Let $X$ be a real-valued Markov process, which under $\Pb_z$ starts from $z\ne 0$, and denote by $T_0$ its first hitting time of $0$. We assume that $X$ is \emph{self-similar with index $\alpha$} in the following sense: for any $c>0$ and for all $z\ne 0$, the law of $(cX(c^{-\alpha}t),t\ge 0)$ under $\Pb_z$ is $\Pb_{cz}$. The next theorem summarises the main result of \cite{CPR}. Though it may appear intricate at first glance, we insist that the gist of it is intrinsically simple. It should be streamlined as follows. As long as $X$ remains positive (\emph{resp.} negative), it evolves as the exponential (\emph{resp.} minus the exponential) of a time-changed Lévy process. The Lévy processes keeping track of the positive and negative parts must not necessarily be equal. In addition, an exponential clock (modulo time-change) rings every time the sign of $X$ changes, and at these times a special jump occurs (again, the two exponential clocks and the law of the jumps may be different depending on the current sign of $X$).
\begin{Thm}(Lamperti-Kiu representation, \cite{CPR}) \label{thm:LK}

There exist independent sequences $(\xi^{\pm,k})_{k\ge 0}, (\zeta^{\pm,k})_{k\ge 0}, (U^{\pm,k})_{k\ge 0}$ of i.i.d. variables fulfilling the following properties:
\begin{enumerate}
    \item The $\xi^{\pm,k}$ are Lévy processes, the $\zeta^{\pm,k}$ are exponential random variables with parameter $q_{\pm}$, and the $U^{\pm,k}$ are real-valued random variables.
    \item For $z\ne 0$ and $k\ge 0$, if we define:
        \begin{itemize}
            \item[$\bullet$] 
                    $
                    (\xi^{z,k}, \zeta^{z,k}, U^{z,k}) 
                    :=
                    \begin{cases}
                    (\xi^{+,k}, \zeta^{+,k}, U^{+,k}) & \text{if} \; \sgn{z}(-1)^k = 1 \\
                    (\xi^{-,k}, \zeta^{-,k}, U^{-,k}) & \text{if} \; \sgn{z}(-1)^k=-1
                    \end{cases}
                    $
                    \item[$\bullet$] $\mathcal{T}^{(z)}_0:=0$ and $\mathcal{T}^{(z)}_n:=\sum_{k=0}^{n-1} \zeta^{(z,k)}$ for $n\ge 1$
                    \item[$\bullet$] $N^{(z)}_t:=\max\{n\ge 0, \; \mathcal{T}^{(z)}_n\le t\}$ and $\sigma^{(z)}_t:= t - \mathcal{T}^{(z)}_{N^{(z)}_t}$,
        \end{itemize}
    then, under $\Pb_z$, $X$ \textbf{has the representation}:
    \[
    X(t) = z\exp(\mathcal{E}^{(z)}_{\tau(t)}), \quad 0\le t< T_0,
    \]
    where 
    \[
    \mathcal{E}^{(z)}_t 
    :=
    \xi^{N^{(z)}_t}_{\sigma^{(z)}_t} + \sum_{k=0}^{N^{(z)}_t-1} \left(\xi^{(z,k)}_{\zeta^{(z,k)}}+ U^{(z,k)} \right) + i\pi N^{(z)}_t, \quad t\ge 0,
    \]
    and 
    \[
    \tau(t):= \inf\left\{s>0, \; \int_0^s |\exp(\alpha \mathcal{E}_u^{(z)})| \mathrm{d}u > t|z|^{-\alpha}\right\}, \quad t<T_0.
    \]
\end{enumerate}
Conversely, any process of this form is a self-similar Markov process with index $\alpha$.
\end{Thm}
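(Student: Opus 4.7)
The plan is to reduce to the classical Lamperti representation for positive self-similar Markov processes by decomposing $X$ at its successive sign changes. Let $\rho_0=0$ and, for $k\ge 1$, let $\rho_k$ denote the $k$-th time at which $X$ crosses $0$, considered only for $t<T_0$. On each stochastic interval $[\rho_k,\rho_{k+1})$ the sign $\varepsilon_k\in\{+,-\}$ is constant, and by the strong Markov property at $\rho_k$ combined with self-similarity of index $\alpha$, the rescaled process $s\mapsto |X(\rho_k)|^{-1}X(\rho_k+|X(\rho_k)|^{\alpha}s)$ is a self-similar Markov process starting from $\varepsilon_k$. After flipping the sign when $\varepsilon_k=-1$, Lamperti's classical theorem \cite{Lam} produces a Lévy process $\xi^{\varepsilon_k,k}$ whose law depends only on $\varepsilon_k$, and the strong Markov property at the $\rho_k$'s ensures that the sequence $(\xi^{\varepsilon_k,k})_k$ is independent.

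At each sign-change time $\rho_{k+1}$, the jump ratio $X(\rho_{k+1})/X(\rho_{k+1}^-)$ is negative, and by self-similarity its law depends only on $\varepsilon_k$; writing this ratio as $-\exp(U^{\varepsilon_k,k})$ extracts the jump variable $U^{\varepsilon_k,k}$, while the $i\pi$ term appearing in $\mathcal{E}^{(z)}_t$ accounts for the sign change in the logarithm. Similarly, the Lamperti time-change sends $[\rho_k,\rho_{k+1})$ to an interval of length $\zeta^{\varepsilon_k,k}$; applying the strong Markov property at a generic time in this interval and invoking self-similarity shows that the residual lifetime until the next sign change depends only on the current sign, hence is memoryless, hence exponentially distributed with a rate $q_{\varepsilon_k}\in[0,\infty]$. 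Independence of the three sequences then follows from the strong Markov structure. Assembling the pieces and composing with the global time change $\tau(t):=\inf\{s>0:\int_0^s|\exp(\alpha\mathcal{E}^{(z)}_u)|\,\mathrm{d}u>t|z|^{-\alpha}\}$ yields the announced formula.

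For the converse, given the independent sequences, the formula directly defines a process. The Markov property follows because the ingredients are i.i.d. and the sign of $z\exp(\mathcal{E}^{(z)}_{\tau(t)})$ evolves as a two-state chain with exponential holding times. Self-similarity of index $\alpha$ follows by inspection: rescaling the starting point by $c$ leaves the ingredients $(\xi^{\pm,k},\zeta^{\pm,k},U^{\pm,k})$ unchanged, so $\mathcal{E}^{(cz)}=\mathcal{E}^{(z)}$ in distribution, and the factor $|z|^{-\alpha}$ in the definition of $\tau$ exactly compensates the time rescaling $t\mapsto c^{-\alpha}t$. The main obstacle I anticipate is the exponential character of the sign-change clocks: to establish it one must simultaneously invoke the strong Markov property at arbitrary times of constant sign and use self-similarity to conclude that, in the Lamperti time scale, the residual time until the next sign change depends only on the current sign and not on $|X|$.
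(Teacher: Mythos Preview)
The paper does not prove this theorem: it is stated as a citation of the main result of \cite{CPR} (Chaumont, Pant\'i and Rivero), and no argument is given in the text beyond the informal description preceding the statement. There is therefore nothing in the paper to compare your proposal against.

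For what it is worth, your sketch follows the same architecture as the original proof in \cite{CPR}: decompose $X$ at its successive sign changes, apply the classical Lamperti representation on each constant-sign piece, and use the strong Markov property together with self-similarity to identify the sign-change clocks as exponential and the jump ratios as i.i.d. The one point you flag as delicate --- that in the Lamperti time scale the residual time until the next sign change depends only on the current sign --- is indeed the heart of the matter, and \cite{CPR} handles it by working directly with the process $\log|X|$ run in its natural (Lamperti) clock and invoking the strong Markov property there. Your proposal is a faithful outline of that argument, but since the present paper simply quotes the result, there is no in-paper proof to match it to.
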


This can be rephrased in the language of Markov additive processes, as was pointed out in \cite{KKPW}.

\begin{Prop} \label{prop:LK}
Let $X$ be a real-valued self-similar Markov process, with Lamperti-Kiu exponent $\mathcal{E}$. Introduce for $z\ne 0$,
\[
(\xi^{(z)}(t), J^{(z)}(t)):= \left(\Re(\mathcal{E}^{(z)}_t), \left[\frac{\Im(\mathcal{E}^{(z)}_t)}{\pi} + \mathds{1}_{z>0}\right]\right), \quad t\ge 0,
\]
where $[\cdot]$ denotes reduction \emph{modulo} $2$.

\noindent Then $(\xi^{(z)}(t), J^{(z)}(t))$ is a MAP with state space $\{0,1\}$ and under $\Pb_z$ for all $z\ne 0$,
\[
X(t) = z\exp\left(\xi^{(z)}(\tau(t)) + i\pi (J^{(z)}(\tau(t))+1)\right), \quad 0\le t < T_0,
\]
where, in terms of $\xi^{(z)}$, 
\[
\tau(t):= \inf\left\{s>0, \; \int_0^s \exp(\alpha \xi^{(z)}(u)) \mathrm{d}u > t|z|^{-\alpha}\right\}, \quad t<T_0.
\]
\end{Prop}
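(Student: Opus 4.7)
The plan is to read the MAP structure directly off the Lamperti-Kiu representation of Theorem \ref{thm:LK}. The complex exponent $\mathcal{E}^{(z)}$ takes values in $\R + i\pi\Z$ with $\Im(\mathcal{E}^{(z)}_t) = \pi N^{(z)}_t$, so projecting its imaginary part modulo $2\pi$ naturally produces a two-state process that will serve as the MAP chain, while its real part $\xi^{(z)}$ serves as the additive component. The verification amounts to three steps: (i) $J^{(z)}$ is a continuous-time Markov chain on $\{0,1\}$; (ii) the pair $(\xi^{(z)}, J^{(z)})$ fits the piecewise description of Proposition \ref{prop:MAP structure}; (iii) the two displayed identities hold.

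For (i), $J^{(z)}$ is piecewise constant and changes state exactly at the times $\mathcal{T}^{(z)}_n$. By Theorem \ref{thm:LK}, the sojourn times are independent exponentials of parameter $q_+$ when the current state is $1$ and of parameter $q_-$ when the current state is $0$; the offset $\mathds{1}_{z>0}$ in the definition of $J^{(z)}$ is chosen precisely so that state $1$ always encodes $X>0$ and state $0$ always encodes $X<0$, regardless of the sign of the starting point. This identifies $J^{(z)}$ as a continuous-time Markov chain on $\{0,1\}$ with intensity matrix
\[
Q = \begin{pmatrix} -q_- & q_- \\ q_+ & -q_+ \end{pmatrix}.
\]

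For (ii), between consecutive jumps of $J^{(z)}$ the process $\xi^{(z)} = \Re(\mathcal{E}^{(z)})$ evolves as one of the Lévy processes $\xi^{\pm,k}$, whose law depends only on the current state of $J^{(z)}$; and at each jump time $\mathcal{T}^{(z)}_n$, an extra jump $U^{\pm,k}$ is added whose law depends only on the direction of the sign change, so that $U_{0,1} \stackrel{d}{=} U^{-}$, $U_{1,0} \stackrel{d}{=} U^{+}$, and $U_{0,0} = U_{1,1} = 0$. The mutual independence of the families $(\xi^{\pm,k}, \zeta^{\pm,k}, U^{\pm,k})_{k\ge 0}$ in Theorem \ref{thm:LK} is precisely what is required by Proposition \ref{prop:MAP structure}, and so $(\xi^{(z)}, J^{(z)})$ is a MAP on $\{0,1\}$.

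For (iii), the time-change identity is immediate from $|\exp(\alpha\mathcal{E}^{(z)}_u)| = \exp(\alpha\Re(\mathcal{E}^{(z)}_u)) = \exp(\alpha\xi^{(z)}(u))$, so $\tau$ is the same functional of $\xi^{(z)}$ as of $\mathcal{E}^{(z)}$. The representation of $X$ then follows by writing $\exp(\mathcal{E}^{(z)}_\tau) = \exp(\xi^{(z)}(\tau))\exp(i\pi N^{(z)}(\tau))$ and unwinding the modular relation between $N^{(z)}(\tau)$ and $J^{(z)}(\tau)+1$ via the indicator offset. I do not expect any genuine obstacle: the proposition is essentially a repackaging of Theorem \ref{thm:LK} in the language of Markov additive processes, and the only mildly delicate point is the sign bookkeeping in the last identity, which is exactly what the $\mathds{1}_{z>0}$ offset is designed to absorb.
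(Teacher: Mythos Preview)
The paper does not actually supply a proof of this proposition: it is stated as a direct rephrasing of Theorem~\ref{thm:LK} in the language of MAPs, with the observation attributed to \cite{KKPW}. Your proposal is therefore not being compared against a paper proof but rather filling in what the paper leaves implicit. Your three-step plan is the natural one and is correct in substance: the Markov chain structure of $J^{(z)}$ follows from the independent exponential sojourn times $\zeta^{\pm,k}$, the piecewise description \eqref{eq:piecewise} is read off directly from Theorem~\ref{thm:LK}, and the time-change identity is immediate since $|\exp(\alpha\mathcal{E}^{(z)}_u)| = \exp(\alpha\xi^{(z)}(u))$.

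One small caution on step (iii): you say the sign bookkeeping is ``mildly delicate'' and that the offset $\mathds{1}_{z>0}$ absorbs it, but you do not actually carry out the verification. If you do so, be careful: checking the displayed formula $X(t) = z\exp(\xi^{(z)}(\tau) + i\pi(J^{(z)}(\tau)+1))$ at $t=0$ for $z<0$ gives $z\exp(i\pi) = -z$, which does not equal $X(0)=z$. This suggests either a sign convention in the paper that needs unpacking (for instance, the intended reading may be $|z|$ rather than $z$ as prefactor, with the sign carried entirely by the exponential) or a minor typo in the stated identity. Since the proposition is quoted from \cite{KKPW} and is used in the paper only through the fact that $(\xi^{(z)},J^{(z)})$ is a MAP with the described characteristics, this does not affect anything downstream; but if you write out the proof in full, flag the discrepancy or adjust the formula to match the parity check rather than claiming the identity holds verbatim.
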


%
%
\section{Martingales in self-similar growth-fragmentation with signs}
\label{sec:martingales}
We follow closely \cite{Ber-GF} and \cite{BBCK} to extend the construction to real-valued driving processes. Let $X$ be a càdlàg Feller process which is self-similar in the sense of Section \ref{sec:LK}, with values in $\R^*$. Denote by $\Pb_z$ the law of $X$ started from $z\neq 0$, and assume that $X$ is either absorbed at a cemetery point $\partial$ after a finite time $\zeta$ or converges \added{(to 0)} as $t\rightarrow\infty$ under $\Pb_z$ for all $z$. Introduce the MAP $(\xi,J)$ associated to $X$ via the Lamperti-Kiu representation in Proposition \ref{prop:LK}, and denote by $F$ its matrix exponent. Recall that this matrix exponent is determined by the law of the Lévy processes $\xi_{\pm}$, special jumps $U_{\pm}$, and random clocks $\zeta_{\pm}$ (which are exponential with parameter $q_{\pm}$) dealing with the parts of the trajectory where $X$ is positive or negative. Recall also the notation $\Pb_{\pm}$ to denote the law of $X$ starting from $\pm 1$ (and \added{$\Eb_{\pm}$ for the corresponding expectation}). We further write $\Delta X(r) = X(r)-X(r^-)$ for the jump of $X$ at time $r$.

\subsection{Self-similar signed growth-fragmentation processes} \label{sec:growth-frag}
We now explain how to construct the \emph{cell system} driven by $X$. As usual, we label the cells using the Ulam tree $\Ub = \bigcup_{i=0}^{\infty} \N^i$, where in our notation $\N = \left\{1, 2\ldots \right\}$, and $\N^0:=\{\varnothing\}$ refers to the \emph{Eve cell}. A node $u\in \Ub$ is a list $(u_1,\ldots,u_i)$ of positive integers where $|u|=i$ is the \emph{generation} of $u$. Then the offspring of $u$ is labelled by the lists $(u_1,\ldots,u_i,k)$, with $k\in \N$.

We then define the cell system $(\Xcal_u(t),u\in \Ub)$ driven by $X$ by recursion. Again, we repeat the procedure in \cite{BBCK}, except that we include the positive jumps in the genealogy. First, set $b_{\varnothing}=0$ and $\Xcal_{\varnothing}$ to be distributed as $X$ started from some mass $z\ne 0$. Then at each jump of $\Xcal_{\varnothing}$, we will create a new particle with mass given by the opposite of this jump (so that the mass is conserved at each splitting). Since $X$ converges at infinity, one can rank the sequence of jump sizes and times $(x_1,\beta_1),(x_2,\beta_2),\ldots$ of $-\Xcal_{\varnothing}$ by descending lexicographical order for the absolute value of the $x_i$. Given this sequence of jumps, we define the first generation $\Xcal_i, i\in \N,$ of our cell system as independent processes with respective law $\Pb_{x_i}$, and we set $b_i = b_{\varnothing}+\beta_i$ for the \emph{birth time} of $i$ and $\zeta_i$ for its lifetime. The law of the $n$-th generation is constructed given generations $1,\ldots, n-1$ following the same procedure. Therefore, a cell $u'=(u_1,\ldots,u_{n-1})\in \N^{n-1}$ gives birth to the cell $u=(u_1,\ldots,u_{n-1},i)$, with lifetime $\zeta_u$, at time $b_u = b_{u'}+\beta_{i}$ where $\beta_{i}$ is the $i$-th jump of $\Xcal_{u'}$ (in terms of the same ranking as before). Moreover, conditionally on the jump sizes and times of $\Xcal_{u'}$, $\Xcal_u$ has law $\Pb_y$ with $-y=\Delta \Xcal_{u'}(\beta_i)$ and is independent of the other daughter cells at generation $n$. See Figure \ref{fig:GF_construction}.

Beware that, in this construction, the cells are not labelled chronologically. Nonetheless, exactly as in \cite{BBCK}, this uniquely defines the law $\Pcal_z$ of the cell system driven by $X$ and started from $z$. The cell system $(\Xcal_u(t),u\in \Ub)$ is meant to describe the evolution of a population of atoms $u$ with size $\Xcal_u(t)$ evolving with its age $t$ and fragmenting in a binary way. We stress once more that the difference with \cite{BBCK} is that the masses can be negative and that the genealogy also carries the positive jumps (which corresponds to negative newborn masses).

Then we may define for $t\ge 0$
\[
\Xbf(t) := \left\{\left\{ \Xcal_u(t-b_u), \; u\in\Ub \; \text{and} \; b_u\le t<b_u+\zeta_u \right\}\right\},
\]
where the double brackets denote multisets. In other words, the \emph{signed growth-fragmentation process} $\Xbf(t)$ is the collection of the sizes of all the cells in the system alive at time $t$. We denote by $\Pbf_z$ the law of $\Xbf$ started from $z$.

\begin{Rk}
This construction does not require $X$ to be self-similar.
\end{Rk}

\begin{figure}[h] 
\begin{center}
\includegraphics[scale=0.7]{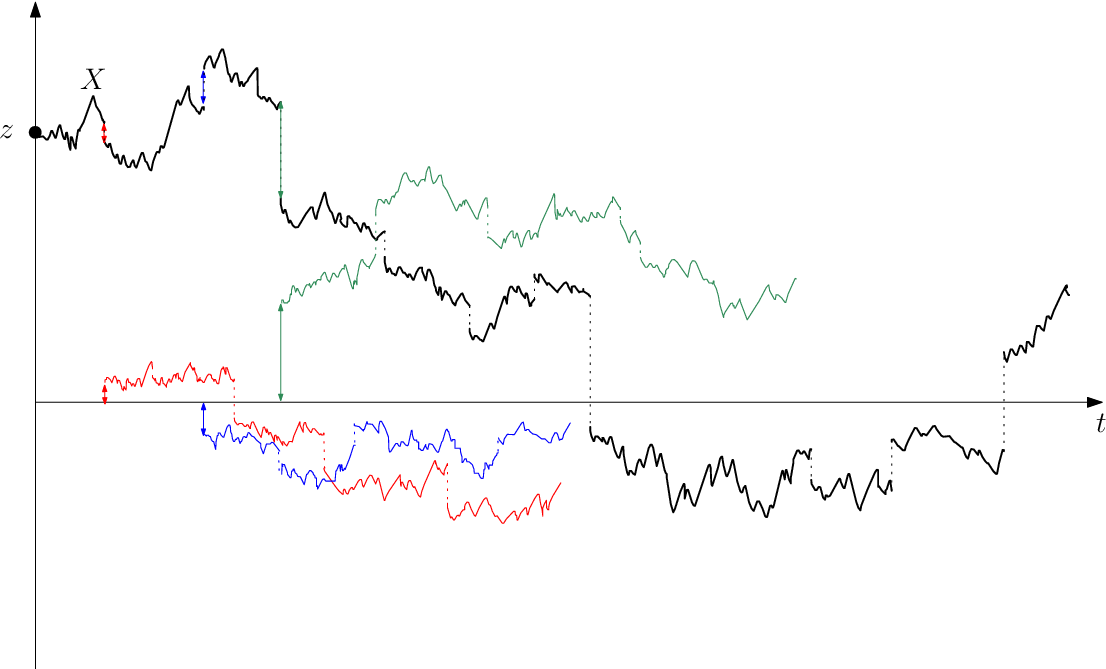}
\end{center}
\caption{Construction of the cell system from the driving process $X$. Each jump $\Delta X(s)$ of $X$ gives birth to a new particle (in colours), with size given by the intensity $-\Delta X(s)$ of the jump. These particles, in turn, give rise to the second generation (not shown in this figure).}
\label{fig:GF_construction}
\end{figure}

We now state a temporal version of the branching property. Introduce the natural filtration $(\Fcal_t)_{t\ge 0}$ of $(\Xbf(t),t\ge 0)$. As we shall need a stronger version of the branching property, we also record the generations by setting 
\[ 
\overline{\Xbf} (t) :=  \left\{\left\{ (\Xcal_u(t-b_u),|u|), \; u\in\Ub \; \text{and} \; b_u\le t<b_u+\zeta_u \right\}\right\}, \quad t\ge 0,
\]
and we denote by $(\overline{\Fcal}_t)_{t\ge 0}$ the filtration associated to $\overline{\Xbf}$. We assume that $\Xbf$ admits an \emph{excessive function}, that is a measurable function $f:\R^* \rightarrow [0,+\infty)$ such that $\underset{|x|>a}\inf f(x)>0$ for all $a>0$, and 
\begin{equation}\label{eq: excessiveness}
\forall z\in \R^*, \forall t\ge 0, \quad \Eb_z\left( \sum_{x\in \Xbf(t)} f(x)\right) \le f(z).
\end{equation}
In this case, we may rank the elements $X_1(t), X_2(t), \ldots$ of $\Xbf(t)$ in non-increasing order of their absolute value. For self-similar processes, the existence of such excessive functions will result from the assumptions that we will make later on. Then we have the following branching property, analogous to Lemma 3.2 in \cite{BBCK}. 
\begin{Thm} \label{thm:branching}
For every $t\ge 0$, conditionally on $\overline{\Xbf}(t) = \{\{(x_i,n_i), i\ge 1\}\}$, the process $(\overline{\Xbf}(t+s), s\ge 0)$ is independent of $\overline{\Fcal}_t$ and is distributed as 
\[
\bigsqcup_{i\ge 1} \overline{\Xbf}_i(s) \circ \theta_{n_i}, \quad s\ge 0,
\]
where the $\overline{\Xbf}_i$ are independent processes distributed as $\overline{\Xbf}$ under $\Pbf_{x_i}$, and $\theta_n$ is the shift operator $\{\{(y_j, k_j), j\ge 1\}\} \circ \theta_n := \{\{(y_j, k_j+n), j\ge 1\}\}$.
\end{Thm}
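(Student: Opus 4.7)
The plan is to follow closely the proof of Lemma 3.2 in \cite{BBCK}, adapting the argument to the signed setting and to the enriched process $\overline{\Xbf}$ which also records generations. I would proceed by truncating at a generation cutoff $n$, proving the branching property for the finite sub-collection of cells with $|u|\le n$ alive at time $t$, and then passing to the limit $n\to\infty$ by means of the excessive function assumption.

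Fix $t\ge 0$ and let $\mathcal{A}_t := \{u\in \Ub : b_u \le t < b_u + \zeta_u\}$ denote the set of cells alive at time $t$. For each $u\in \mathcal{A}_t$, the recursive construction gives that, conditionally on the history up to $b_u$ and on $\Xcal_u(0)$, the process $\Xcal_u$ together with all its descendants forms an independent cell system started from $\Xcal_u(0)$ under $\Pcal_{\Xcal_u(0)}$. Applying the strong Markov property of the Feller process $X=\Xcal_u$ at the $\Fcal_t$-stopping time $t-b_u$, the future trajectory $(\Xcal_u(t-b_u+s),s\ge 0)$ is independent of $\overline{\Fcal}_t$ and distributed as $X$ under $\Pb_{x_u}$ with $x_u:=\Xcal_u(t-b_u)$. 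Since the subtree of descendants born to $u$ after time $t$ is constructed deterministically from the jumps of this future trajectory and from independent driving processes attached to the newborn cells, the entire subtree rooted at $u$ from time $t$ onwards is independent of $\overline{\Fcal}_t$ and distributed as $\overline{\Xbf}$ under $\Pbf_{x_u}$. Independence across distinct $u,u'\in\mathcal{A}_t$ follows from the fact that in the recursive construction the auxiliary driving processes attached to different cells are independent.

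The shift operator $\theta_{n_i}$ is required because, in the restarted subsystem rooted at $u$, the cell $u$ plays the role of the Eve cell, but with generation $|u|=n_i$ rather than $0$; a descendant of $u$ at generation $|u|+k$ in the original system corresponds to a descendant at generation $k$ in the restarted one, which is exactly what $\theta_{n_i}$ records. To aggregate these independent subtrees into the statement of the theorem, I would argue that every cell $v\in \Ub$ alive at time $t+s$ has a unique ancestor $u\in \mathcal{A}_t$, so that $\overline{\Xbf}(t+s)$ decomposes as the disjoint multiset union of the shifted subsystems rooted at the cells of $\mathcal{A}_t$.

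The main obstacle is the passage to the limit in the generation cutoff and the justification that $\mathcal{A}_t$ is countable and the ranking of its elements (by non-increasing absolute value of $x_u$) is well-defined. This is precisely where the excessive function hypothesis enters: the bound $\Eb_z(\sum_{x\in \Xbf(t)} f(x))\le f(z)$ with $\inf_{|x|>a}f(x)>0$ forces $\Xbf(t)$ to contain only finitely many cells of absolute mass exceeding $a$, for every $a>0$, and similarly for the tail contribution from generations greater than $n$ (which can be controlled by iterating the excessive bound along the genealogy). A standard monotone class argument combined with this uniform integrability then upgrades the finite-generation branching statement to the full conditional equality in distribution claimed in the theorem.
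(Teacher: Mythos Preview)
Your proposal is correct and is essentially the approach the paper has in mind: the paper does not give a self-contained proof but simply states that the result follows from the arguments of Proposition~2 in \cite{B}, which are precisely the Markov-property-plus-excessive-function arguments you have sketched (your citation of Lemma~3.2 in \cite{BBCK} is the self-similar analogue of the same proof). Your expansion of the details---the generation truncation, the strong Markov property applied cell-by-cell, and the role of the excessive function in controlling the tail---is a faithful unpacking of what the cited reference does.
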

\noindent This theorem follows from the arguments given in Proposition 2 in \cite{Ber-GF} which holds as soon as $\Xbf$ has an excessive function. Under the same condition, the self-similarity of the driving cell process $X$ can be transferred to the whole growth-fragmentation process (see \cite[Theorem 2]{Ber-GF} for the nonnegative case, which extends easily).

\begin{Prop} For all $c>0$ and for all $z\ne 0$, the law of $(c\Xbf(c^{-\alpha}t),t\ge 0)$ under $\Pbf_z$ is $\Pbf_{cz}$.
\end{Prop}

\subsection{Multitype branching random walks and a genealogical martingale}
\label{sec:genealogical martingale}
We use a connection with branching random walks to exhibit a genealogical martingale as in \cite{BBCK}. We emphasize that the main difference with \cite{BBCK} is that in the signed case one has to deal with types (signs) in the branching structure, so that the relevant framework is provided by multitype branching random walks.
\\ \\\noindent \textbf{Multitype branching random walks.} We start by recalling the main features of multitype branching random walks. Let $I = \{1,\ldots, K\}$ be a set of \emph{types} with $K\ge 1$. The branching mechanism is then governed by $K$ random sequences of displacements $\Xi^{(k)} = (\xi^{(k)}_1,\ldots,\xi^{(k)}_{\nu})$, and types $\eta^{(k)}= (\eta^{(k)}_1,\ldots,\eta^{(k)}_{\nu})$, $k\in I$, where $\nu$ is also random and can be infinite. Denote by $\Pb_k$ the law of $(\Xi^{(k)}, \eta^{(k)})$.  Start from a particle $\varnothing$ at position $X_{\varnothing}=x\in \R$ and with initial type $k\in I$. At time $n=1$ this particle dies, giving birth to a random cloud of particles whose displacements from their parent and types are distributed as $\Xi^{(k)}$ and $\eta^{(k)}$ respectively. At time $n=2$, all these particles die and give birth in the same fashion to children of their own, independently of one another and independently of the past. This construction is repeated indefinitely as long as there are particles in the system. Therefore, if $u\in \Ub$ has type $i\in I$, it gives birth at the next generation to $\nu(u)$ particles with displacements $(X_{u1},\ldots, X_{u\nu(u)})$ and types $(i_{u1},\ldots,i_{u\nu(u)})$ according to $\Pb_i$. Also, we write $\llbracket \varnothing, u\rrbracket$ for the unique shortest path connecting the root to the node $u$, so that 
\[
V(u):= \sum_{v\in\llbracket \varnothing, u\rrbracket} X_v,
\]
is the position of particle $u$. We denote by $(\mathscr{F}_n)_{n\ge 0}$ the natural filtration of the multitype branching random walk, \textit{i.e.}
\[ 
\mathscr{F}_n := \sigma \left((X_u,i_u), \; |u|\le n \right).
\]
For $q\in\R$ define $m(q)$ to be the $K\times K$--matrix with entries
\[
m_{ij}(q):= \Eb_i\bigg[ \sum_{|v|=1} \mathrm{e}^{-qX_v} \mathds{1}_{i_v=j}\bigg].
\]
We then make the following assumption.
\medskip
\begin{center} \textit{Assumption (A0):} $\quad \forall i,j, \quad \Pb_i(\exists 1\le l \le\nu, \; \eta^{(i)}_l=j)>0.$ \end{center}
\medskip
Then the matrix $m(q)$ is positive and we may apply Perron-Frobenius theory. Let $\mathrm{e}^{\lambda(q)}$ be its largest eigenvalue and $v(q)=(v_1(q),\ldots,v_K(q))$ an associated positive eigenvector. Then we have the following result.

\begin{Thm} \label{thm:BRW martingale} For any $1\le i\le K$, under $\Pb_i$, the process
\[
M_n:= \sum_{|u|=n} v_{i_u}(q) \mathrm{e}^{-qV(u)-n\lambda(q)}, \quad n\ge 0,
\]
is a martingale with respect to the filtration $(\mathscr{F}_n)_{n\ge 0}$.
\end{Thm}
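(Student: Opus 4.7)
The plan is to verify the martingale property by direct conditional-expectation computation, using the branching structure at each generation and the Perron–Frobenius eigenvector equation $m(q) v(q) = \mathrm{e}^{\lambda(q)} v(q)$.

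First I would check integrability by induction. For $n = 0$ one has $M_0 = v_i(q) \mathrm{e}^{-qx}$, trivially finite. Assuming $\Eb_i[M_n] < \infty$, I would compute $\Eb_i[M_{n+1} \mid \mathscr{F}_n]$ by decomposing the sum over $|u| = n+1$ according to parents at generation $n$:
\begin{equation*}
M_{n+1} = \mathrm{e}^{-(n+1)\lambda(q)} \sum_{|u|=n} \mathrm{e}^{-qV(u)} \sum_{k=1}^{\nu(u)} v_{i_{uk}}(q) \mathrm{e}^{-q X_{uk}}.
\end{equation*}
By construction of the multitype branching random walk, conditionally on $\mathscr{F}_n$, the families $\bigl( (X_{uk}, i_{uk})_{1 \le k \le \nu(u)} \bigr)$ for $|u| = n$ are independent, and the one attached to $u$ has the law of $\Xi^{(i_u)}, \eta^{(i_u)}$ under $\Pb_{i_u}$. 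Hence
\begin{equation*}
\Eb_i\!\left[ \sum_{k=1}^{\nu(u)} v_{i_{uk}}(q) \mathrm{e}^{-qX_{uk}} \,\Big|\, \mathscr{F}_n \right] = \Eb_{i_u}\!\left[ \sum_{|w|=1} v_{i_w}(q) \mathrm{e}^{-qV(w)} \right] = \sum_{j=1}^{K} m_{i_u, j}(q)\, v_j(q),
\end{equation*}
where in the last equality I simply unfolded the definition of $m_{ij}(q)$ and grouped children of type $j$.

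The key algebraic step is then the eigenvector identity: by Perron–Frobenius (valid under Assumption (A0), which ensures $m(q)$ is a positive matrix), $\sum_j m_{i_u, j}(q) v_j(q) = \mathrm{e}^{\lambda(q)} v_{i_u}(q)$. Substituting back,
\begin{equation*}
\Eb_i[M_{n+1} \mid \mathscr{F}_n] = \mathrm{e}^{-(n+1)\lambda(q)} \sum_{|u|=n} \mathrm{e}^{-qV(u)} \mathrm{e}^{\lambda(q)} v_{i_u}(q) = M_n,
\end{equation*}
which is the martingale identity. Integrability then propagates via $\Eb_i[M_{n+1}] = \Eb_i[M_n] = \cdots = v_i(q)\mathrm{e}^{-qx}$.

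I do not expect a real obstacle here: the only delicate point is making sure the conditional Fubini exchange of sum and expectation inside $M_{n+1}$ is legitimate when $\nu$ may be infinite, which is taken care of by nonnegativity of every term $v_{i_{uk}}(q) \mathrm{e}^{-qX_{uk}}$ (positivity of the Perron eigenvector). Everything else is a one-line application of the eigenvector equation, exactly as in the standard single-type additive martingale, with $v_{i_u}(q)$ playing the role of the $\beta^{V(u)}$-like weight that makes the multitype contributions balance.
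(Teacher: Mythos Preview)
Your proof is correct and follows essentially the same approach as the paper: decompose the sum at generation $n+1$ over parents at generation $n$, apply the branching property to reduce the inner conditional expectation to $\sum_j m_{i_u,j}(q)v_j(q)$, and conclude via the Perron--Frobenius eigenvector equation. Your explicit remarks on integrability and the nonnegativity justification for Fubini are slightly more careful than the paper's write-up, but the argument is otherwise identical.
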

\begin{proof}
$(M_n)_{n\ge 0}$ is $(\mathscr{F}_n)_{n\ge 0}$--adapted, and for $n\ge 0$,
\begin{align}
\Eb_i\left[M_{n+1} \; | \; \mathscr{F}_n\right] &= \Eb_i \left[\sum_{|u|=n} \sum_{|w|=1} v_{i_{uw}}(q) \mathrm{e}^{-qV(u)-qX_{uw}-(n+1)\lambda(q)}\;\Bigg| \; \mathscr{F}_n \right] \notag \\
&= \sum_{|u|=n} \mathrm{e}^{-qV(u)-(n+1)\lambda(q)}\Eb_i\left[\sum_{|w|=1} v_{i_{uw}}(q) \mathrm{e}^{-qX_{uw}}\;\Bigg| \; \mathscr{F}_n \right]. \label{eq:BRW martingale}
\end{align}
By the branching property, for all $|u|=n$,
\[
\Eb_i\left[\sum_{|w|=1} v_{i_{uw}}(q) \mathrm{e}^{-qX_{uw}}\;\Bigg| \; \mathscr{F}_n \right]
=
\Eb_{i_u}\left[\sum_{|w|=1} v_{i_{w}}(q) \mathrm{e}^{-qX_{w}} \right].
\]
Since $v(q)$ is an eigenvector for $m(q)$, this is 
\[
\Eb_i\left[\sum_{|w|=1} v_{i_{uw}}(q) \mathrm{e}^{-qX_{uw}}\;\Bigg| \; \mathscr{F}_n \right]
=
\mathrm{e}^{\lambda(q)} v_{i_u}(q).
\]
Coming back to (\ref{eq:BRW martingale}), we get
\[
\Eb_i\left[M_{n+1} \; | \; \mathscr{F}_n\right] 
=
\sum_{|u|=n} v_{i_u}(q) \mathrm{e}^{-qV(u)-n\lambda(q)}
= M_n.
\] 
\end{proof}

\medskip
\noindent \textbf{The genealogical martingale of self-similar signed growth-fragmentations.}
It is easily seen from the self-similarity of the cell processes and the branching structure of growth-fragmentations that if $\sgn{x}$ denotes the sign function, the process $(-\log |\Xcal_u(0)|, \sgn{\Xcal_u(0)})_{u\in \Ub}$ is a multitype branching random walk, where the set of types is just $I=\{+,-\}$. Define
\[
\mathscr{G}_n := \sigma \left(\Xcal_u, \; |u|\le n \right), \quad n\ge 0.
\]
Note that in this setting, for any $u\in \Ub$ with $|u|=n\ge 1$, $\Xcal_u(0)$ is $\mathscr{G}_{n-1}$--measurable.  For $q\in \R$, $m(q)$ is now a $2\times 2$--matrix with entries
\begin{equation} \label{eq:matrix}
m_{ij}(q) = \Eb_i \bigg[ \sum_{s>0} |\Delta X(s)|^q \mathds{1}_{\sgn{-\Delta X(s)}=j} \bigg].
\end{equation}
We work under the following assumption, analogous to Assumption (A0). 
\medskip
\begin{center} \textbf{Assumption (A):} \emph{The process $X$ admits positive and negative jumps.} \end{center}
\medskip
\noindent Again, \added{for $q$ such that $m(q)$ is finite}, we denote by $e^{\lambda(q)}$ the Perron-Frobenius eigenvalue of $m$ and by $v(q)$ an associated positive eigenvector. We make the following additional assumption. 
\medskip
\begin{center}\textbf{Assumption (B):} \emph{There exists $\omega \in \R$ such that $\lambda(\omega) = 0$.} \label{assumption B} \end{center}
\medskip

\noindent We say that $(v_+,v_-,\omega)$ is \emph{admissible} for $X$ if $m(\omega)$ has Perron-Frobenius eigenvalue $1$ (\textit{i.e.} assumption (B) is satisfied) and $(v_+,v_-)$ is an associated positive eigenvector. This assumption is of \emph{Malthusian} nature: it ensures that some mass is preserved in the growth-fragmentation cell system (see \cref{thm:frag martingale} below). It also in particular implies that there is no local explosion \textit{via} excessiveness, cf. \eqref{eq: excessiveness}. Let us finally mention that Assumption (B) is the analogue of the existence of a root of the cumulant function $\kappa$ in \cite{Ber-GF,BBCK}. We will actually show in section \ref{sec:universality M(n) general} that $\omega$ can be seen as a root of signed variants of $\kappa$. 

Such an assumption is crucial, in both the nonnegative case and the signed one, to obtain martingales for the growth-fragmentation cell system, as Theorem \ref{thm:BRW martingale} then translates to
\begin{Thm} \label{thm:frag martingale}
For $u\in\Ub$, write $v_u:= v_{\sgn{\Xcal_u(0)}}(\omega)$ for simplicity. For any $z\ne 0$, the process
\[
\Mcal (n) := \sum_{|u|=n+1} v_u |\Xcal_u(0)|^{\omega}, \quad n\ge 0,
\]
is a $(\mathscr{G}_n)_{n\ge 0}$--martingale under $\Pcal_z$.
\end{Thm}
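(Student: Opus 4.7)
The plan is to apply Theorem \ref{thm:BRW martingale} to the multitype branching random walk canonically embedded in the genealogy of the cell system. Concretely, I would consider
\[
V(u) := -\log|\Xcal_u(0)|, \qquad i_u := \sgn{\Xcal_u(0)}, \qquad u\in \Ub,
\]
with set of types $I=\{+,-\}$, and argue that $(V(u), i_u)_{u\in\Ub}$ forms a multitype BRW in the sense of Section \ref{sec:genealogical martingale}. The two key inputs are the self-similarity of $X$, which forces the law of offspring displacements from a parent to depend only on the parent's sign, and the inductive construction of the cell system, which provides conditional independence of the offspring subtrees given the parent's state.

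To verify the multitype BRW structure, note that under $\Pb_x$ the process $X$ has the same law as $|x|\cdot Y(|x|^{-\alpha}\cdot)$ for $Y\sim\Pb_{\sgn x}$, so the sequence of opposite jumps under $\Pb_x$ is distributed as $|x|$ times the sequence of opposite jumps under $\Pb_{\sgn x}$, with identical signs. Consequently, the displacements $V(uk)-V(u) = -\log(|\Xcal_{uk}(0)|/|\Xcal_u(0)|)$ and the types $i_{uk}$ depend on $\Xcal_u(0)$ only through $i_u$, with conditional joint law obtained from the ranked opposite jumps of $X$ under $\Pb_{i_u}$. A direct computation then shows that the Perron--Frobenius matrix predicted by Theorem \ref{thm:BRW martingale} for this BRW coincides with (\ref{eq:matrix}):
\[
\Eb_i\bigg[ \sum_{|v|=1} \mathrm{e}^{-qV_v} \mathds{1}_{i_v=j}\bigg] = \Eb_i\bigg[ \sum_{s>0} |\Delta X(s)|^q \mathds{1}_{\sgn{-\Delta X(s)}=j} \bigg] = m_{ij}(q),
\]
so Assumption (A) is exactly the multitype irreducibility Assumption (A0), and Assumption (B) supplies an $\omega$ with $\lambda(\omega)=0$.

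Applying Theorem \ref{thm:BRW martingale} with $q=\omega$ immediately yields that
\[
\widetilde{M}_n := \sum_{|u|=n} v_{i_u}(\omega) \mathrm{e}^{-\omega V(u)} = \sum_{|u|=n} v_u |\Xcal_u(0)|^\omega
\]
is a martingale for the BRW filtration $\mathscr{F}^{\mathrm{BRW}}_n := \sigma((V(u), i_u),\, |u|\le n)$. Setting $\Mcal(n) := \widetilde{M}_{n+1}$ gives exactly the process in the statement, and the shift is natural since the generation $n+1$ initial masses are read off from the jumps of generation $n$ cells and are therefore $\mathscr{G}_n$-measurable.

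The main (though mild) obstacle is the filtration swap from $\mathscr{F}^{\mathrm{BRW}}_{n+1}$ to $\mathscr{G}_n$. One has $\mathscr{F}^{\mathrm{BRW}}_{n+1}\subseteq \mathscr{G}_n$, since the full trajectories of cells of generation $\le n$ determine all of $(\Xcal_u(0))_{|u|\le n+1}$. Conversely, by the inductive construction of the cell system, conditional on $\mathscr{G}_n$ the cells $(\Xcal_u)_{|u|=n+1}$ are independent with respective laws $\Pb_{\Xcal_u(0)}$, so the conditional distribution of $\Mcal(n+1)$ given $\mathscr{G}_n$ is a function of $(\Xcal_u(0))_{|u|=n+1}$ alone. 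The tower property then transfers the BRW martingale identity, giving $\Eb[\Mcal(n+1)\mid\mathscr{G}_n] = \Eb[\widetilde{M}_{n+2}\mid\mathscr{F}^{\mathrm{BRW}}_{n+1}] = \widetilde{M}_{n+1} = \Mcal(n)$. Integrability is automatic since all terms are non-negative and $\Eb\Mcal(0) = v_{\sgn z}(\omega)|z|^\omega < \infty$ by the matrix computation above.
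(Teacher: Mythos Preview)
Your proposal is correct and follows the same approach as the paper, which simply states that Theorem \ref{thm:BRW martingale} ``translates to'' Theorem \ref{thm:frag martingale} after observing that $(-\log|\Xcal_u(0)|,\sgn{\Xcal_u(0)})_{u\in\Ub}$ is a multitype branching random walk. You have filled in the details the paper leaves implicit, in particular the filtration shift from $\mathscr{F}^{\mathrm{BRW}}_{n+1}$ to $\mathscr{G}_n$, which the paper only hints at with the remark that $\Xcal_u(0)$ is $\mathscr{G}_{n-1}$--measurable for $|u|=n\ge 1$.
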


We conclude this paragraph by a very simple but typical calculation leading to a first temporal martingale for $X$.
\begin{Prop} \label{prop:martingale M}
Under $\Pb_z$, the process 
\[
M(s) := v_{\sgn{X(s)}}(\omega) |X(s)|^{\omega} +\sum_{0<r\le s\wedge \zeta} v_{\sgn{-\Delta X(r)}}(\omega) |\Delta X(r)|^{\omega},
\]
is a uniformly integrable martingale for the natural filtration $(F_t^X)_{t\ge 0}$ of $X$, with terminal value $\sum_{r> 0} v_{\sgn{-\Delta X(r)}}(\omega) |\Delta X(r)|^{\omega}$.
\end{Prop}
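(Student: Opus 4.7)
The plan is to identify $M(s)$ as a closed martingale -- specifically, as the conditional expectation of its proposed terminal value. Set
\[
S_\infty := \sum_{r>0} v_{\sgn{-\Delta X(r)}}(\omega)|\Delta X(r)|^{\omega}, \qquad S(s) := \sum_{0 < r \le s\wedge\zeta} v_{\sgn{-\Delta X(r)}}(\omega)|\Delta X(r)|^{\omega},
\]
so that $S(s)$ is $F_s^X$-measurable. The goal is to show $\Eb_z[S_\infty \mid F_s^X] = M(s)$; provided $S_\infty \in L^1(\Pb_z)$, this immediately gives both the martingale property and uniform integrability, and identifies $S_\infty$ as the terminal value.

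The crux of the argument is the ``first-step'' identity
\[
\Eb_y[S_\infty] \,=\, v_{\sgn y}(\omega)\,|y|^{\omega}, \qquad y \ne 0.
\]
I would prove it by viewing $X$ under $\Pb_y$ as the Eve cell $\Xcal_{\varnothing}$ of the cell system of Section~\ref{sec:growth-frag}: the first-generation cells have initial masses $-\Delta X(r)$ indexed by the jump times $r$ of $X$, so $S_\infty = \Mcal(0) = \sum_{|u|=1}v_u|\Xcal_u(0)|^{\omega}$. The one-step computation underlying the proof of Theorem~\ref{thm:BRW martingale}, together with the eigenvector identity $m(\omega)v(\omega)=v(\omega)$ afforded by $\lambda(\omega)=0$, and the scaling factor $|y|^{\omega}$ arising from the root sitting at BRW position $-\log|y|$, gives precisely $\Eb_y[\Mcal(0)] = v_{\sgn y}(\omega)|y|^{\omega}$. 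Specialising to $y=z$ shows in particular $S_\infty \in L^1(\Pb_z)$.

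The remainder is a direct application of the simple Markov property of $X$ at time $s$. Writing
\[
S_\infty \,=\, S(s) \,+\, \sum_{r > s} v_{\sgn{-\Delta X(r)}}(\omega)|\Delta X(r)|^{\omega},
\]
conditioning on $F_s^X$, and recognising the tail sum as the $S_\infty$-quantity of an independent copy of $X$ started from $X(s)$, the first-step identity applied with $y = X(s)$ yields
\[
\Eb_z\!\left[S_\infty - S(s) \,\big|\, F_s^X\right] \,=\, \Eb_{X(s)}\!\left[S_\infty\right] \,=\, v_{\sgn{X(s)}}(\omega)|X(s)|^{\omega}.
\]
Adding the $F_s^X$-measurable term $S(s)$ to both sides gives $\Eb_z[S_\infty \mid F_s^X] = M(s)$, as desired.

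The only genuinely delicate point is the first-step identity itself: one has to verify that the one-step expectation underlying Theorem~\ref{thm:BRW martingale} applies unchanged with the root placed at an arbitrary position rather than at the origin (a mere translation invariance of $\mathrm{e}^{-\omega V(u)}$), and that the sum $S_\infty$ really coincides with $\Mcal(0)$ under the lexicographic ranking of daughter cells fixed in Section~\ref{sec:growth-frag}. Everything else -- the Markov rearrangement of the sum, adaptedness of $M$ to $(F_s^X)_{s\ge 0}$, and the closedness of the resulting martingale -- is routine once this identity is established.
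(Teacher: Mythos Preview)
Your proposal is correct and follows essentially the same route as the paper: both proofs exhibit $M(s)$ as the closed martingale $\Eb_z[S_\infty\mid F_s^X]$ by splitting $S_\infty$ into the $F_s^X$-measurable past sum and a future sum, applying the Markov property (together with self-similarity) to the latter, and then invoking the eigenvector identity $m(\omega)v(\omega)=v(\omega)$ to evaluate the resulting first-step expectation. Your framing via $\Mcal(0)$ is a minor presentational variant of the paper's direct appeal to the definition of $(v_+(\omega),v_-(\omega))$.
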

\begin{proof}
\added{$X$ is clearly adapted to the filtration}. Let us prove that for $s\ge 0$, 
\[
\Eb_i\left[\sum_{r> 0} v_{\sgn{-\Delta X(r)}}(\omega) |\Delta X(r)|^{\omega} \bigg| F_s^X  \right]
=
v_{\sgn{X(s)}}(\omega) |X(s)|^{\omega} +\sum_{0<r\le s\wedge \zeta} v_{\sgn{-\Delta X(r)}}(\omega) |\Delta X(r)|^{\omega}.
\]
Indeed, 
\begin{multline}
\Eb_i\left[\sum_{r> 0} v_{\sgn{-\Delta X(r)}}(\omega) |\Delta X(r)|^{\omega} \bigg| F_s^X  \right]  \\
= 
\Eb_i\left[\sum_{r> s} v_{\sgn{-\Delta X(r)}}(\omega) |\Delta X(r)|^{\omega} \bigg| F_s^X  \right] + \sum_{0\le r\le s\wedge \zeta} v_{\sgn{-\Delta X(r)}}(\omega) |\Delta X(r)|^{\omega}. \label{eq:martingale Z}
\end{multline}
Then by the Markov property at time $s$ and self-similarity of $X$, the first term is
\[
\Eb_i\left[\sum_{r> s} v_{\sgn{-\Delta X(r)}}(\omega) |\Delta X(r)|^{\omega} \bigg| F_s^X  \right]
=
|X(s)|^{\omega} \Eb_{\sgn{X(s)}}\left(\sum_{r>0}v_{\sgn{-\Delta X(r)}}(\omega) |\Delta X(r)|^{\omega}\right).
\]
By definition of $(v_+(\omega), v_-(\omega))$, 
\[
\Eb_{\sgn{X(s)}}\left(\sum_{r>0}v_{\sgn{-\Delta X(r)}}(\omega) |\Delta X(r)|^{\omega}\right)
=
v_{\sgn{X(s)}}(\omega),
\]
and so
\[
\Eb_i\left[\sum_{r> s} v_{\sgn{-\Delta X(r)}}(\omega) |\Delta X(r)|^{\omega} \bigg| F_s^X  \right]
=
v_{\sgn{X(s)}}(\omega)|X(s)|^{\omega}.
\]
Finally, equation \eqref{eq:martingale Z} gives the desired result.
\end{proof}

\begin{Rk}
In particular, this implies that the quantity $f(x)=v_{\sgn{x}}(\omega) |x|^{\omega}$ defines an excessive function for the signed growth-fragmentation $\Xbf$. See \cite{Ber-GF}, Theorem 1, which can be extended in the signed case.
\end{Rk}

\subsection{A change of measure} \label{sec:change measure}
We first define a new probability measure $\Phat_z$ for $z\ne 0$ thanks to the martingale $(\Mcal(n))_{n\ge 0}$ in Theorem \ref{thm:frag martingale}. This new measure is the analogue of the one defined in Section 4.1 in \cite{BBCK}. It describes the law of a new cell system $(\Xcal_u)_{u\in\Ub}$ together with an infinite \emph{ray}, or \emph{leaf}, $\Lcal \in \partial \Ub = \N^{\N}$. \added{On $\mathscr{G}_n$, for $n\ge 0$, it has Radon-Nikodym derivative} with respect to $\Pcal_z$ given by $\Mcal(n)$, normalized so that we get a probability measure, \emph{i.e.} for all $G_n \in \mathscr{G}_n$,
\[
\Phat_z(G_n) := \frac{1}{v_{\sgn{z}}(\omega)|z|^{\omega}} \Ecal_z\left(\Mcal(n) \mathds{1}_{G_n} \right).
\]
Moreover, the law of the particular leaf $\Lcal$ under $\Phat_z$ is determined for all $n\ge 0$ and all $u\in \Ub$ such that $|u|=n+1$ by
\[
\Phat_z \left( \Lcal(n+1)=u \,\big|\, \mathscr{G}_n\right) := \frac{v_{u}|\Xcal_u(0)|^{\omega}}{\Mcal(n)},
\]
where for any $\ell\in\partial\Ub$, $\ell(n)$ denotes the ancestor of $\ell$ at generation $n$. In other words, to define the next generation of the spine, we select one of its jumps proportionally to its size to the power $\omega$ (the spine at generation $0$ being the Eve cell). One can check from the martingale property and the branching structure of the system that these definitions are compatible, and therefore this defines a probability measure by an application of the Kolmogorov extension theorem.

We now introduce the \emph{tagged cell}, that is the size of the cell associated to the leaf $\Lcal$. First, we write $b_{\ell} = \lim\uparrow b_{\ell(n)}$ for any leaf $\ell\in\partial \Ub$. Then, we define $\Xhat$ by $\Xhat(t):=\partial$ if $t\ge b_{\Lcal}$ and 
\[
\Xhat(t):= \Xcal_{\Lcal(n_t)}(t-b_{\Lcal(n_t)}), \quad t<b_{\Lcal},
\]
where $n_t$ is the unique integer $n$ such that $b_{\Lcal(n)}\le t < b_{\Lcal(n+1)}$.  

Observe from the definition of $\Phat_z$ that we have for all nonnegative measurable function $f$ and all $\Gscr_n$--measurable nonnegative random variable $B_n$, 
\[
v_{\sgn{z}}(\omega) |z|^{\omega}\Ehat_z\left(f(\Xcal_{\Lcal(n+1)}(0))B_n\right)
=
\Ecal_z\left( \sum_{|u|=n+1} v_u|\Xcal_u(0)|^{\omega}f(\Xcal_u(0))B_n\right).
\]
This extends to a temporal identity in the following way. Recall that we have enumerated $\Xbf(t)=\left\{\left\{X_i(t), i\ge 1\right\}\right\}, \, t\ge 0$ (this is possible since, according to the remark following Proposition \ref{prop:martingale M}, we know that $\Xbf$ has an excessive function).
\begin{Prop} \label{prop:spine-temporal}
For every $t\ge 0$, every nonnegative measurable function $f$ vanishing at $\partial$, and every $\overline{\Fcal}_t$--measurable nonnegative random variable $B_t$, we have
\[
v_{\sgn{z}}(\omega) |z|^{\omega}\Ehat_z\left(f(\Xhat(t))B_t\right)
=
\Ecal_z\left( \sum_{i\ge 1} v_{\sgn{X_i(t)}}(\omega)|X_i(t)|^{\omega}f(X_i(t))B_t\right).
\]
\end{Prop}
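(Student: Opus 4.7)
The plan is to decompose the left-hand expectation by the generation and identity of the spine at time~$t$, reduce each summand using the discrete spine identity built into the definition of $\Phat_z$, and then simplify via the martingale of Proposition~\ref{prop:martingale M}. Since $\Lcal$ is an infinite ray, for each sample point there is a unique generation $n_t$ with $b_{\Lcal(n_t)}\le t<b_{\Lcal(n_t+1)}$, on which $\Xhat(t)=\Xcal_{\Lcal(n_t)}(t-b_{\Lcal(n_t)})$. Partitioning accordingly and rewriting $\mathbbm{1}_{t<b_{\Lcal(n+1)}}=\sum_{j:\beta^u_j>t-b_u}\mathbbm{1}_{\Lcal(n+1)=uj}$, where $\beta^u_j$ denotes the birth time of the $j$-th child of $u$ relative to $b_u$, yields
\[
\Ehat_z\!\left[f(\Xhat(t))B_t\right] = \sum_{n\ge 0}\sum_{|u|=n}\sum_{j}\Ehat_z\!\left[\mathbbm{1}_{\Lcal(n+1)=uj}\,\mathbbm{1}_{b_u\le t<b_u+\zeta_u,\,\beta^u_j>t-b_u}\,f(\Xcal_u(t-b_u))\,B_t\right].
\]

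For $B_t$ that is moreover $\mathscr{G}_N$-measurable, the Radon--Nikodym relation defining $\Phat_z$ combined with the formula $\Phat_z(\Lcal(n+1)=uj\mid\mathscr{G}_n)=v_{uj}|\Xcal_{uj}(0)|^\omega/\Mcal(n)$ rewrites each inner term as $(v_{\sgn z}|z|^\omega)^{-1}$ times a $\Pcal_z$-expectation carrying the factor $v_{uj}|\Xcal_{uj}(0)|^\omega$ in place of $\mathbbm{1}_{\Lcal(n+1)=uj}$, so that summing over $j$ produces the residual sum $\sum_{j:\beta^u_j>t-b_u}v_{uj}|\Xcal_{uj}(0)|^\omega$. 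I would then apply Proposition~\ref{prop:martingale M} to the cell process $\Xcal_u$: its associated martingale has value $v_{\sgn{\Xcal_u(t-b_u)}}|\Xcal_u(t-b_u)|^\omega+\sum_{j:\beta^u_j\le t-b_u}v_{uj}|\Xcal_{uj}(0)|^\omega$ at time $t-b_u$ and terminal value $\sum_j v_{uj}|\Xcal_{uj}(0)|^\omega$, so the conditional expectation given $\mathcal{F}^{\Xcal_u}_{t-b_u}$ of the residual sum equals $v_{\sgn{\Xcal_u(t-b_u)}}|\Xcal_u(t-b_u)|^\omega$ on $\{b_u+\zeta_u>t\}$. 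Hence the $(n,u)$-summand collapses to $(v_{\sgn z}|z|^\omega)^{-1}\Ecal_z[\mathbbm{1}_{u\text{ alive at }t}\,v_{\sgn{\Xcal_u(t-b_u)}}|\Xcal_u(t-b_u)|^\omega f(\Xcal_u(t-b_u))\,B_t]$, and summing over $n$ and $|u|=n$ --- which parametrises exactly the cells alive at time $t$ --- matches the right-hand side, with $f$ vanishing at $\partial$ absorbing the case of dead cells.

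To remove the $\mathscr{G}_N$-measurability assumption I would approximate a general $\overline{\Fcal}_t$-measurable $B_t$ by $\mathscr{G}_N$-measurable truncations $B_t^{(N)}$ restricted to cells of generation $\le N$ alive at time $t$, and pass to the limit by monotone class, using the excessive function $x\mapsto v_{\sgn x}|x|^\omega$ (cf.\ the remark after Proposition~\ref{prop:martingale M}) for domination. The main obstacle I expect to lie precisely at this interface: the generational filtration $(\mathscr{G}_n)$ that indexes the Radon--Nikodym formulas and the time-slice filtration $\overline{\Fcal}_t$ that governs $B_t$ do not nest, and one must carefully separate the information contained in $\overline{\Fcal}_t$ about children of $u$ already born by time $t$ from the ``future'' part of $\Xcal_u$ on which Proposition~\ref{prop:martingale M} is applied.
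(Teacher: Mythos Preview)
Your decomposition over the spine's generation $n_t$ and your intended use of Proposition~\ref{prop:martingale M} for the residual sum over late children is a natural route, but the Radon--Nikodym step as you state it has a real gap. The identity
\[
\Ehat_z\bigl[\mathbbm{1}_{\Lcal(n+1)=uj}\,Y\bigr] \;=\; (v_{\sgn z}|z|^\omega)^{-1}\,\Ecal_z\bigl[Y\,v_{uj}|\Xcal_{uj}(0)|^\omega\bigr]
\]
follows from the definition of $\Phat_z$ only when $Y$ is $\Gscr_n$--measurable: one conditions on $\Gscr_n$, uses $\Phat_z(\Lcal(n+1)=uj\mid\Gscr_n)=v_{uj}|\Xcal_{uj}(0)|^\omega/\Mcal(n)$, and then invokes the density $\Mcal(n)$ on $\Gscr_n$. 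When $B_t$ is merely $\Gscr_N$--measurable with $N>n$, your integrand is not $\Gscr_n$--measurable, and conditioning on $\Gscr_N$ instead produces the factor $\sum_{|w|=N-n} v_{ujw}|\Xcal_{ujw}(0)|^\omega$ rather than $v_{uj}|\Xcal_{uj}(0)|^\omega$. Collapsing this sum requires a further argument using the branching structure and the genealogical martingale $\Mcal$ in the subtree rooted at $uj$ --- crucially exploiting that on $\{\beta^u_j>t-b_u\}$ the cell $uj$ and its progeny are born after time $t$ and hence do not enter $B_t$. Proposition~\ref{prop:martingale M} alone, which concerns only the single trajectory $\Xcal_u$, does not supply this, and your approximation scheme does not address it either: the difficulty is already present for $\Gscr_N$--measurable $B_t$ at generations $n<N$.

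The paper avoids precisely this issue by a different organisation. It fixes one large generation $n>k$ (where $B_t\in\Gscr_k$) and works with $\Ehat_z[f(\Xhat(t))B_t\,\mathbbm{1}_{b_{\Lcal(n+1)}>t}]$; the Radon--Nikodym conversion then happens uniformly at level $n$, yielding $\sum_{|u|=n+1,\,b_u>t}v_u|\Xcal_u(0)|^\omega$. This sum is decomposed over the ancestor $u'=u(t)$ alive at time $t$, and the conditional expectation given $\overline{\Fcal}_t$ is computed via the temporal branching property (Theorem~\ref{thm:branching}) together with the martingale $\Mcal$ --- not Proposition~\ref{prop:martingale M}. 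Monotone convergence as $n\to\infty$ concludes. Your approach can be repaired, but once the missing branching/martingale step for $n<N$ is supplied, the two arguments essentially converge.
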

\begin{proof}
We reproduce the proof of \cite{BBCK}, though the ideas are the same, because the martingale used in the change of measure is slightly different.

Let $t\ge 0$. We restrict to proving the result for $B_t$ which is $\overline{\Fcal}_t \cap \Gscr_k$--measurable for some $k\in \N$. First, observe that since $f(\partial)=0$, almost surely,
\[
f(\Xhat(t)) B_t\mathds{1}_{b_{\Lcal(n+1)}>t} \underset{n\rightarrow \infty}{\longrightarrow} f(\Xhat(t)) B_t. 
\]
Therefore, by monotone convergence, 
\[
\Ehat_z(f(\Xhat(t)) B_t) = \underset{n\rightarrow \infty}{\lim} \Ehat_z \left(f(\Xhat(t)) B_t\mathds{1}_{b_{\Lcal(n+1)}>t}\right). 
\]
Now if we condition on $\Gscr_n$ and decompose $\Lcal(n+1)$ over the cells at generation $n+1$, provided $n>k$ so that $B_t$ is $\Gscr_n$--measurable, we get
\[
\Ehat_z \left(f(\Xhat(t)) B_t\mathds{1}_{b_{\Lcal(n+1)}>t}\right)
=
\frac{1}{v_{\sgn{z}}(\omega)|z|^{\omega}} \Ecal_z \left(\sum_{|u|=n+1} v_{u} |\Xcal_u(0)|^{\omega} \mathds{1}_{b_u>t} f(\Xcal_{u(t)}(t-b_{u(t)}))B_t\right).
\]
Here we wrote $u(t)$ for the most recent ancestor of $u$ at time $t$. We now decompose the sum over the ancestor $u(t)$ at time $t$. This gives 
\begin{multline} \label{eq:decompose u(t)}
\Ecal_z \left(\sum_{|u|=n+1} v_{u} |\Xcal_u(0)|^{\omega} \mathds{1}_{b_u>t} f(\Xcal_{u(t)}(t-b_{u(t)}))B_t\right) \\
=
\Ecal_z \left(\sum_{|u'|\le n} \sum_{|u|=n+1} v_{u} |\Xcal_u(0)|^{\omega} \mathds{1}_{b_u>t} f(\Xcal_{u'}(t-b_{u'}))B_t \mathds{1}_{u(t)=u'}\right),    
\end{multline}
and by conditioning on $\overline{\Fcal}_t$ and applying the temporal branching property stated in Theorem \ref{thm:branching}, 
\begin{align*}
&\Ecal_z \left(\sum_{|u|=n+1} v_{u} |\Xcal_u(0)|^{\omega} \mathds{1}_{b_u>t} f(\Xcal_{u(t)}(t-b_{u(t)}))B_t\right) \\
&= \Ecal_z \left(\sum_{|u'|\le n} f(\Xcal_{u'}(t-b_{u'}))B_t \, \Ecal_z\left[\sum_{|u|=n+1} v_{u} |\Xcal_u(0)|^{\omega} \mathds{1}_{b_u>t} \mathds{1}_{u(t)=u'} \, \bigg| \, \overline{\Fcal}_t \right]\right)\\
&= \Ecal_z \left(\sum_{|u'|\le n} f(\Xcal_{u'}(t-b_{u'}))B_t \, \Ecal_{\Xcal_{u'}(t-b_{u'})}\left[\sum_{|u|=n+1-|u'|} v_{u'u} |\Xcal_{u'u}(0)|^{\omega} \right]\mathds{1}_{b_{u'}\le t<b_{u'}+\zeta_{u'}}\right)\\
&= \Ecal_z \left(\sum_{|u'|\le n} f(\Xcal_{u'}(t-b_{u'}))B_t \, \mathds{1}_{b_{u'}\le t<b_{u'}+\zeta_{u'}} v_{\sgn{\Xcal_{u'}(t-b_{u'})}}(\omega) |\Xcal_{u'}(t-b_{u'})|^{\omega}\right).
\end{align*}
Finally, taking $n\rightarrow\infty$ and using monotone convergence, we obtain the desired result.
\end{proof}

\begin{Cor} \label{cor:supermartingale}
The process 
\[
\Mcal_t := \sum_{i=1}^{\infty} v_{\sgn{X_i(t)}}(\omega)|X_i(t)|^{\omega}, \quad t\ge 0,
\]
is a supermartingale with respect to $(\Fcal_t, t\ge 0)$. 
\end{Cor}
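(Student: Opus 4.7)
The plan is to read off the supermartingale property directly from Proposition \ref{prop:spine-temporal} combined with the temporal branching property of Theorem \ref{thm:branching}. Adaptedness and nonnegativity of $(\Mcal_t)_{t\ge 0}$ with respect to $(\Fcal_t)_{t\ge 0}$ are immediate from the definition of $\Mcal_t$ as a measurable function of $\Xbf(t)$, so the whole content of the statement is the inequality $\Ecal_z(\Mcal_t \mid \Fcal_s) \le \Mcal_s$ together with integrability.

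First, I would get an integrability bound and the $s=0$ case of the supermartingale property in one shot, by applying Proposition \ref{prop:spine-temporal} with $B_t = 1$ and $f$ equal to the indicator of $\R^*$ extended by $0$ at the cemetery $\partial$. This yields
\[
\Ecal_z(\Mcal_t) = v_{\sgn{z}}(\omega) |z|^{\omega} \, \Phat_z(\Xhat(t) \ne \partial) \le v_{\sgn{z}}(\omega) |z|^{\omega} = \Mcal_0,
\]
which already expresses the fact that the tagged cell may be killed before time $t$ as a loss of mass on the left-hand side.

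To promote this to arbitrary $0 \le s \le t$, I would condition on $\overline{\Fcal}_s$ and apply the branching property at time $s$. Theorem \ref{thm:branching} says that conditionally on $\overline{\Xbf}(s) = \{\{(X_i(s), n_i), i \ge 1\}\}$, the shifted process $(\overline{\Xbf}(s+r))_{r \ge 0}$ is distributed as a disjoint union of independent copies of $\overline{\Xbf}$ started from the $X_i(s)$'s. Since $\Mcal_t$ only depends on the sizes and not on the generations, the shifts $\theta_{n_i}$ are harmless, and one obtains
\[
\Ecal_z(\Mcal_t \mid \overline{\Fcal}_s) = \sum_{i \ge 1} \Ecal_{X_i(s)}(\Mcal_{t-s}) \le \sum_{i \ge 1} v_{\sgn{X_i(s)}}(\omega) |X_i(s)|^{\omega} = \Mcal_s,
\]
using the time-$0$ bound above applied to each individual cell. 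Since $(\Mcal_t)$ is $(\Fcal_t)$-adapted and $\Fcal_s \subset \overline{\Fcal}_s$, conditioning once more yields $\Ecal_z(\Mcal_t \mid \Fcal_s) \le \Mcal_s$.

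I do not anticipate a real obstacle here: the only technical points are the term-by-term conditional expectation in the infinite sum (justified by Tonelli, since all summands are nonnegative) and the observation that the generation shifts in the branching property disappear because $\Mcal_t$ is a function of sizes alone. The proof is essentially a one-line consequence of Proposition \ref{prop:spine-temporal} once the $s=0$ case is noted; the supermartingale defect $\Mcal_s - \Ecal_z(\Mcal_t \mid \Fcal_s)$ is precisely the mass lost through absorption of the tagged spine between times $s$ and $t$.
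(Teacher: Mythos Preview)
Your proposal is correct and follows exactly the same route as the paper's own proof: apply Proposition~\ref{prop:spine-temporal} with $f=\mathds{1}_{x\neq\partial}$ to get $\Ecal_z(\Mcal_t)\le v_{\sgn{z}}(\omega)|z|^{\omega}$, then invoke the branching property (Theorem~\ref{thm:branching}) to propagate this to arbitrary $s\le t$. The only difference is that you have spelled out the details (Tonelli, the irrelevance of the generation shifts, and the tower property from $\overline{\Fcal}_s$ down to $\Fcal_s$) that the paper leaves implicit in the phrase ``follows readily from the branching property.''
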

\begin{proof}
Proposition \ref{prop:spine-temporal} with $f:=\mathds{1}_{x\neq \partial}$ gives that $\Ecal_z(\Mcal_t) \le v_{\sgn{z}}(\omega) |z|^{\omega}$ and the supermartingale property follows readily from the branching property.
\end{proof}

%
%
\section{Universality of $\Mcal(n)$ and the signed cumulant functions}
\label{sec:universality}
The construction in section \ref{sec:genealogical martingale} produces martingales $\Mcal(n)$ depending on $X$. We now aim at proving that actually, these do not depend on the choice of the Eve process, in the sense that any admissible triplet $(v_+,v_-,\omega)$ for $X$ will also lead to a martingale for any other cell process driving the same growth-fragmentation process. The strategy is as follows. First, we prove universality for all constant sign driving cell processes by defining \emph{signed cumulant functions} which characterise the couple $(v_+/v_-,\omega)$. Then, starting from a possibly signed Eve process $X$, we flip it every time its sign changes and reduce to the previous case. Along the way, we extend the definition of signed cumulant functions to signed processes. Using the constant sign case, this will provide universality for all cell processes driving the same growth-fragmentation.

\subsection{Signed cumulants and universality in the constant sign case} \label{sec:constant sign}
We introduce two key players in the study of self-similar signed growth-fragmentation processes. We focus on the case when $X$ \emph{has no sign change}: in this case, particles born with a positive mass will continue to have a positive mass until they die, and those with negative mass will remain negative. Then the law of $X$ under $\Pb_z$ is determined by its self-similarity index $\alpha$, and the Laplace exponents $\psi_+$ and $\psi_-$ of the Lamperti exponents $\xi_+$ and $\xi_-$ underlying $X$, depending on the sign of $z$. It is convenient to consider 
\[
F(q)
:=
\begin{pmatrix}
\psi_+(q) & 0 \\
0 & \psi_-(q)
\end{pmatrix}, \quad q\ge 0,
\]
as the matrix exponent of $X$ (this is the analog of \eqref{eq: F matrix} in the simple case when there is no sign change). In the constant sign case, because the Lamperti representation holds, it is easy to compute $\lambda(q)$ and to define \emph{signed cumulant functions} which are analogs of the cumulants in \cite{BBCK}. 

Indeed, let us compute $\Eb_+\left[ \sum_{0<r< \zeta} v_{\sgn{-\Delta X(r)}}(q) |\Delta X(r)|^{q} \right]$, for $q\ge 0$. We will assume that 
\added{$m(q)$ is finite all the way, so that in particular $(v_+(q),v_-(q))$ is well-defined.} Let $\Lambda_+$ denote the Lévy measure of $\xi_+$. Since we are summing over all times, we can omit the Lamperti time-change. In addition, note that, because $X$ is positive, the sign of any jump of $X$ is the same as the one of the corresponding jump of $\xi_+$, so that the previous expectation boils down to 
\[
\Eb_+\left[ \sum_{0<r< \zeta} v_{\sgn{-\Delta X(r)}}(q) |\Delta X(r)|^{q} \right]
=
\Eb_+\left[\sum_{r>0} v_{\sgn{-\Delta \xi_+(r)}}(q)\left|\mathrm{e}^{\xi_+(r)}-\mathrm{e}^{\xi_+(r^-)}\right|^q \right].
\]
From there, we can use the compensation formula for Lévy processes, \emph{i.e.}
\begin{align*}
\Eb_+\left[ \sum_{0<r< \zeta} v_{\sgn{-\Delta X(r)}}(q) |\Delta X(r)|^{q} \right]
&=
\Eb_+\left[ \sum_{r>0} v_{\sgn{-\Delta \xi_+(r)}}(q)\mathrm{e}^{q\xi_+(r^-)}\left|\mathrm{e}^{\Delta\xi_+(r)}-1\right|^q\right] \\
&= \int_0^{\infty} \mathrm{d}r \Eb_+[\mathrm{e}^{q\xi_+(r)}] \int_{\R} \Lambda_+(\mathrm{d}x) v_{-\sgn{x}}(q) \left|\mathrm{e}^{x}-1\right|^q.
\end{align*}
\added{Since we assumed that $m(q)$ was finite, we get that $\psi_+(q)<0$ and $\int_{\R} \Lambda_+(\mathrm{d}x) \left|\mathrm{e}^{x}-1\right|^q<\infty$}. We obtain
\[
\Eb_+\left[ \sum_{0<r< \zeta} \frac{v_{\sgn{-\Delta X(r)}}(q)}{v_+(q)} |\Delta X(r)|^{q} \right]
=
-\frac{1}{\psi_+(q)} \int_{\R}\Lambda_+(\mathrm{d}x) \frac{v_{-\sgn{x}}(q)}{v_+(q)} \left|\mathrm{e}^{x}-1\right|^q.
\]
Therefore, if we set 
\[
\Kcal_+(q) = \psi_+(q) + \int_{\R}\Lambda_+(\mathrm{d}x) \frac{v_{-\sgn{x}}(q)}{v_+(q)} \left|\mathrm{e}^{x}-1\right|^q,
\]
we see that
\begin{equation} \label{eq:sum positive cumulant}
\Eb_+\left[ \sum_{0<r< \zeta} \frac{v_{\sgn{-\Delta X(r)}}(q)}{v_+(q)} |\Delta X(r)|^{q} \right]
=
\begin{cases}
1-\frac{\Kcal_+(q)}{\psi_+(q)} & \text{if} \; \psi_+(q)<0, \\
+\infty & \text{otherwise}.
\end{cases}
\end{equation}
Equation \eqref{eq:sum positive cumulant} is reminiscent of Lemma 3 in \cite{Ber-GF}. 

Likewise, under symmetrical assumptions on $q$ (or applying the previous calculations to $-X$),
\begin{equation} \label{eq:sum negative cumulant}
\Eb_-\left[ \sum_{0<r< \zeta} \frac{v_{\sgn{-\Delta X(r)}}(q)}{v_-(q)} |\Delta X(r)|^{q} \right]
=
\begin{cases}
1-\frac{\Kcal_-(q)}{\psi_-(q)} & \text{if} \; \psi_-(q)<0, \\
+\infty & \text{otherwise},
\end{cases}
\end{equation}
where, with obvious notations, 
\[
\Kcal_-(q) = \psi_-(q) + \int_{\R}\Lambda_-(\mathrm{d}x) \frac{v_{\sgn{x}}(q)}{v_-(q)} \left|\mathrm{e}^{x}-1\right|^q.
\]
Then Assumption (B) translates to $\Kcal_+(\omega) = \Kcal_-(\omega) =0$ (owing to the fact that, by Perron-Frobenius theory, the leading eigenvalue is the only one associated with a positive eigenvector). Therefore the roots of $(\Kcal_+,\Kcal_-)$ give rise to martingales, as explained in Theorem \ref{thm:frag martingale}. We call $\Kcal_+$ and $\Kcal_-$ the \emph{signed cumulant functions}. We will also use the term \emph{cumulant functions} to refer to the one defined in \cite{BBCK}. More precisely, let $\Xbf^+$ (\emph{resp.} $\Xbf^-$) be the growth-fragmentation process obtained from $\Xbf$ by killing all the cells with negative mass (\emph{resp.} positive mass) together with their progeny, under $\Pbf_+$ (\emph{resp.} $\Pbf_-$). We define $\kappa_+$ and $\kappa_-$ as the cumulant functions, in the sense of \cite{BBCK}, of $\Xbf
^+$ and $\Xbf^-$ respectively. Recall that they are given by
\[
\kappa_+(q) := \psi_+(q) + \int_{-\infty}^0 \Lambda_+(\mathrm{d}x) |\mathrm{e}^x
-1|^q, \quad q\ge 0,
\]
and 
\[
\kappa_-(q) := \psi_-(q) + \int_{-\infty}^0 \Lambda_-(\mathrm{d}x) |\mathrm{e}^x
-1|^q, \quad q\ge 0,
\]
so that for instance
\[
\Kcal_+(q) = \kappa_+(q) + \frac{v_-(q)}{v_+(q)}\int_0^{\infty} \Lambda_+(\mathrm{d}x) |\mathrm{e}^x-1|^q, \quad q\ge 0.
\]
Moreover, it is well-known that $\kappa_+$ and $\kappa_-$ are invariants of $\Xbf^+$ and $\Xbf^-$, and characterise them respectively (see \cite{QShi} for more details). We now prove the universality of $(\Mcal(n))_{n\ge 0}$ in the constant sign case. Let $X$ and $X'$ be two driving Markov processes with constant sign defining the growth-fragmentation processes $\Xbf$ and $\Xbf'$ respectively. We write $m(q)$ and $m'(q)$ for the corresponding matrices introduced in \eqref{eq:matrix}. Recall that $(v_+,v_-,\omega)$ is said \emph{admissible} for $X$ if $m(\omega)$ has Perron-Frobenius eigenvalue $1$ and $(v_+,v_-)$ is an associated positive eigenvector, so that the triplet $(v_+,v_-,\omega)$ defines a martingale as explained in section \ref{sec:genealogical martingale}. 
\begin{Prop} \label{prop:admissible constant sign}
Suppose that $\Xbf \overset{\Lcal}{=} \Xbf'$. If $(v_+,v_-,\omega)$ is admissible for $X$, then it is also admissible for $X'$.
\end{Prop}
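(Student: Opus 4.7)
The plan is to recast admissibility of $(v_+,v_-,\omega)$ for $X$ as the vanishing of the signed cumulants $\Kcal_+^X$ and $\Kcal_-^X$ at $\omega$, and then to argue that these cumulants are intrinsic invariants of the law of $\Xbf$.

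First I would observe that, writing the Perron--Frobenius eigenvector equation $m(\omega)(v_+,v_-)^\top=(v_+,v_-)^\top$ row by row and dividing the two rows by $v_+(\omega)$ and $v_-(\omega)$ respectively, one gets
\[
\Eb_\pm\bigg[\sum_{0<r<\zeta}\tfrac{v_{\sgn{-\Delta X(r)}}(\omega)}{v_\pm(\omega)}\,|\Delta X(r)|^\omega\bigg]=1,
\]
which by \eqref{eq:sum positive cumulant}--\eqref{eq:sum negative cumulant} is exactly $\Kcal_+^X(\omega)=\Kcal_-^X(\omega)=0$ (in the admissible range). The proposition therefore reduces to showing that $\Kcal_\pm^X$, computed with the fixed ratios $v_-(\omega)/v_+(\omega)$ and $v_+(\omega)/v_-(\omega)$, depends only on the law of $\Xbf$.

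For this I would use the decomposition $\Kcal_+^X(q)=\kappa_+^X(q)+\tfrac{v_-(q)}{v_+(q)}\,J_+^X(q)$ with $J_+^X(q):=\int_0^\infty \Lambda_+(\mathrm{d}x)\,|\mathrm{e}^x-1|^q$, together with its symmetric counterpart for $\Kcal_-^X$. The first summand is the cumulant of the positive-only growth-fragmentation $\Xbf^+$ in the sense of \cite{BBCK}, and by the classical characterisation recalled just before the statement (see \cite{QShi}) it is an intrinsic invariant of the law of $\Xbf^+$; since $\Xbf^+$ is the measurable functional of $\Xbf$ obtained by killing negative cells together with their descendants, $\Xbf\overset{\Lcal}{=}\Xbf'$ forces $\kappa_+^X=\kappa_+^{X'}$, and symmetrically $\kappa_-^X=\kappa_-^{X'}$. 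Moreover, the negative part of the L\'evy measure $\Lambda_+$ is read off the intensity of positive-daughter births visible in $\Xbf^+$, so together with the identity $\psi_+=\kappa_+-\int_{-\infty}^0\Lambda_+(\mathrm{d}x)|\mathrm{e}^x-1|^q$ this also yields $\psi_+^X=\psi_+^{X'}$, and similarly $\psi_-^X=\psi_-^{X'}$.

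It remains to identify the ``sign-changing'' integrals $J_\pm^X$ as functionals of $\Xbf$. Under $\Pbf_+^X$ and in the constant-sign regime, every negative cell of $\Xbf$ descends from a positive ancestor and keeps its sign forever, so the point process of negative-cell births along positive cell lineages is a measurable functional of $\Xbf$. A compensation formula applied to the positive jumps of $\xi_+$ only, in the spirit of the derivation of \eqref{eq:sum positive cumulant}, identifies the expected total $q$-mass of first-generation negative daughters of the Eve cell under $\Pbf_+^X$ with $-J_+^X(q)/\psi_+^X(q)$; combined with the invariance of $\psi_+^X$ established above, this recovers $J_+^X$ from $\Xbf$, and symmetrically $J_-^X$. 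Plugging everything back gives $\Kcal_\pm^X(\omega)=\Kcal_\pm^{X'}(\omega)$, whence the admissibility of $(v_+,v_-,\omega)$ transfers from $X$ to $X'$. The main obstacle I expect is precisely this last identification: while it is intuitively clear that the intensity of first-generation negative daughters can be extracted from $\Xbf$ (the very first negative cell to appear under $\Pbf_+^X$ is necessarily a first-generation daughter of Eve), making this rigorous for all of $J_+^X(q)$ likely requires passing through the spinal decomposition of Section \ref{sec:change measure} or a tagged-cell argument in the spirit of \cite{BBCK,QShi}.
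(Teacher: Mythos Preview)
Your overall plan---reduce admissibility to $\Kcal_\pm^X(\omega)=0$ and then argue that the ingredients $\kappa_\pm$ and $J_\pm^X(q)=\int_0^\infty\Lambda_\pm(\mathrm{d}x)\,|\mathrm{e}^x-1|^q$ depend only on the law of $\Xbf$---is exactly the paper's plan. The divergence, and the error, is in how you identify $J_+^X$.

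Your claim that ``the negative part of the L\'evy measure $\Lambda_+$ is read off the intensity of positive-daughter births visible in $\Xbf^+$'' and hence that $\psi_+^X=\psi_+^{X'}$ is \emph{false}. This is precisely what the bifurcator phenomenon in \cite{QShi} rules out: two driving processes $X,X'$ can yield the same growth-fragmentation while having different $\psi_+$ and different $\Lambda_{+|(-\infty,0)}$; only their combination $\kappa_+$ is invariant. For the same reason, your fallback argument via ``first-generation negative daughters of the Eve cell'' cannot work either: the first-generation structure is a feature of the chosen driving process, not of $\Xbf$. You correctly sense this difficulty at the end, but the spinal machinery you invoke would be circular here, since the spine in Section~\ref{sec:spinal decomposition} is built \emph{from} an admissible triplet.

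The paper's resolution is much shorter and avoids all of this. Since $\kappa_+$ is (up to an additive constant) the Laplace exponent of a L\'evy process, its L\'evy measure is uniquely determined by the L\'evy--Khintchine formula. But in passing from $\psi_+$ to $\kappa_+=\psi_++\int_{-\infty}^0\Lambda_+(\mathrm{d}x)|\mathrm{e}^x-1|^q$, the added term contributes only jumps in $(-\infty,0)$ (the pushforward of $\Lambda_{+|(-\infty,0)}$ by $x\mapsto\log(1-\mathrm{e}^x)$), so the restriction of the L\'evy measure of $\kappa_+$ to $(0,\infty)$ is exactly $\Lambda_{+|(0,\infty)}$. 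Hence $\kappa_+^X=\kappa_+^{X'}$ forces $\Lambda_{+|(0,\infty)}=\Lambda'_{+|(0,\infty)}$, and therefore $J_+^X=J_+^{X'}$, with no need to recover $\psi_+$ separately.
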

\begin{proof}
By definition of $(\Kcal_+,\Kcal_-)$, the triplet $(v_+,v_-,\omega)$ is admissible for $X$ if, and only if, $\Kcal_+(\omega) = \Kcal_-(\omega) =0$. This, in turn, is equivalent to the system
\[
\begin{cases}
    \frac{v_-}{v_+} = -\frac{\kappa_+(\omega)}{\displaystyle{\int_0^{\infty} \Lambda_+(\mathrm{d}x) |\mathrm{e}^x-1|^{\omega}}},  \\
     0 = \kappa_-(\omega)\kappa_+(\omega) - \displaystyle{\int_0^{\infty} \Lambda_+(\mathrm{d}x) |\mathrm{e}^x-1|^{\omega}} \cdot \displaystyle{\int_{0}^{\infty} \Lambda_-(\mathrm{d}x) |\mathrm{e}^x-1|^{\omega}}. 
\end{cases}
\]
Note that all the fractions are well defined because $(v_+,v_-)$ is a positive eigenvector. Since $\kappa_+$ and $\kappa_-$ only depend on the growth-fragmentation $\Xbf$ induced by $X$, the result follows if we show that $\Lambda_{+ \, \big|(0,+\infty)}$ and $\Lambda_{- \, \big|(0,+\infty)}$ also depend only on $\Xbf$. In fact, since $\Xbf$ and $\Xbf'$ share the same positive cumulant, for instance, we have for $q\ge 0$, with obvious notations
\[
\kappa_+(q) 
= \psi_+(q) + \int_{-\infty}^0 \Lambda_+(\mathrm{d}x) |\mathrm{e}^x
-1|^q
= \psi'_+(q) + \int_{-\infty}^0 \Lambda'_+(\mathrm{d}x) |\mathrm{e}^x
-1|^q.
\]
\added{Up to translation, $\kappa_+$ is a Laplace exponent, and therefore} uniqueness in the Lévy-Khintchine formula triggers that $\Lambda_{+ \, \big|(0,+\infty)}=\Lambda'_{+ \, \big|(0,+\infty)}$. Similarly, using invariance of $\kappa_-$, one has $\Lambda_{- \, \big|(0,+\infty)}=\Lambda'_{- \, \big|(0,+\infty)}$, and this concludes the proof of Proposition \ref{prop:admissible constant sign}.
\end{proof}
\begin{Cor}
Suppose that $\Xbf \overset{\Lcal}{=} \Xbf'$. If $(v_+,v_-,\omega)$ is admissible for $X$, then under $\Pb_z$, the process 
\[
M'(s) := v_{\sgn{X'(s)}}(\omega) |X'(s)|^{\omega} +\sum_{0<r\le s\wedge \zeta} v_{\sgn{-\Delta X'(r)}}(\omega) |\Delta X'(r)|^{\omega},
\]
is a uniformly integrable martingale for the natural filtration $(F_t^{X'})_{t\ge 0}$ of $X'$, with terminal value $\sum_{r> 0} \frac{v_{\sgn{-\Delta X'(r)}}(\omega)}{v_i(\omega)} |\Delta X'(r)|^{\omega}$.
\end{Cor}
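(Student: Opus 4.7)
The plan is to observe that this corollary is an immediate consequence of Proposition \ref{prop:admissible constant sign} combined with Proposition \ref{prop:martingale M}. Indeed, since the admissibility of $(v_+,v_-,\omega)$ is preserved when passing from $X$ to any constant-sign driving process $X'$ inducing the same growth-fragmentation, we know that $(v_+,v_-,\omega)$ is admissible for $X'$. In particular, $m'(\omega)$ has Perron-Frobenius eigenvalue $1$ with associated positive eigenvector $(v_+(\omega),v_-(\omega))$, which is exactly Assumption (B) rephrased for $X'$.

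Once this is in hand, the proof of Proposition \ref{prop:martingale M} applies verbatim with $X$ replaced by $X'$: the only ingredients of that argument are the Markov property at time $s$, the self-similarity of $X'$ (which is inherited since $X'$ is assumed to be a self-similar Markov process), and the defining identity $\Eb_{\pm}\bigl[\sum_{r>0} v_{\sgn{-\Delta X'(r)}}(\omega)|\Delta X'(r)|^{\omega}\bigr] = v_{\pm}(\omega)$. This last identity is precisely what $(v_+(\omega),v_-(\omega))$ being an eigenvector of $m'(\omega)$ with eigenvalue $1$ states, and it is available thanks to the previous step.

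Therefore the plan reduces to two lines: invoke Proposition \ref{prop:admissible constant sign} to transfer admissibility, and then invoke Proposition \ref{prop:martingale M} applied to $X'$. There is no real obstacle here; the substance of the corollary lies entirely in Proposition \ref{prop:admissible constant sign}, whose content is that the signed cumulant functions $\Kcal_{\pm}$ (and hence the triplets $(v_+,v_-,\omega)$ solving $\Kcal_+(\omega)=\Kcal_-(\omega)=0$) are intrinsic invariants of the signed growth-fragmentation $\Xbf$. The uniform integrability and the identification of the terminal value follow from the same computation as in Proposition \ref{prop:martingale M}, using that the sum $\sum_{r>0} v_{\sgn{-\Delta X'(r)}}(\omega)|\Delta X'(r)|^{\omega}$ has finite expectation under $\Pb_{\pm}$ by the eigenvector relation.
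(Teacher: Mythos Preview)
Your proposal is correct and matches the paper's intended argument: the corollary is stated immediately after Proposition~\ref{prop:admissible constant sign} without proof, precisely because it follows by combining that proposition (to transfer admissibility to $X'$) with Proposition~\ref{prop:martingale M} (applied verbatim to $X'$). Your identification of the key ingredients---Markov property, self-similarity, and the eigenvector identity---is exactly right.
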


\subsection{Universality of $\Mcal(n)$ in the general case} \label{sec:universality M(n) general}
We now move from the constant sign case to the general case. To this end, we construct from any Eve cell process $X$ a constant sign process $X^{\uparrow}$ driving the same growth-fragmentation. We then prove that the triplets $(v_+,v_-,\omega)$ are simultaneously admissible for $X$ and $X^{\uparrow}$.
\\ \\ \noindent \textbf{Constructing a constant sign driving process from a signed Eve cell.} \label{par:constant sign}
To study signed growth-fragmentation, it is reasonable to reduce to the constant sign case in \cite{BBCK}. One natural choice for this, starting from a signed Eve process $X$ with positive mass at time $0$, is to follow it until it jumps to the negatives, and then select the particle this jump creates (which has positive mass, since the jump is negative). If we continue by induction, this constructs some process $X
^{\uparrow}$ which, under $\Pb_z$ for $z>0$, remains nonnegative at all times. Note that the jump times that we select (from positive to negative) could have an accumulation point. If this happens, then by \cite[Proposition 3]{CPR}, this accumulation point is the first time that $X^{\uparrow}$ hits $0$, in which case we decide that $X^{\uparrow}$ is absorbed at $0$. Likewise, we can construct $X^{\uparrow}$ starting from a negative mass. A first observation is that the branching structure, Markov property and self-similarity of $X$ ensure that $X^{\uparrow}$ is a self-similar Markov process under $\Pb_z$ for all $z\ne 0$. Moreover, it is plain that $X^{\uparrow}$ carries the same growth-fragmentation as $X$ itself. In short, $X$ and $X
^{\uparrow}$ are two driving cell processes for the same growth-fragmentation. If we manage to make explicit the law of $X^{\uparrow}$ (or rather, its Lamperti exponent) in terms of $X$ (or its Lamperti-Kiu characteristics), then we are in good shape to reduce the study to constant sign driving cell processes. Let us focus on the case when $X$ starts from a positive mass $z>0$, say, and recall the notation $\xi_+$, $q_+$ and $U_+$ of section \ref{sec:MAP}. Then the independence and stationarity of increments of the Lévy process $\xi_+$ imply that,  up to Lamperti-Kiu time change, going from $X$ to $X^{\uparrow}$ amounts to adding jumps to $\xi_+$ at times $\zeta_+$, which are exponential clocks with parameter $q_+$. Call $H_1$ the first time when $X$ crosses $0$. Then the intensity $\delta$ of these jumps is exactly what it takes, at the exponential level, to go from $X(H_1^-)$ to $X^{\uparrow}(H_1)$, \emph{i.e.} $\delta = \log \left(\frac{-\Delta X(H_1)}{X(H_1^-)} \right) = \log(1+\mathrm{e}^{U_+})$. Therefore, the Lamperti exponent $\xi^{\uparrow}_+$ of $X^{\uparrow}$ started from $z>0$ results in the superposition of $\xi_+$ and an independent compound Poisson process with rate $q_+$ and jumps distributed as the image $\widetilde{\Lambda}_{U_+}(\mathrm{d}x)$ of the law $\Lambda_{U_+}(\mathrm{d}x)$ of $U_+$, by the mapping $x\mapsto\log(1+\mathrm{e}^x)$. Hence its Laplace exponent is 
\begin{equation} \label{eq:psi dagger +}
\psi^{\uparrow}_+(q) = \psi_+(q) + q_+\left(\int_{\R} (1+\mathrm{e}^x)^q \Lambda_{U_+}(\mathrm{d}x) - 1\right), \quad q\ge 0,
\end{equation}
and, in particular, its Lévy measure is 
\begin{equation} \label{eq:Lévy dagger +}
\Lambda^{\uparrow}_+(\mathrm{d}x) := \Lambda_+(\mathrm{d}x) + q_+\widetilde{\Lambda}_{U_+}(\mathrm{d}x).
\end{equation}
Note that these expressions only depend on the positive characteristics of $X$, and this is coherent with the construction of $X^{\uparrow}$. The same calculations can be carried out in the case when $z<0$, and finally, we obtain
\begin{equation} \label{eq:psi dagger -}
\psi^{\uparrow}_-(q) = \psi_-(q) + q_- \left(\int_{\R} (1+\mathrm{e}^x)^q \Lambda_{U_-}(\mathrm{d}x) - 1\right), \quad q\ge 0.
\end{equation}
See Figure \ref{fig:X dagger} for a drawing of $X^{\uparrow}$.
\begin{figure}[h] 
\begin{center}
\includegraphics[scale=0.75]{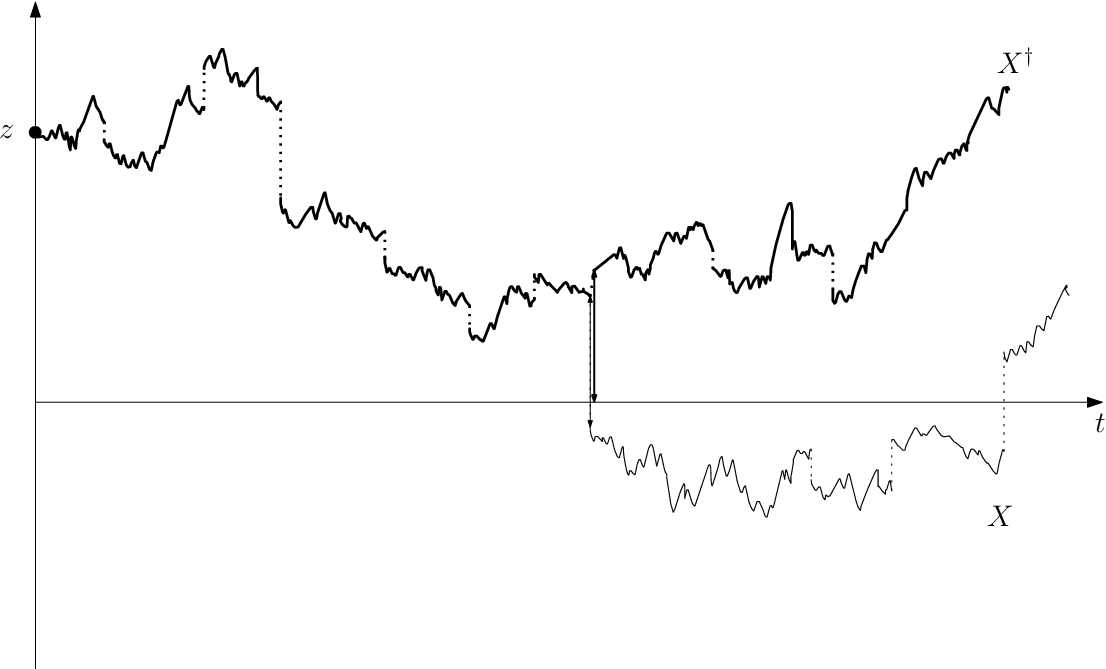}
\end{center}
\caption{Constructing a positive Eve cell process from $X$. The process $X^{\uparrow}$ (in bold) is constructed from $X$ by selecting the complementary positive cell created when $X$ jumps below $0$, and then by induction.}
\label{fig:X dagger}
\end{figure}

\noindent \textbf{Universality of $\Mcal(n)$ and the signed cumulant functions.}
We want to extend the result of Proposition \ref{prop:admissible constant sign} to general signed driving processes. To do this, we resort to $X^{\uparrow}$ and link admissible triplets $(v_+,v_-,\omega)$ for $X$ and $X^{\uparrow}$. First, we state a technical lemma, that is probably superfluous, but simplifies calculations.
\begin{Lem} \label{lem:admissibility}
The following points are equivalent:
\begin{itemize}
    \item[(i)] $(v_+,v_-,\omega)$ is admissible for $X$.
    \item[(ii)] The process $M$ defined in Proposition \ref{prop:martingale M} with parameters $(v_+,v_-,\omega)$ is a uniformly integrable martingale.
    \item[(iii)] Let $H_1$ be the first time $X$ crosses $0$. Then,
    \[
    \Eb_{\pm}\left[ M(H_1)\right] = v_{\pm}.
    \]
\end{itemize}
\end{Lem}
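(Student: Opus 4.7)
The plan is to establish the chain $(i) \Rightarrow (ii) \Rightarrow (iii) \Rightarrow (i)$. Observe first that under $\Pb_\pm$ one has $M(0) = v_\pm$ since $|X(0)| = 1$ and no jump has yet occurred, so all three statements agree at time zero on the value $v_\pm$.

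The implication $(i) \Rightarrow (ii)$ is exactly Proposition \ref{prop:martingale M}, whose proof uses admissibility precisely through the eigenvector identity $\Eb_\pm[\sum_{r>0} v_{\sgn{-\Delta X(r)}}(\omega)|\Delta X(r)|^\omega] = v_\pm$. The implication $(ii) \Rightarrow (iii)$ follows from uniform integrability of $M$ together with the optional stopping theorem applied at the (possibly infinite) stopping time $H_1$: this gives $\Eb_\pm[M(H_1)] = \Eb_\pm[M(0)] = v_\pm$.

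The main work is $(iii) \Rightarrow (i)$. Set $N_\pm := \Eb_\pm[M(\infty)] = \Eb_\pm[\sum_{r>0} v_{\sgn{-\Delta X(r)}}(\omega)|\Delta X(r)|^\omega]$. By definition of the matrix $m(\omega)$ in \eqref{eq:matrix} one has $(N_+,N_-)^T = m(\omega)(v_+,v_-)^T$; combined with Assumption (A) and the positivity of $(v_+,v_-)$, Perron-Frobenius theory shows that the eigenvector equation $N_\pm = v_\pm$ is equivalent to admissibility. On $\{H_1 < \infty\}$, the strong Markov property at $H_1$ together with the self-similarity of $X$ expresses the process past $H_1$ as $|X(H_1)|$ times an independent copy of $X$ started from $\sgn{X(H_1)}$, hence
\[
\Eb_\pm\bigl[M(\infty) - M(H_1) \,\big|\, F^X_{H_1}\bigr] = |X(H_1)|^\omega \bigl(N_{\sgn{X(H_1)}} - v_{\sgn{X(H_1)}}\bigr).
\]
Since $\sgn{X(H_1)} = \mp$ under $\Pb_\pm$, taking expectations and invoking (iii) reduces the problem to the $2\times 2$ linear system
\[
N_+ - v_+ = A_+(N_- - v_-), \qquad N_- - v_- = A_-(N_+ - v_+),
\]
where $A_\pm := \Eb_\pm[|X(H_1)|^\omega \mathds{1}_{H_1 < \infty}]$, so that $(N_\pm - v_\pm)(1 - A_+A_-) = 0$.

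The main obstacle is therefore to exclude the degenerate case $A_+A_- = 1$. Iterating the strong Markov argument at the successive crossing times $H_n$ produces $\Eb_\pm[M(H_n)] = v_\pm$ for all $n \ge 1$, and expresses $\Eb_\pm[|X(H_n)|^\omega \mathds{1}_{H_n < \infty}]$ as a geometric expression in $A_+A_-$. The standing assumption that $X$ is absorbed in finite time or converges to $0$ forces $|X(H_n)| \to 0$ almost surely on the set of finite crossings; combined with the uniform $L^1$-bound $\min(v_+,v_-)\,\Eb_\pm[|X(H_n)|^\omega\mathds{1}_{H_n<\infty}] \le \Eb_\pm[M(H_n)] = v_\pm$, and a monotone convergence argument on the (increasing) jump part of $M(H_n)$, this rules out $A_+A_- = 1$ (outside of degenerate situations excluded by Assumption (A)). Hence $N_\pm = v_\pm$, which is admissibility.
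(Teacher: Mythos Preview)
Your proof is correct and follows essentially the same strategy as the paper: for $(iii)\Rightarrow(i)$, both arguments exploit the strong Markov property and self-similarity of $X$ at the successive crossing times $H_n$, and both ultimately reduce to showing that $\Eb_\pm[\sum_{r>0} v_{\sgn{-\Delta X(r)}}|\Delta X(r)|^\omega]=v_\pm$. The paper performs a direct telescoping computation of this sum over the intervals $(H_k,H_{k+1}]$, whereas you recast the same information as the $2\times 2$ linear system $N_\pm - v_\pm = A_\pm(N_\mp - v_\mp)$ and argue that its only solution is zero. One point worth noting: you explicitly isolate the obstruction $A_+A_-=1$, which the paper's telescoping silently assumes away (the identity $v_+\sum_k \Eb_+[|X(H_{2k})|^\omega] - v_+\sum_k \Eb_+[|X(H_{2k+2})|^\omega]=v_+$ requires $(A_+A_-)^k\to 0$). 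Your resolution of this edge case is a bit compressed; the cleanest way to finish is to observe that monotonicity of the jump part $J_n$ forces $\Eb_+[J_n]=v_+ - v_{(-1)^n}B_n$ to be nondecreasing and convergent, so $A_+A_-=1$ would give $\Eb_+[J_\infty]=0$, hence $J_\infty=0$ a.s., which is impossible since $X$ must jump at $H_1$ when crossing zero (and $A_+>0$ forces $H_1<\infty$ with positive probability).
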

\begin{proof}
The implication $(i)\Rightarrow(ii)$ has already been proved in Proposition \ref{prop:martingale M}, and $(ii)\Rightarrow (iii)$ follows from an application of the optional stopping theorem and the uniform integrability of $M$. Therefore only $(iii) \Rightarrow (i)$ remains to be proved. Assume that we know $(iii)$. Denote by $H_0=0<H_1<H_2<\ldots$ the sequence of times when $X$ crosses $0$. Then 
\[
    \Eb_+\left[ \sum_{s>0} v_{-\sgn{\Delta X(s)}} |\Delta X(s)|^{\omega}\right] 
    =
    \sum_{k\ge 0} \Eb_+\left[ \sum_{H_k<s\le H_{k+1}} v_{-\sgn{\Delta X(s)}} |\Delta X(s)|^{\omega} \right].
\]
By the Markov property and the self-similarity of $X$, this entails
\begin{multline*}
\Eb_+\left[ \sum_{s>0} v_{-\sgn{\Delta X(s)}} |\Delta X(s)|^{\omega}\right] 
=
\sum_{k\ge 0} \Eb_+\left[ |X(H_{2k})|^{\omega}\right] \Eb_+\left[ \sum_{0<s\le H_1} v_{-\sgn{\Delta X(s)}} |\Delta X(s)|^{\omega} \right] \\
+
\sum_{k\ge 0} \Eb_+\left[ |X(H_{2k+1})|^{\omega}\right] \Eb_-\left[ \sum_{0<s\le H_1} v_{-\sgn{\Delta X(s)}} |\Delta X(s)|^{\omega} \right].
\end{multline*}
Making use of $(iii)$, this means
\begin{multline*} 
\Eb_+\left[ \sum_{s>0} v_{-\sgn{\Delta X(s)}} |\Delta X(s)|^{\omega}\right] \\
= \sum_{k\ge 0} \Eb_+\left[ |X(H_{2k})|^{\omega}\right] \left(v_+-v_-\Eb_+[|X(H_1)|^{\omega}]\right) + \sum_{k\ge 0} \Eb_+\left[ |X(H_{2k+1})|^{\omega}\right] \left(v_--v_+\Eb_-[|X(H_1)|^{\omega}]\right). 
\end{multline*}
Using the Markov property backwards, we have $\Eb_+\left[ |X(H_{2k})|^{\omega}\right] \Eb_+[|X(H_1)|^{\omega}] = \Eb_+\left[ |X(H_{2k+1})|^{\omega}\right]$, and likewise $\Eb_+\left[ |X(H_{2k+1})|^{\omega}\right] \added{\Eb_-[|X(H_1)|^{\omega}]} = \Eb_+\left[ |X(H_{2k+2})|^{\omega}\right]$ for all $k\ge 0$. We can therefore simplify the previous expression by telescoping series (one can first use a truncated version of the series in order to split the sums). This gives 
\[
\Eb_+\left[ \sum_{s>0} v_{-\sgn{\Delta X(s)}} |\Delta X(s)|^{\omega}\right]
=
v_+\sum_{k\ge 0} \Eb_+\left[ |X(H_{2k})|^{\omega}\right] - v_+\sum_{k\ge 0} \added{\Eb_+\left[ |X(H_{2k+2})|^{\omega}\right]}.
\]
Therefore,
\[
\Eb_+\left[ \sum_{s>0} v_{-\sgn{\Delta X(s)}} |\Delta X(s)|^{\omega}\right] = v_+.
\]
Similarly, 
\[
\Eb_-\left[ \sum_{s>0} v_{-\sgn{\Delta X(s)}} |\Delta X(s)|^{\omega}\right] = v_-,
\]
and so $(v_+,v_-,\omega)$ is admissible for $X$.
\end{proof}

We may now bridge the gap between $X$ and $X^{\uparrow}$.
\begin{Prop} \label{prop:admissibility dagger}
A triplet $(v_+,v_-,\omega)$ is admissible for $X$ if, and only if, it is admissible for $X^{\uparrow}$.
\end{Prop}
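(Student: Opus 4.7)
The plan is to compare two characterizations of admissibility that are already available on each side of the equivalence. For the signed process $X$, Lemma~\ref{lem:admissibility}(iii) reduces admissibility of $(v_+,v_-,\omega)$ to the two identities $\Eb_{\pm}[M(H_1)]=v_{\pm}$, where $H_1$ denotes the first zero-crossing time of $X$. For the constant-sign process $X^{\uparrow}$, admissibility falls under the constant-sign setting of Section~\ref{sec:constant sign}, so it is equivalent to the pair of signed-cumulant equations $\Kcal_{+}^{\uparrow}(\omega)=\Kcal_{-}^{\uparrow}(\omega)=0$ built from $\psi_{\pm}^{\uparrow}$ and $\Lambda_{\pm}^{\uparrow}$ of \eqref{eq:psi dagger +} and \eqref{eq:Lévy dagger +}. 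My plan is to prove directly that the $+$ conditions on each side coincide; the $-$ case is symmetric (it can also be obtained by applying the argument to $-X$).

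For $\Eb_+[M(H_1)]$ I would split $M(H_1)$ into three parts: (a) the value term $v_-|X(H_1)|^{\omega}$ (the sign is $-$ since $X$ exits $(0,\infty)$ at $H_1$); (b) the jump sum over $0<r<H_1$; (c) the sign-change contribution $v_+|\Delta X(H_1)|^{\omega}$. By the Lamperti-Kiu description, up to the time-change $\tau$, the trajectory of $X$ on $[0,H_1]$ is $\exp(\xi_+)$ run for an independent exponential time $\zeta_+\sim\mathrm{Exp}(q_+)$, and at $H_1$ the exponential jumps by $U_+$ while the sign flips, so that $|X(H_1)|=X(H_1^-)e^{U_+}$ and $|\Delta X(H_1)|=X(H_1^-)(1+e^{U_+})$. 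Applying the compensation formula to $\xi_+$ (and integrating against the clock $\zeta_+$) turns (b) into $(q_+-\psi_+(\omega))^{-1}\int_{\R}v_{-\sgn{x}}|e^x-1|^{\omega}\Lambda_+(\mathrm{d}x)$, while (a) and (c) each pick up the factor $\Eb[e^{\omega\xi_+(\zeta_+)}]=q_+/(q_+-\psi_+(\omega))$ multiplied respectively by $v_-G_{+,-}(\omega)$ (with $G_{+,-}(\omega):=\Eb[e^{\omega U_+}]$) and by $v_+\int_{\R}(1+e^x)^{\omega}\Lambda_{U_+}(\mathrm{d}x)$. Clearing the common denominator, the condition $\Eb_+[M(H_1)]=v_+$ becomes
\begin{equation*}
v_+\bigl(q_+-\psi_+(\omega)\bigr) = \int_{\R}v_{-\sgn{x}}|e^x-1|^{\omega}\Lambda_+(\mathrm{d}x) + q_+\,v_-\,G_{+,-}(\omega) + v_+\,q_+\int_{\R}(1+e^x)^{\omega}\Lambda_{U_+}(\mathrm{d}x).
\end{equation*}

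On the other side, I would expand $\Kcal_+^{\uparrow}(\omega)$ using \eqref{eq:psi dagger +} and $\Lambda_+^{\uparrow}=\Lambda_++q_+\widetilde{\Lambda}_{U_+}$. The key simplification comes from the push-forward $\widetilde{\Lambda}_{U_+}$ (image of $\Lambda_{U_+}$ under $x\mapsto\log(1+e^x)$): because $\log(1+e^x)>0$ for every $x\in\R$ and $e^{\log(1+e^x)}-1=e^x$, the term $\int_{\R}(v_{-\sgn{y}}/v_+)|e^y-1|^{\omega}\widetilde{\Lambda}_{U_+}(\mathrm{d}y)$ equals $(v_-/v_+)\int_{\R}e^{\omega x}\Lambda_{U_+}(\mathrm{d}x)=(v_-/v_+)G_{+,-}(\omega)$. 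Multiplying $\Kcal_+^{\uparrow}(\omega)=0$ through by $v_+$ and rearranging then reproduces exactly the displayed identity, so the two notions of admissibility on the $+$ side coincide; repeating the computation from a negative Eve mass yields the $-$ side and finishes the proof.

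The main obstacle I anticipate is bookkeeping rather than any deep step: one must juggle the Lamperti time-change, the independent clock $\zeta_+$, the compensation formula for $\xi_+$, and the push-forward $\widetilde{\Lambda}_{U_+}$ without losing a factor. In parallel, one should verify that the finiteness of $m(\omega)$ (built into Assumption~(B)) forces $\psi_+(\omega)<q_+$, so that $\Eb[e^{\omega\xi_+(\zeta_+)}]=q_+/(q_+-\psi_+(\omega))$ is finite and the compensation formula applies; this should follow from the defining expression \eqref{eq:matrix}, but is worth checking carefully before running the main computation.
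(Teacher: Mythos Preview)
Your approach is correct and takes a genuinely different route from the paper. The paper's proof hinges on the one-line observation that $M(H_1)=M^{\uparrow}(H_1)$ almost surely: on $(0,H_1)$ the processes $X$ and $X^{\uparrow}$ coincide, while at $H_1$ the value term $v_-|X(H_1)|^{\omega}$ of $M$ equals the jump term $v_{\sgn{-\Delta X^{\uparrow}(H_1)}}|\Delta X^{\uparrow}(H_1)|^{\omega}$ of $M^{\uparrow}$ and conversely, so the two sums agree. This immediately transports condition~(iii) of Lemma~\ref{lem:admissibility} from $X$ to $X^{\uparrow}$; the paper then closes with a telescoping argument over the special jump times $T_k$ of $X^{\uparrow}$ (one cannot invoke Lemma~\ref{lem:admissibility} for $X^{\uparrow}$ directly, since $X^{\uparrow}$ has no zero-crossings). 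Your route instead unwinds both admissibility conditions all the way down to the Lamperti--Kiu data $(\psi_+,\Lambda_+,q_+,\Lambda_{U_+})$ and checks that each rearranges to the same algebraic identity. This is more computational but entirely sound, and arguably more transparent because it exhibits exactly how the push-forward $\widetilde{\Lambda}_{U_+}$ accounts for the $q_+G_{+,-}(\omega)$ term. The only care point is the one you already flag: finiteness of $m(\omega)$ (built into admissibility) forces $\psi_+(\omega)<q_+$, since otherwise already the contribution $\int_0^{\infty}\mathrm{e}^{(\psi_+(\omega)-q_+)t}\,\mathrm{d}t\cdot\int_{\R}|\mathrm{e}^x-1|^{\omega}\Lambda_+(\mathrm{d}x)$ to $m_{+,+}(\omega)+m_{+,-}(\omega)$ diverges; so the denominators in your computation are legitimate.
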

\begin{proof}
We use $(iii)$ in Lemma \ref{lem:admissibility} above. Define $M^{\uparrow}$ as the process in Proposition \ref{prop:martingale M} associated with $X^{\uparrow}$ and with parameters $(v_+,v_-,\omega)$. The key remark is that $M(H_1) = M^{\uparrow}(H_1)$ a.s. Indeed, under $\Pb_+$ say, $-\Delta X(H_1) = X^{\uparrow}(H_1)>0$, and $-\Delta X^{\uparrow}(H_1) = X(H_1)<0$, a.s. Therefore, $(v_+,v_-,\omega)$ is admissible for $X$ if and only if,
\begin{equation} \label{eq:M(H_1) dagger}
\Eb_{\pm}[M^{\uparrow}(H_1)] = v_{\pm}.
\end{equation}
It remains to prove that this is equivalent to $(v_+,v_-,\omega)$ being admissible for $X^{\uparrow}$. First, the optional stopping theorem gives that if $(v_+,v_-,\omega)$ is admissible for $X^{\uparrow}$, then \eqref{eq:M(H_1) dagger} holds. Conversely, we can more or less run the same arguments as in the proof of $(iii)\Rightarrow (i)$ in Lemma \ref{lem:admissibility}. For example, if we denote by $T_0=0<T_1<T_2<\ldots$ the times corresponding to those special jumps of $X^{\uparrow}$ that correspond to sign changes for $X$, then using the Markov property and self-similarity of $X
^{\uparrow}$
\[
\Eb_+\left[ \sum_{t>0} v_{-\sgn{\Delta X^{\uparrow}(t)}} |\Delta X^{\uparrow}(t)|^{\omega}\right] 
=
\sum_{k\ge 0} \Eb_+\left[ |X^{\uparrow}(T_{k})|^{\omega}\right] \Eb_+\left[ \sum_{0<t\le T_1} v_{-\sgn{\Delta X^{\uparrow}(t)}} |\Delta X^{\uparrow}(t)|^{\omega} \right] 
\]
By \eqref{eq:M(H_1) dagger}, this is
\[
\Eb_+\left[ \sum_{t>0} v_{-\sgn{\Delta X^{\uparrow}(t)}} |\Delta X^{\uparrow}(t)|^{\omega}\right]
= 
v_+\sum_{k\ge 0} \Eb_+\left[ |X^{\uparrow}(T_{k})|^{\omega}\right] \left(1-\Eb_+[|X^{\uparrow}(T_1)|^{\omega}]\right).
\]
Yet by applying the Markov property backwards,
\[
\sum_{k\ge 0} \Eb_+\left[ |X^{\uparrow}(T_{k})|^{\omega}\right] \left(1-\Eb_+[|X^{\uparrow}(T_1)|^{\omega}]\right) = \sum_{k\ge 0} \Eb_+\left[ |X^{\uparrow}(T_{k})|^{\omega}\right] - \sum_{k\ge 0} \Eb_+\left[ |X^{\uparrow}(T_{k+1})|^{\omega}\right] = 1.
\]
Therefore
\[
\Eb_+\left[ \sum_{t>0} v_{-\sgn{\Delta X^{\uparrow}(t)}} |\Delta X^{\uparrow}(t)|^{\omega}\right]
= 
v_+,
\]
and similarly
\[
\Eb_-\left[ \sum_{t>0} v_{-\sgn{\Delta X^{\uparrow}(t)}} |\Delta X^{\uparrow}(t)|^{\omega}\right]
= 
v_-.
\]
Thus $(v_+,v_-,\omega)$ is admissible for $X^{\uparrow}$.
\end{proof}

Proposition \ref{prop:admissibility dagger}, in turn, enables us to define general \emph{signed cumulant functions}. Recall from section \ref{sec:MAP} the notation $G_{+,-}(q):=\Eb[\mathrm{e}^{qU_+}]$ and $G_{-,+}(q):=\Eb[\mathrm{e}^{qU_-}]$ for the Laplace transforms of the special jumps.
\begin{Cor} \label{cor:cumulant}
Let 
\begin{align*}
\Kcal_+(q) 
&= \psi^{\uparrow}_+(q) + \int_{\R}\Lambda^{\uparrow}_+(\mathrm{d}x) \frac{v_{-\sgn{x}}(q)}{v_+(q)} \left|\mathrm{e}^{x}-1\right|^q \\
&= \kappa_+(q) + \frac{v_-(q)}{v_+(q)}\left(\int_0^{\infty} \Lambda_+(\mathrm{d}x) |\mathrm{e}^x-1|^q +q_+ G_{+,-}(q)\right),
\end{align*}
and 
\begin{align*}
\Kcal_-(q) 
&= \psi^{\uparrow}_-(q) + \int_{\R}\Lambda^{\uparrow}_-(\mathrm{d}x) \frac{v_{\sgn{x}}(q)}{v_-(q)} \left|\mathrm{e}^{x}-1\right|^q \\
&=\kappa_-(q) + \frac{v_+(q)}{v_-(q)}\left(\int_0^{\infty} \Lambda_-(\mathrm{d}x) |\mathrm{e}^x-1|^q +q_- G_{-,+}(q)\right),
\end{align*}
be the signed cumulant functions associated with $X^{\uparrow}$ -- which we rephrased in terms of $X$ thanks to \eqref{eq:psi dagger +} and \eqref{eq:psi dagger -}. Then the suitable martingale exponents $\omega$ for $X$ are the roots of $(\Kcal_+,\Kcal_-)$.
\end{Cor}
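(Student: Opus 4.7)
The approach splits into two independent parts: establishing the algebraic identity between the two displayed expressions for $\Kcal_\pm$, and then deducing the characterization of admissible exponents by combining the constant-sign analysis with Proposition \ref{prop:admissibility dagger}.

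For the algebraic identity, I would substitute \eqref{eq:psi dagger +} for $\psi^{\uparrow}_+$ and \eqref{eq:Lévy dagger +} for $\Lambda^{\uparrow}_+$ into the first expression for $\Kcal_+$, and split the integral $\int_\R \Lambda^{\uparrow}_+(\mathrm{d}x) \frac{v_{-\sgn{x}}(q)}{v_+(q)} |\mathrm{e}^x - 1|^q$ into a contribution from $\Lambda_+$ and one from the compound-Poisson piece $q_+ \widetilde{\Lambda}_{U_+}$. The $\Lambda_+$--part is further split by sign of $x$: on $(-\infty,0)$ the weight is $v_+/v_+ = 1$ and combines with $\psi_+$ to yield $\kappa_+(q)$, while on $(0,\infty)$ it contributes $\frac{v_-(q)}{v_+(q)} \int_0^\infty |\mathrm{e}^x - 1|^q \Lambda_+(\mathrm{d}x)$. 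For the $\widetilde{\Lambda}_{U_+}$--part, I pull the integral back to $\Lambda_{U_+}$ via the change of variables $y \mapsto x = \log(1+\mathrm{e}^y)$; since the image lies in $(0,\infty)$, one has $\sgn{x} = +1$ throughout and $|\mathrm{e}^x - 1|^q = \mathrm{e}^{qy}$, so this collapses to $q_+ \frac{v_-(q)}{v_+(q)} G_{+,-}(q)$. Collecting the terms reproduces the second form. The computation for $\Kcal_-$ is perfectly symmetric, using \eqref{eq:psi dagger -} and the analog of \eqref{eq:Lévy dagger +} for $\Lambda^{\uparrow}_-$, with $G_{-,+}$ in place of $G_{+,-}$.

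For the admissibility statement, the key input is Proposition \ref{prop:admissibility dagger}, which asserts that $(v_+, v_-, \omega)$ is admissible for $X$ if and only if it is admissible for $X^{\uparrow}$. Since $X^{\uparrow}$ is a constant-sign driving process, the analysis of Section \ref{sec:constant sign} applies directly: the first form of $\Kcal_\pm$ in the statement is exactly the signed cumulant function of $X^{\uparrow}$ in the sense of \eqref{eq:sum positive cumulant}--\eqref{eq:sum negative cumulant}, so an admissible exponent for $X^{\uparrow}$ is exactly a common root of $(\Kcal_+, \Kcal_-)$. Combining these two facts delivers the corollary.

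The principal obstacle is the bookkeeping in the change-of-variables computation for the $\widetilde{\Lambda}_{U_+}$--contribution, where one must use crucially that the pushforward is supported on $(0,\infty)$ in order to single out the correct weight $v_-/v_+$ appearing in front of $G_{+,-}(q)$ and to reconcile the cross-terms arising from $\psi^{\uparrow}_+$ and the positive-jump portion of $\Lambda^{\uparrow}_+$. Beyond that, the corollary is a direct synthesis of the preceding universality machinery: Proposition \ref{prop:admissibility dagger} transports the problem to the constant-sign setting where the signed cumulants are explicit.
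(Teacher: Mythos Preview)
Your overall strategy is right and matches the paper's intent: the admissibility claim is an immediate consequence of Proposition \ref{prop:admissibility dagger} (which transports the problem to $X^{\uparrow}$) combined with the constant-sign analysis of Section \ref{sec:constant sign}, and the displayed algebraic identity is indeed a matter of unwinding \eqref{eq:Lévy dagger +}.

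There is, however, a concrete bookkeeping error in your derivation of the identity. You substitute both $\psi^{\uparrow}_+$ and $\Lambda^{\uparrow}_+$ and then assert that $\psi_+(q) + \int_{-\infty}^0 \Lambda_+(\mathrm{d}x)\,|\mathrm{e}^x-1|^q$ equals $\kappa_+(q)$. That is false when $X$ changes sign: $\kappa_+$ is the cumulant of the positive growth-fragmentation $\Xbf^+$, and computed via the constant-sign driver $X^{\uparrow}$ it reads $\kappa_+(q)=\psi^{\uparrow}_+(q)+\int_{-\infty}^0\Lambda^{\uparrow}_+(\mathrm{d}x)\,|\mathrm{e}^x-1|^q$ (cf.\ Remark \ref{rk:Levy-Ito}). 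The two expressions differ exactly by the term $q_+\bigl(\int_\R (1+\mathrm{e}^y)^q\,\Lambda_{U_+}(\mathrm{d}y)-1\bigr)$ that your substitution of \eqref{eq:psi dagger +} produces; this is the ``cross-term'' you mention, but it does \emph{not} cancel against anything in your collected expression --- it is part of $\kappa_+$ itself.

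The clean fix is to refrain from substituting $\psi^{\uparrow}_+$. Split only the integral in the first display according to the sign of $x$: the negative-$x$ portion together with $\psi^{\uparrow}_+$ is, by definition, $\kappa_+(q)$ for the constant-sign process $X^{\uparrow}$. For the positive-$x$ portion one then invokes \eqref{eq:Lévy dagger +}, where your change of variables (using that $\widetilde{\Lambda}_{U_+}$ is supported on $(0,\infty)$ and that $|\mathrm{e}^{\log(1+\mathrm{e}^y)}-1|^q=\mathrm{e}^{qy}$) correctly yields $\int_0^\infty \Lambda_+(\mathrm{d}x)\,|\mathrm{e}^x-1|^q + q_+ G_{+,-}(q)$. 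This delivers the second display directly, and symmetrically for $\Kcal_-$.
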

The final end to the universality of $\Mcal(n)$ is provided by the next theorem.
\begin{Thm} \label{thm:universality M(n)}(Universality of $\Mcal(n)$)

\noindent Let $X$ and $X'$ be two possibly signed cell processes, driving the same growth-fragmentation $\Xbf=\Xbf'$. Then $(v_+,v_-,\omega)$ is admissible for $X$ if, and only if, it is admissible for $X'$.
\end{Thm}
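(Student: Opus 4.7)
The plan is to reduce the general signed case to the constant-sign case by passing through the associated processes $X^\uparrow$ and $(X')^\uparrow$, and then to glue together Proposition \ref{prop:admissible constant sign} and Proposition \ref{prop:admissibility dagger}.

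First, starting from the signed driving cell process $X$, I would form $X^\uparrow$ as in Section \ref{sec:universality M(n) general}: each time $X$ crosses zero, switch to the complementary cell born at that jump, and repeat inductively. As was observed there, $X^\uparrow$ is a self-similar Markov process which, under $\Pb_z$, stays of constant sign equal to $\sgn(z)$. Crucially, the cell system driven by $X^\uparrow$ and the cell system driven by $X$ differ only by a relabelling of the Eve cell along each sign change: the multiset of surviving masses at any time is unchanged. Hence $X$ and $X^\uparrow$ drive the same signed growth-fragmentation $\Xbf$. The same construction applied to $X'$ produces a constant-sign cell process $(X')^\uparrow$ driving $\Xbf'$.

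Now assume $\Xbf = \Xbf'$. Then by the previous paragraph, $X^\uparrow$ and $(X')^\uparrow$ are both constant-sign cell processes driving the same growth-fragmentation. Proposition \ref{prop:admissible constant sign} therefore asserts that $(v_+,v_-,\omega)$ is admissible for $X^\uparrow$ if, and only if, it is admissible for $(X')^\uparrow$. On the other hand, Proposition \ref{prop:admissibility dagger} shows that admissibility for $X$ is equivalent to admissibility for $X^\uparrow$, and likewise admissibility for $X'$ is equivalent to admissibility for $(X')^\uparrow$. Chaining these three equivalences gives the claim.

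The only nontrivial point in the argument is the verification that $X$ and $X^\uparrow$ induce exactly the same signed growth-fragmentation $\Xbf$; once this is granted, the theorem is a bookkeeping exercise. This identification is almost by construction: the successive trajectories of $X^\uparrow$ on each sign-constant excursion of $X$ are precisely the trajectories of the two cells (the surviving one and the complementary newborn one) that are alive during that excursion, so the full multiset of cells produced by the recursive construction of Section \ref{sec:growth-frag} is the same whether one starts from $X$ or from $X^\uparrow$. I expect the only subtlety to be checking that the enumeration rule used to label daughter cells (descending lexicographic order of absolute jump sizes) is compatible with this swap, which follows because at each sign change the roles of the ``selected'' and ``complementary'' cell are exchanged but the unordered family of children is the same.
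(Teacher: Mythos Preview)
Your proof is correct and follows exactly the same route as the paper: chain the equivalences $X \leftrightarrow X^\uparrow \leftrightarrow (X')^\uparrow \leftrightarrow X'$ via Proposition \ref{prop:admissibility dagger} (twice) and Proposition \ref{prop:admissible constant sign} (once). The paper's proof is a one-sentence invocation of these three equivalences; your additional discussion about why $X$ and $X^\uparrow$ drive the same growth-fragmentation is something the paper simply asserts earlier (``it is plain that $X^\uparrow$ carries the same growth-fragmentation as $X$ itself''), so you are not missing anything.
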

\begin{proof}
This is a corollary of Propositions \ref{prop:admissible constant sign} and \ref{prop:admissibility dagger}. We have the following equivalences: $(v_+,v_-,\omega)$ is admissible for $X$ if and only if it is admissible for $X^{\uparrow}$ (Proposition \ref{prop:admissibility dagger}), \textit{i.e.} if and only if it is admissible for $(X')^{\uparrow}$ (Proposition \ref{prop:admissible constant sign}), \textit{i.e.} if and only if it is admissible for $X'$ (Proposition \ref{prop:admissibility dagger}).
\end{proof}

\section{The spinal decomposition}
\label{sec:spinal decomposition}
This section is devoted to the study of self-similar signed growth-fragmentations under the change of measure given in section \ref{sec:change measure}. In particular, we aim at describing the law of the tagged cell under $\Phat_z$. Roughly speaking, we shall see that by changing the measure according to section \ref{sec:change measure}, the tagged cell $\Xhat$ evolves as an explicit self-similar Markov process $Y$, and conditionally on its evolution, the growth-fragmentations induced by the jumps of $\Xhat$ are independent with law $\Pbf_x$ where $-x$ is the jump size.

\subsection{Description and results}

\textbf{Description of the Markov process $Y$.} We first introduce a Markov process that will describe the law of the \emph{spine} in the next paragraph. Remember the couple of Lévy measures $(\Lambda_+^{\uparrow},\Lambda_-^{\uparrow})$ for the constant sign process constructed in paragraph \ref{par:constant sign}. Let us set some notation and write 
\begin{align*}
\sigma_{+}(q) 
&:= \frac{v_-(\omega)}{v_+(\omega)} \int_0^{\infty} |\mathrm{e}^x-1|^q \Lambda^{\uparrow}_{+}(\mathrm{d}x) \\
&= \frac{v_-(\omega)}{v_+(\omega)}\left(\int_0^{\infty} \Lambda_+(\mathrm{d}x) |\mathrm{e}^x-1|^q +q_+ G_{+,-}(q)\right), \quad q\ge 0,
\end{align*}
and symmetrically,
\begin{align*}
\sigma_{-}(q) 
&:= \frac{v_+(\omega)}{v_-(\omega)} \int_0^{\infty} |\mathrm{e}^x-1|^q \Lambda^{\uparrow}_{-}(\mathrm{d}x) \\
&=\frac{v_+(\omega)}{v_-(\omega)}\left(\int_0^{\infty} \Lambda_-(\mathrm{d}x) |\mathrm{e}^x-1|^q +q_- G_{-,+}(q)\right), \quad q\ge 0.
\end{align*}
Recall the notation $(\Kcal_+,\Kcal_-)$ for the signed cumulant functions and $(\kappa_+,\kappa_-)$ for the cumulant functions (see section \ref{sec:constant sign} and Corollary \ref{cor:cumulant}). Define the following matrix
\[
\Fhat(q) := 
\begin{pmatrix}
\kappa_+(\omega+q) & \sigma_+(\omega+q) \\
\sigma_-(\omega+q) & \kappa_-(\omega+q) 
\end{pmatrix}, \quad q\ge 0.
\]
\begin{Lem} \label{lem:Fhat}
Let $(\xihat_{+},\xihat_{-})$ be a pair of independent Lévy processes with Laplace exponents
\[
\psihat_{+}(q):=\kappa_+(\omega+q)-\kappa_+(\omega), \quad q\ge 0,
\]
and 
\[
\psihat_{-}(q):=\kappa_-(\omega+q)-\kappa_-(\omega), \quad q\ge 0.
\]
Furthermore, let $\qhat_{\pm}:= \sigma_{\pm}(\omega)$, and $(\Uhat_{+,-},\Uhat_{-,+})$ be a pair of random variables with respective Laplace transforms $G_{+,-}(q) := \frac{\sigma_+(\omega+q)}{\sigma_+(\omega)}$ and $G_{-,+}(q) := \frac{\sigma_-(\omega+q)}{\sigma_-(\omega)}$ for $q\ge 0$. Then the Markov additive process $(\xihat,\widehat{J})$ defined piecewise as in \eqref{eq:piecewise} with these characteristics has matrix exponent $\Fhat$. 
\end{Lem}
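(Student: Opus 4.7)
The plan is to compute the matrix exponent directly from formula \eqref{eq: F matrix} applied to the prescribed MAP characteristics, and then simplify using Assumption (B). This is essentially a bookkeeping computation, but with one conceptual point to flag.

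First, I would check that the laws postulated in the statement are well-defined. The only nontrivial point is that $G_{+,-}(q) = \sigma_+(\omega+q)/\sigma_+(\omega)$ (and similarly $G_{-,+}$) really is the Laplace transform of a probability distribution. This follows by writing $|\mathrm{e}^x - 1|^{\omega+q} = |\mathrm{e}^x - 1|^\omega \cdot \mathrm{e}^{q\log|\mathrm{e}^x - 1|}$, so that $G_{+,-}$ is the Laplace transform, at argument $q$, of the image under $x\mapsto \log|\mathrm{e}^x-1|$ of the probability measure obtained by normalizing $|\mathrm{e}^x-1|^\omega \Lambda^{\uparrow}_+(\mathrm{d}x)$; the total mass is finite because $m(\omega)$ is finite.

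Next I would assemble the ingredients of \eqref{eq: F matrix}. The intensity matrix of $\widehat{J}$ is $\widehat{Q} = \bigl(\begin{smallmatrix} -\qhat_+ & \qhat_+ \\ \qhat_- & -\qhat_- \end{smallmatrix}\bigr)$, and the matrix of Laplace transforms of the special jumps has diagonal entries equal to $1$ (convention $\Uhat_{i,i}=0$) and off-diagonal entries $G_{+,-}(q), G_{-,+}(q)$. Using $\qhat_\pm G_{\pm,\mp}(q) = \sigma_\pm(\omega+q)$, the Hadamard product is
\[
\widehat{Q}\circ\widehat{G}(q) = \begin{pmatrix} -\sigma_+(\omega) & \sigma_+(\omega+q) \\ \sigma_-(\omega+q) & -\sigma_-(\omega) \end{pmatrix}.
\]
Adding $\diag(\psihat_+(q),\psihat_-(q))$ yields the tentative matrix exponent
\[
\begin{pmatrix} \kappa_+(\omega+q)-\kappa_+(\omega)-\sigma_+(\omega) & \sigma_+(\omega+q) \\ \sigma_-(\omega+q) & \kappa_-(\omega+q)-\kappa_-(\omega)-\sigma_-(\omega) \end{pmatrix}.
\]

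To conclude, I would invoke Assumption (B) together with the identity $\Kcal_\pm(q) = \kappa_\pm(q) + \sigma_\pm(q)$, read off from the defining formulas in Corollary \ref{cor:cumulant} (with the convention, consistent with the definition of $\sigma_\pm$, that the ratio $v_\mp/v_\pm$ is evaluated at the fixed exponent $\omega$). Applied at $q=\omega$ where $\Kcal_\pm(\omega)=0$, this gives $\sigma_\pm(\omega)=-\kappa_\pm(\omega)$, so each diagonal entry collapses to $\kappa_\pm(\omega+q)$. The same identity applied at $\omega+q$ identifies the off-diagonal entries with $\Kcal_\pm(\omega+q)-\kappa_\pm(\omega+q)$. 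This matches $\Fhat(q)$ exactly.

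The main obstacle is purely notational: making sure to use the same eigenvector normalization in the definitions of $\Kcal_\pm$ and $\sigma_\pm$, so that the identity $\Kcal_\pm-\kappa_\pm=\sigma_\pm$ holds as functions of $q$ and not only at the single value $q=\omega$. Once this convention is fixed, the computation goes through in a few lines.
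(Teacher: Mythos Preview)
Your proof is correct and follows essentially the same route as the paper: compute the matrix exponent from formula \eqref{eq: F matrix} and simplify the diagonal entries using $\Kcal_\pm(\omega)=0$. The paper's proof is terser (it only writes out the $(1,1)$ entry), whereas you carry out all four entries and, usefully, flag the notational convention on $v_\mp/v_\pm$ needed for the off-diagonal identification $\Kcal_\pm(\omega+q)-\kappa_\pm(\omega+q)=\sigma_\pm(\omega+q)$ to hold --- a point the paper leaves implicit; one small imprecision is that your normalizing measure for $G_{+,-}$ should be $|\mathrm{e}^x-1|^\omega \Lambda^{\uparrow}_+(\mathrm{d}x)$ restricted to $(0,\infty)$, matching the domain of integration in $\sigma_+$.
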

\begin{Rk} \label{rk:Levy-Ito}
Note that for instance 
\[
\kappa_+(\omega+q)-\kappa_+(\omega) = \psi^{\uparrow}_+(\omega+q)-\psi^{\uparrow}_+(\omega) + \int_{(-\infty,0)} \left((1-\mathrm{e}^x)^{\omega+q} - (1-\mathrm{e}^x)^{\omega} \right)\Lambda^{\uparrow}_+(\mathrm{d}x).
\]
Therefore $\xihat_{+}$ can be obtained by the Lévy-Itô decomposition as a superposition of a Lévy process $\eta_+$ with Laplace exponent $q\mapsto \psi^{\uparrow}_+(\omega+q)-\psi^{\uparrow}_+(\omega)$, and a compound Poisson process $\nu_+$ with Lévy measure $\mathrm{e}^{\omega x}\widetilde{\Lambda}_{+ \big|(-\infty,0)}(\mathrm{d}x)$, where $\widetilde{\Lambda}_+$ is the pushforward measure of $\Lambda^{\uparrow}_+$ by $x\mapsto \log |1-\mathrm{e}^x|$. In this decomposition, $\nu_+$ will in fact stand for special jumps of the spine corresponding to changes in the \emph{generation} of the spine (when we select a negative jump), whereas $\eta_+$ stems from biasing $\xi_+$ according to its exponential martingale.
\end{Rk}
\begin{Not} \label{not:Y and N}
We shall denote by $Y$ the real-valued self-similar Markov process with Lamperti-Kiu characteristics $(\alpha,\Fhat)$. 
\end{Not}
\begin{proof}
The only point is to prove that $\Fhat$ is indeed the matrix exponent of this MAP. This follows from straightforward calculations, using $\Kcal_+(\omega)=\Kcal_-(\omega)=0$. For example, the first entry of the matrix exponent should be 
\[
\psihat_+(q) - \qhat_+ = \kappa_+(\omega+q)-\kappa_+(\omega) - \frac{v_-(\omega)}{v_+(\omega)}\int_0^{\infty} |\mathrm{e}^x-1|^{\omega} \Lambda^{\uparrow}_{+}(\mathrm{d}x) = \kappa_{+}(\omega+q)  - \Kcal_+(\omega) = \kappa_+(\omega+q).
\]
\end{proof}

\medskip
\noindent \textbf{Rebuilding the growth-fragmentation from the spine.} To give a precise statement on the law of the growth-fragmentation under $\Phat_z$, we need to rebuild the growth-fragmentation from the spine. As in section \ref{sec:growth-frag}, the first step is to label the jumps of $\Xhat$. In general, we do not know if we can rank those in lexicographical order, and thus we use the following procedure. Jumps of the tagged cell $\Xhat$ will be labelled by pairs $(n,j)$, $n\ge 0$ denoting the generation of the tagged cell immediately before the jump, and $j\ge 1$ being the rank (in the usual lexicographical sense) of the jump among those of the tagged cell at generation $n$ (we also count the final jump, when the generation changes to $n+1$). For each such $(n,j)$, we can define the growth-fragmentation $\Xbfhat_{n,j}$ stemming from the corresponding jump. More precisely, if the generation stays the same during the $(n,j)$--jump, then we set
\[
\Xbfhat_{n,j}(t) := \left\{\left\{ \Xcal_{uw}(t-b_{uw}+b_u), \; w\in\Ub \; \text{and} \; b_{uw}\le t+b_u<b_{uw}+\zeta_{uw} \right\}\right\},
\]
where $u$ is the label of the cell born at the $(n,j)$--jump. On the contrary, if the $(n,j)$--jump corresponds to a jump for the generation of the tagged cell, then the tagged cell jumps from label $u$ to label $uk$ say, and we set
\[
\Xbfhat_{n,j}(t) := \left\{\left\{ \Xcal_{uw}(t-b_{uw}+b_{uk}), \; w\in\Ub\setminus \{k\} \; \text{and} \; b_{uw}\le t+b_{uk}<b_{uw}+\zeta_{uw} \right\}\right\}.
\]
Finally, we agree that $\Xbfhat_{n,j} := \partial$ when the $(n,j)$--jump does not exist, and this sets $\Xbfhat_{n,j}$ for all $n\ge 0$ and all $j\ge 1$.
\\ \\ \noindent \textbf{Description of the growth-fragmentation under $\Phat_z$.} We are now set to describe the law of $\Xbf$ under $\Phat_z$. Recall the definition of $Y$ from Notation \ref{not:Y and N}, and that $n_t$ denotes the generation of the spine at time $t$.
\begin{Thm} \label{thm:spine}
Under $\Phat_1$, $(\Xhat(t), 0\le t<b_{\Lcal})$ is distributed as $(Y(t), 0\le t<I)$. 
Moreover, conditionally on $(\Xhat(t), n_t)_{0\le t<b_{\Lcal}}$, the processes $\Xbfhat_{n,j}$, $n\ge 0$, $j\ge 1$, are independent and each $\Xbfhat_{n,j}$ has law $\Pbf_{x(n,j)}$ where $-x(n,j)$ is the size of the $(n,j)$--th jump.
\end{Thm}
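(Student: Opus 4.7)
The plan is to identify the law of the tagged cell $\Xhat$ under $\Phat_1$ with that of the self-similar Markov process $Y$ by combining the defining change of measure with an Esscher transform of the underlying MAP, and then to transfer the conditional independence of the off-spine subtrees from Proposition \ref{prop:spine-temporal}. I first reduce to a single spine generation: let $T_1 := b_{\Lcal(1)}$ so that on $\{T_1>t\}$ the tagged cell coincides with $\Xcal_{\varnothing}$. Unwinding the definition of $\Phat_z$ and the conditional law of $\Lcal(1)$, for any bounded measurable $F$ and $i\ge 1$,
\[
\Ehat_z\bigl[F(\Xcal_{\varnothing})\mathds{1}_{\Lcal(1)=i}\bigr] = \frac{1}{v_{\sgn{z}}(\omega)|z|^{\omega}} \Ecal_z\bigl[v_i|\Xcal_i(0)|^{\omega} F(\Xcal_{\varnothing})\bigr].
\]
Summing over $i$, this says that $\Xcal_{\varnothing}$ under $\Phat_z$ has the law of $\Xcal_{\varnothing}$ under $\Pcal_z$ biased by the terminal value of the martingale $M$ of Proposition \ref{prop:martingale M}, and $\Lcal(1)$ is chosen among the jumps of $\Xcal_{\varnothing}$ with weights proportional to $v_{\sgn{-\Delta\Xcal_{\varnothing}(r)}}(\omega)|\Delta\Xcal_{\varnothing}(r)|^{\omega}$.

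The second step is to identify the tilted Lamperti-Kiu characteristics of this biased Eve epoch and match them with $\Fhat$. Passing to the constant-sign lift $\Xcal_{\varnothing}^{\uparrow}$ from Section \ref{sec:universality M(n) general}, the biasing amounts to an Esscher transform by the exponential martingale $\mathrm{e}^{\omega\xi^{\uparrow}_{\pm}(t)-\psi^{\uparrow}_{\pm}(\omega)t}$, which turns the Lévy measure $\Lambda^{\uparrow}_{\pm}$ into $\mathrm{e}^{\omega x}\Lambda^{\uparrow}_{\pm}(\mathrm{d}x)$. By the compensation formula applied piecewise, the part of this tilted measure lying on negative $x$ (jumps that preserve the sign of $X$) contributes the diagonal Laplace exponent
\[
\psi^{\uparrow}_{\pm}(\omega+q)-\psi^{\uparrow}_{\pm}(\omega)+\int_{(-\infty,0)}\bigl((1-\mathrm{e}^x)^{\omega+q}-(1-\mathrm{e}^x)^{\omega}\bigr)\Lambda^{\uparrow}_{\pm}(\mathrm{d}x) = \kappa_{\pm}(\omega+q)-\kappa_{\pm}(\omega) = \psihat_{\pm}(q),
\]
anticipated in Remark \ref{rk:Levy-Ito}; this matches the diagonal of $\Fhat$ once the sign-change rate $\qhat_{\pm}$ is subtracted, as in the proof of Lemma \ref{lem:Fhat}. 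The positive-$x$ part of the tilted measure records atomic sign-change events of the spine, and the compensation formula shows that these occur at rate $\qhat_{\pm}=\sigma_{\pm}(\omega)$ with jump law whose Laplace transform is $\sigma_{\pm}(\omega+q)/\sigma_{\pm}(\omega)$, matching the off-diagonal of $\Fhat$. The Lamperti time-change only involves $\alpha$ and is unaffected by the biasing, so $(\Xhat(t))_{0\le t<T_1}$ has the law of $Y$ stopped at its first sign change.

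The iteration over generations follows from the branching property (Theorem \ref{thm:branching}) and the multiplicative compatibility of $\Mcal(n)$: at time $T_1$, the continuation of the spine is independent of its past and distributed as a fresh instance of the same construction starting from $\Xhat(T_1)$. A Kolmogorov extension then identifies $\Xhat$ on $[0,b_{\Lcal})$ with $Y$. For the second assertion, I apply Proposition \ref{prop:spine-temporal} to observables factoring as a function of the spine times a product over functions of the off-spine subtrees $\Xbfhat_{n,j}$. Since the Radon-Nikodym density $\Mcal(n)/(v_{\sgn{z}}(\omega)|z|^{\omega})$ reweights only the spine, the branching property of the original cell system, applied at each jump of the tagged cell, ensures that the off-spine daughter cells start fresh and evolve independently, each with the unbiased law $\Pbf_{x(n,j)}$ conditionally on the spine.

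The main obstacle is the bookkeeping in the second step: the Esscher transform acts jointly on the Lévy-Itô part of $\xi^{\uparrow}_{\pm}$ and on the atomic special jumps $U_{\pm}$ that have been absorbed into $\Lambda^{\uparrow}_{\pm}$ via equation \eqref{eq:Lévy dagger +}. Disentangling the contribution of $\Lambda_{\pm}$ from that of $q_{\pm}\widetilde{\Lambda}_{U_{\pm}}$, and then splitting by the sign of $x$, is what ultimately produces the four entries of $\Fhat$, with the identity $\Kcal_{\pm}(\omega)=0$ being exactly what collapses the diagonal into the $\kappa_{\pm}$ form prescribed in Lemma \ref{lem:Fhat}.
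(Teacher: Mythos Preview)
Your overall strategy---reduce to the constant-sign Eve cell $X^{\uparrow}$, tilt the underlying L\'evy process, and match the resulting MAP characteristics with $\Fhat$---is the same as the paper's. However, there is a genuine gap in the identification step. You set $T_1=b_{\Lcal(1)}$, the time of the \emph{first generation change} of the spine, and then conclude that $(\Xhat(t))_{0\le t<T_1}$ has the law of $Y$ stopped at its \emph{first sign change}. These two times do not coincide: at $T_1$ the spine selects a jump of $\Xcal_{\varnothing}^{\uparrow}$, and with positive probability this jump is negative, so the spine stays positive and there is no sign change. The MAP piece $\xihat_+$ of $Y$ describes the spine between sign changes, and therefore typically spans a geometric number $G$ of generation changes (this is exactly the variable $G$ in the paper's computation of $Q_+$). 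The diagonal Laplace exponent
\[
\psihat_+(q)=\psi^{\uparrow}_+(\omega+q)-\psi^{\uparrow}_+(\omega)+\int_{(-\infty,0)}\bigl((1-\mathrm{e}^x)^{\omega+q}-(1-\mathrm{e}^x)^{\omega}\bigr)\Lambda^{\uparrow}_+(\mathrm{d}x)
\]
that you display is the correct target, but it is \emph{not} what a single generation produces: the first term is the Esscher tilt of $\xi^{\uparrow}_+$ (one generation), while the integral term is the compound Poisson process $\nu_+$ of Remark~\ref{rk:Levy-Ito}, which accounts for the \emph{additional} same-sign generation changes accumulated before the first sign change. Your write-up presents both contributions as arising from one generation, which is where the argument breaks.

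Concretely, what is missing is the concatenation argument: from one generation you get the Esscher-tilted process $\eta_+$ run for an independent exponential time of rate $-\psi^{\uparrow}_+(\omega)$, followed by an independent jump drawn from $(-\psi^{\uparrow}_+(\omega))^{-1}\mathrm{e}^{\omega x}\widetilde{\Lambda}^{\uparrow}_+(\mathrm{d}x)$; conditioning on whether this jump lands in $(-\infty,0)$ or $(0,\infty)$ decides whether the next generation has the same sign or not. You then need to sum a geometric number of same-sign pieces to recover $\psihat_+$, $\qhat_+=\sigma_+(\omega)$, and the law of $\Uhat_{+,-}$. The paper carries out exactly this accumulation (the three bullet points computing $\Xi_+$, $Q_+$, $V_{+,-}$); your ``iteration over generations follows from the branching property'' sentence does not substitute for it, because the Markov-additive structure of $Y$ is indexed by sign changes, not by generations. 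A related imprecision: calling the bias by the terminal value $\sum_{r>0}v_{\sgn{-\Delta X(r)}}(\omega)|\Delta X(r)|^{\omega}$ an ``Esscher transform'' obscures that the compensation formula is doing the real work of turning a sum over jump times into a killed-and-tilted L\'evy description; this should be made explicit.
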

Before we come to the proof, let us make some comments on this result.
\begin{Rk} \label{Rk: spine thm}
\begin{enumerate}
    \item We can give the joint law of $(\Xhat(t), n_t)_{0\le t<b_{\Lcal}}$. Note that, unlike $\Xhat$, the law of $n_t$ depends on the choice of the Eve cell. For example, in the case when the Eve cell is $X^{\uparrow}$, the joint law of $(\Xhat(t), n_t)_{0\le t<b_{\Lcal}}$ is the same as $(Y(t), N(\tau_t))_{0\le t<I}$, where $(N(t), t\ge 0)$ is the Poisson process arising from the superposition of $\nu_+$ and the compound Poisson process corresponding to the sign changes of $Y$ (modulo Lamperti time-change $\tau_t$).
    \item We can rephrase the theorem perhaps more tellingly by clarifying the characteristics $\xihat_{\pm}, \qhat_{\pm}, \Uhat_{\pm,\mp}$ describing the MAP. Let us do this for the positive part (the negative one being \added{analogous}). As explained in Remark \ref{rk:Levy-Ito}, the Lévy process $\xihat_+$ is the result of a superposition of a biased version of $\xi_+$, and a compound Poisson process. This compound Poisson process takes care of special jumps of the spine: namely, it takes care of the eventuality that the spine selects a \emph{negative} jump of the driving process, so that the spine remains positive at the next generation. The variable $\qhat_+$ is an exponential random variable which has parameter $\sigma_+(\omega)$, that is to say it corresponds to the first time the spine becomes negative. This happens either because the driving process it follows does, or because the spine jumps to a negative cell, and this is conspicuous in the two terms of $\sigma_+$. Finally, the variable $U_+$ is the intensity of the jumps of the spine when it crosses $0$ (again, both cases can happen).
    \item The signed growth-fragmentation $\Xbf$ is characterised by $(\kappa_+,\kappa_-)$. Theorem \ref{thm:spine} shows that the law of the spine also characterises $\Xbf$.
    \item One can retrieve from the first entry of $\Fhat$ the description of the spine for \emph{unsigned} growth-fragmentation presented in \cite{BBCK}, Theorem 4.2. Note, however, that the exponent $\omega$ differs, and this is because of the $h$-transform used to condition the spine to remain positive. We refer to \cite{DDK}, Appendix 8, for details on these harmonic functions for self-similar real-valued Markov processes. We will give details of this for a particular family of signed growth-fragmentation processes in the next section.
    \item The process $(\Mcal_t, t\ge 0)$ in Corollary \ref{cor:supermartingale} is a supermartingale, but when is it a martingale? Proposition \ref{prop:spine-temporal} gives that
    \[
    \forall t\ge 0, \quad \Ecal_z[\Mcal_t] = v_{\sgn{z}}(\omega) |z|^{\omega} \Phat_z(\Xhat(t)\in \R^*).
    \]
    Therefore $(\Mcal_t, t\ge 0)$ is a martingale if, and only if, for all $t\ge 0$, $\Phat_z(\Xhat(t)\in \R^*)=1$. This, in turn, is equivalent to $Y$ having infinite lifetime. In particular, if $\alpha \kappa_+'(\omega)>0$ and $\alpha \kappa_-'(\omega)>0$, then $\alpha\added{\xihat_+}$ and $\alpha\added{\xihat_-}$ both drift to $+\infty$ (\added{$\xihat_+$} and \added{$\xihat_-$} both drift \added{to} $+\infty$ or $-\infty$ depending on the sign of $\added{\psihat_{\pm}}'(0)=\kappa_{\pm}'(\omega)$), and by Lamperti time-change $Y$ has infinite lifetime and $(\Mcal_t, t\ge 0)$ is a martingale. On the other hand, if $\alpha \kappa_+'(\omega)<0$ or $\alpha \kappa_-'(\omega)<0$, then for symmetric reasons $(\Mcal_t, t\ge 0)$ is not a martingale.
\end{enumerate}
\end{Rk}

\subsection{Proof of Theorem \ref{thm:spine}}
\textbf{Proof in the constant sign case.}
We look at the specific example when the Eve cell $X$ has no sign change. In this case, the Lamperti representation holds, and so the compensation formula for Lévy processes makes it simpler to determine the law of the spine $\Xhat$. This paragraph is therefore an extension of \cite{BBCK}, when we also take into account the \emph{positive} jumps. 

Let us prove the first claim. First of all, we can restrict to the homogeneous case $\alpha=0$: for a general index $\alpha$, the result then stems from Lamperti time-change. Furthermore, the definition of $\Xhat$ and the branching property ensure that $(\Xhat(t), t\ge 0)$ is an homogeneous Markov process, and therefore can be written as the exponential of a MAP. The claim now boils down to finding its characteristics $(\Xi_{\pm}, Q_{\pm}, V_{\pm,\mp})$, and for obvious reasons of symmetry, we restrict to finding $(\Xi_+,Q_+,V_{+,-})$. 

\begin{itemize}[leftmargin=*]
\item[$\rhd$] \textsc{Determining the law of $\Xi_+$}. This is essentially done in \cite{BBCK}, but we recall the main ideas for the sake of completeness. The branching structure enables us to focus on the law of $(\Xi_+(t),0\le t\le b_{\Lcal(1)})$. Let \added{$f, g$} be two nonnegative measurable functions defined on the space of finite càdlàg paths and on $\R$ respectively. Therefore, we want to compute
\begin{align*}
&\Ehat_1 \left[ \added{f}\left(\log (\Xhat(s)), 0\le s< b_{\Lcal(1)}\right) \added{g}\left(\log \frac{\Xhat(b_{\Lcal(1)})}{\Xhat(b_{\Lcal(1)}^-)}\right) \mathds{1}_{\forall 0\le s\le b_{\Lcal(1)}, \; \Xhat(s)>0}\right] \\
&= \Eb_+\left[\sum_{t>0} |\Delta X(t)|^{\omega} \added{f}\left(\log (X(s)), 0\le s< t\right) \added{g}\left(\log \frac{-\Delta X(t)}{X(t^-)}\right) \mathds{1}_{\Delta X(t)<0}\right] \\
&= \Eb_+\left[\sum_{t>0} \mathrm{e}^{\omega \xi_+(t^-)} \added{f}\left(\xi_+(s), 0\le s< t\right) \added{g}\left(\log \left|1-\mathrm{e}^{\Delta \xi_+(t)}\right|\right)\left|1-\mathrm{e}^{\Delta \xi_+(t)}\right|^{\omega} \mathds{1}_{\Delta \xi_+(t)<0}\right] \\
&= \Eb_+\left[ \int_0^{\infty} \mathrm{d}t \mathrm{e}^{\omega \xi_+(t)} \added{f}\left(\xi_+(s), 0\le s< t\right) \right] \int_{-\infty}^0 \Lambda_+(\mathrm{d}x) \left|1-\mathrm{e}^{x}\right|^{\omega} \added{g}\left(\log \left|1-\mathrm{e}^{x}\right|\right) \\
&= \Eb_+\left[ \int_0^{\infty} \mathrm{d}t \mathrm{e}^{\omega \xi_+(t)} \added{f}\left(\xi_+(s), 0\le s< t\right) \right] \int_{-\infty}^0 \widetilde{\Lambda}_+(\mathrm{d}x) \mathrm{e}^{\omega x} \added{g}(x),
\end{align*}
where we used the compensation formula. Thus, under $\Phat_1$, on the event that $\Xhat(s)>0$ for all $s\in[0,b_{\Lcal(1)}]$, the two processes $\log \frac{\Xhat(b_{\Lcal(1)})}{\Xhat(b_{\Lcal(1)}^-)}$ and $\left(\log (\Xhat(s)), 0\le s< b_{\Lcal(1)}\right)$ are independent. The former has the law $-\psi_+(\omega)^{-1} \mathrm{e}^{\omega x} \widetilde{\Lambda}_{+\big|(-\infty,0)}(\mathrm{d}x)$, and the latter is distributed as $\xi_+$ killed according to $\exp(\omega \xi_+(t))$, so that it gives a Lévy process with Laplace exponent $q\mapsto\psi_+(\omega+q)$. Note that, in particular, $b_{\Lcal(1)}$ is an exponential random variable with parameter $-\psi_+(\omega)$. On the second hand, we can do the same for $\xihat_+$. Recall the notation $(\eta_+,\nu_+)$ from Remark \ref{rk:Levy-Ito}. Denote by $T_1$ the first time when the compound Poisson process $\nu_+$ has a jump: $T_1$ is exponential with parameter $-\psi_+(\omega)$. Since these jumps arise according to $\mathrm{e}^{\omega x}\mathrm{d}t \cdot \widetilde{\Lambda}_{+ \big|(-\infty,0)}(\mathrm{d}x)$, its first jump $\Delta \nu_+(T_1)$ is distributed according to $-\psi_+(\omega)^{-1} \mathrm{e}^{\omega x} \widetilde{\Lambda}_{+\big|(-\infty,0)}(\mathrm{d}x)$, and is independent of the process $(\eta_+(s),0\le s<T_1)$. The latter, in turn, is $\eta_+$ killed at an independent exponential time with parameter $-\psi_+(\omega)$. Since the Laplace exponent of $\eta_+$ is by definition
\[ 
\psi_{\eta_+}(q):= \psi_+(\omega+q)-\psi_+(\omega),
\]
we get that $(\eta_+(s),0\le s<T_1)$ has Laplace exponent $q\mapsto \psi_+(\omega+q)$. Therefore, we obtain the same description, and this entails that $\Xi_+$ and $\xihat_+$ have the same distribution.

\item[$\rhd$] \textsc{Determination of $Q_+$}. Call $\Hhat_1$ the first time when $\Xhat$ becomes negative. Since $X$ always remains positive when started from a positive mass, $\Hhat_1$ corresponds to the first time when the spine picks a positive jump in the change of measure \ref{sec:change measure}. Therefore, $\Hhat_1$ can be written
\[
\Hhat_1 = \sum_{i=1}^{G} \tau_i,
\]
where $G$ is a random variable corresponding to the generation of the spine at which a negative particle is selected, and \added{$\tau_i = b_{\Lcal(i)}-b_{\Lcal(i-1)}$, $i\ge 1$}. Since on the event that the spine selects a negative jump, we have seen that $b_{\Lcal(1)}$ is exponential with parameter $-\psi_+(\omega)$, we may deduce from the branching property that the $\tau_i$'s form an independent family of exponential variables with parameter $-\psi_+(\omega)$. Moreover, $G$ is a geometric variable on $\N^*$ with probability of success $p$ given by
\[
p 
:=
\Phat_1(\Xhat(b_{\Lcal(1)})<0)
=
\Eb_+\left( \sum_{t>0} \frac{v_-(\omega)}{v_+(\omega)}\left|\Delta X(t)\right|^{\omega} \mathds{1}_{\Delta X(t)>0}\right).
\]
Again, the compensation formula for $\xi_+$ yields 
\[
p 
=
-\frac{1}{\psi_+(\omega)} \cdot \frac{v_-(\omega)}{v_+(\omega)}  \int_0^{\infty} \Lambda_+(\mathrm{d}x) |\mathrm{e}^x-1|^{\omega}
=
-\frac{\sigma_+(\omega)}{\psi_+(\omega)}.
\]
As a sum of a geometric number of independent exponential variables, $\Hhat_1$ is an exponential random variable with parameter 
\[
Q_+ = -\psi_+(\omega) \cdot p = \sigma_+(\omega).
\]
Therefore $Q_+ = \qhat_+$.
\item[$\rhd$] \textsc{Determination of $V_{+,-}$}. For $q\ge 0$, we have
\[
\Ehat_1[\mathrm{e}^{qV_{+,-}}] = \sum_{i=1}^{\infty} \Ehat_1\left[\left(\frac{|\Xhat(\Hhat_1)|}{\Xhat(\Hhat_1^-)}\right)^q\mathds{1}_{G=i}\right]
\]
Let $a_i := \Ehat_1\left[\left(\frac{|\Xhat(\Hhat_1)|}{\Xhat(\Hhat_1^-)}\right)^q\mathds{1}_{G=i}\right]$, $i\ge 1$. Then, for $i\ge 2$, \added{conditioning on the spine at time $b_{\Lcal(1)}$ and using the Markov property yields}
\begin{align*}
a_i &= \Eb_+ \left( \sum_{t>0} |\Delta X(t)|^{\omega} \mathds{1}_{\Delta X(t)<0} \cdot \Ehat_{-\Delta X(t)}\left[ \left(\frac{|\Xhat(\Hhat_1)|}{\Xhat(\Hhat_1^-)}\right)^q\mathds{1}_{G=i-1}\right]\right) \\
&= \Eb_+\left( \sum_{t>0} |\Delta X(t)|^{\omega} \mathds{1}_{\Delta X(t)<0}\right) \cdot a_{i-1},
\end{align*}
by self-similarity. Hence, $(a_i)_{i\ge 1}$ is a geometric progression with common ratio 
\[
\Eb_+\left( \sum_{t>0} |\Delta X(t)|^{\omega} \mathds{1}_{\Delta X(t)<0}\right) 
=
-\frac{1}{\psi_+(\omega)} \int_{-\infty}^0 |\mathrm{e}^x-1|^{\omega} \Lambda_+(\mathrm{d}x),
\]
by an application of the compensation formula. Moreover, by another use of the compensation formula, the first term is 
\begin{align*}
a_1 &= \Ehat_1\left[\left(\frac{|\Xhat(\Hhat_1)|}{\Xhat(\Hhat_1^-)}\right)^q\mathds{1}_{G=1}\right] \\
&= \Eb_+\left[\sum_{t>0} \mathds{1}_{\Delta X(t)>0} \frac{v_-(\omega)}{v_+(\omega)} |\Delta X(t)|^{\omega} \left(\frac{|\Delta X(t)|}{X(t^-)}\right)^q\right] \\
&= \frac{v_-(\omega)}{v_+(\omega)}\Eb_+\left[ \sum_{t>0} \mathds{1}_{\Delta \xi_+(t)>0} \mathrm{e}^{\omega \xi_+(t^-)} \left|\mathrm{e}^{\Delta \xi_+(t)}-1 \right|^{q+\omega}\right] \\
&= - \frac{v_-(\omega)}{v_+(\omega)} \cdot \frac{1}{\psi_+(\omega)} \int_0^{\infty} |\mathrm{e}^{x}-1|^{q+\omega} \Lambda_+(\mathrm{d}x) \\
&= -\frac{\sigma_+(q+\omega)}{\psi_+(\omega)}.
\end{align*}
Finally, we get that 
\[
\Ehat_1[\mathrm{e}^{qV_{+,-}}] 
=
-\frac{\sigma_+(\omega+q)}{\psi_+(\omega)+\displaystyle{\int_{-\infty}^0 |\mathrm{e}^x-1|^{\omega}\Lambda_+(\mathrm{d}x)}}.
\]
Using that $\Kcal_+(\omega)=0$, we come to the conclusion that
\[
\Ehat_1[\mathrm{e}^{qV_{+,-}}] 
=
\frac{\sigma_+(\omega+q)}{\sigma_+(\omega)}.
\]
\end{itemize}
We have proved that $(\Xi_+,Q_+,V_{+,-}) \overset{\Lcal}{=} (\xihat_+,\qhat_+,\Uhat_{+,-}) $. Therefore the first claim of Theorem \ref{thm:spine} follows readily from Lemma \ref{lem:Fhat}.
\\ \\ \noindent \textbf{The spinal decomposition in the general case.}
We now prove the spinal decomposition under the tilted measure $\Phat_1$ by restricting to the previous case. More precisely, we want to prove that the law of $(\Xhat(t),n_t)_{0\le t<b_{\Lcal}}$ under $\Phat_1$ is the same as under $\Phat^{\uparrow}_1$, where $\Phat^{\uparrow}_1$ is the change of probability induced by $X^{\uparrow}$ \textit{via} section \ref{sec:change measure}. Indeed, since $X^{\uparrow}$ is nonnegative, the case of $\Phat^{\uparrow}_1$ comes under the previous paragraph, for which the spinal decomposition has just been established.

The definition of $\Phat_1$ clearly depends on the Eve cell. Note however that we have proved in Theorem \ref{thm:universality M(n)} that the exponent $\omega$ and the constants $(v_-(\omega),v_+(\omega))$ depend only on the growth-fragmentation (see also Proposition \ref{prop:admissibility dagger} for the relation between $X$ and $X^{\uparrow}$). Therefore, Proposition \ref{prop:spine-temporal} entails that the marginal law of $\Xhat$ only depends on the growth-fragmentation $\Xbf$. In order to prove that the law of $\Xhat$ itself is invariant within the same growth-fragmentation, we need to extend Proposition \ref{prop:spine-temporal} to finite-dimensional distributions. To avoid cumbersome notation, we state and prove the result for two times $s<t$. We want to show that for $z\ne 0$ and nonnegative measurable functions $f,g$ such that $f(\partial)=g(\partial)=0$,
\begin{multline} \label{eq:spine temporal extension}
v_{\sgn{z}}(\omega) |z|^{\omega}\Ehat_z\left(f(\Xhat(t)) g(\Xhat(s))\right) \\
=
\Ecal_z\left( \sum_{j\ge 1} g(X_j(s)) \Ecal_{X_j(s)}\left[\sum_{i\ge 1} v_{\sgn{X_i(t-s)}}(\omega)|X_i(t-s)|^{\omega}f(X_i(t-s))\right]\right).
\end{multline}
If one is willing to accept that $\Xhat$ is a Markov process, then this follows readily from Proposition \ref{prop:spine-temporal}. Otherwise, we can prove this directly. Let us mimic the proof of Proposition \ref{prop:spine-temporal}. Splitting over $u(t)$ as in equation \eqref{eq:decompose u(t)} and then conditioning on $\overline{\Fcal}_t$ and using the branching property, we get
\begin{multline*} 
v_{\sgn{z}}(\omega)|z|^{\omega}\Ehat_z \left(f(\Xhat(t)) g(\Xhat(s))\mathds{1}_{b_{\Lcal(n+1)}>t}\right) \\
=
\Ecal_z \left(\sum_{|w|\le n} g(\Xcal_{w(s)}(s-b_{w(s)})) v_{\sgn{\Xcal_w(t-b_w)}}(\omega) |\Xcal_w(t-b_w)|^{\omega} f(\Xcal_{w}(t-b_{w})) \mathds{1}_{b_w\le t}\right).  
\end{multline*}
We may then split this again over $w(s)=w'$. Using the branching property, this gives
\begin{multline*} 
v_{\sgn{z}}(\omega)|z|^{\omega}\Ehat_z \left(f(\Xhat(t)) g(\Xhat(s))\mathds{1}_{b_{\Lcal(n+1)}>t}\right)
= \Ecal_z\bigg(\sum_{|w'|\le n} g(\Xcal_{w'}(s-b_{w'})) \mathds{1}_{b_{w'}<s}  \\
\times  \Ecal_{\Xcal_{w'}(s-b_{w'})} \bigg[ \sum_{|w|\le n-|w'|}  v_{\sgn{\Xcal_w(t-s-b_w)}}(\omega) |\Xcal_w(t-s-b_w)|^{\omega} f(\Xcal_{w}(t-s-b_{w})) \mathds{1}_{b_w< t-s}\bigg]\bigg).  
\end{multline*}
Now taking $n\rightarrow\infty$ yields the desired identity \eqref{eq:spine temporal extension}.
\\ \\ \noindent \textbf{Proof of the second assertion.} We finally prove the second assertion of Theorem \ref{thm:spine} directly in the general setting. We will limit ourselves to proving the statement for the first generation (this is easily extended using the branching property). Let $f$ be a nonnegative measurable functional on the space of càdlàg trajectories, and $g_j$, $j\ge 1$, be nonnegative measurable functionals on the space of multiset--valued paths. For $t>0$, denote by $(\Delta_j(t), j\ge 1)$ the sequence consisting of the value of $\Xcal_{\varnothing}(t)$, and all those jumps of $\Xcal_{\varnothing}$ that happened strictly before time $t$, ranked in descending order of their absolute value. Our goal is to show that
\[
\Ehat_1 \left(f(\Xcal_{\varnothing}(s), 0\le s\le b_{\Lcal(1)}) \prod_{j\ge 1} g_j(\Xbfhat_{0,j})\right)
=
\Ehat_1 \left(f(\Xcal_{\varnothing}(s), 0\le s\le b_{\Lcal(1)}) \prod_{j\ge 1} \Ebf_{\Delta_j(b_{\Lcal(1)})} \left[g_j(\Xbf)\right]\right).
\]
But,
\begin{multline*}
\Ehat_1 \left(f(\Xcal_{\varnothing}(s), 0\le s\le b_{\Lcal(1)}) \prod_{j\ge 1} g_j(\Xbfhat_{0,j})\right) \\
=
\Ecal_1 \left(\sum_{t>0} \frac{v_{\sgn{-\Delta \Xcal_{\varnothing}(t)}}(\omega)}{v_{+}(\omega)} |\Delta \Xcal_{\varnothing}(t)|^{\omega} f(\Xcal_{\varnothing}(s), 0\le s\le t) \prod_{j\ge 1} g_j(\Xbfhat_{0,j})\right),
\end{multline*}
and the definition of the $\Xbfhat_{0,j}$ together with the branching property give
\begin{multline*}
\Ehat_1 \left(f(\Xcal_{\varnothing}(s), 0\le s\le b_{\Lcal(1)}) \prod_{j\ge 1} g_j(\Xbfhat_{0,j})\right) \\
=
\Ecal_1 \left(\sum_{t>0} \frac{v_{\sgn{-\Delta \Xcal_{\varnothing}(t)}}(\omega)}{v_{+}(\omega)} |\Delta \Xcal_{\varnothing}(t)|^{\omega} f(\Xcal_{\varnothing}(s), 0\le s\le t) \prod_{j\ge 1} \Ebf_{\Delta_j(t)} \left[g_j(\Xbf)\right]\right).
\end{multline*}
Applying the change of measure backwards, we get the desired identity. Therefore Theorem \ref{thm:spine} is proved.

%
%
\section{A distinguished family of signed growth-fragmentations}
\label{sec:excursion}
Following \cite{AD}, we construct a particular family of signed growth-fragmentations. These can be seen in the upper half-plane by cutting at heights a path with real part given by a stable Lévy process, and imaginary part a positive Brownian excursion. \added{This can be done for any self-similarity index $\alpha$ in $(0,2)$, but for reasons that will be clarified later on, we take $\alpha$ to be in $(1,2)$}.

\subsection{Notation and setup}
We recall from \cite{AD} the following framework. All the definitions and results basically extend directly from the half-planar Brownian case.

\bigskip
\noindent \textbf{The excursion measure $\n^\alpha$.}
Let $(\Omega, \F, \Pb)$ be a complete probability space, on which $X^{\alpha}$ is an $\alpha$--stable Lévy process, and $Y$ an independent Brownian motion. Recall that the Laplace exponent of $X^{\alpha}$ is of the form
\begin{equation} \label{eq: Laplace X alpha}
    \psi_{\alpha}(q) := \frac{c_+-c_-}{1-\alpha}q + \int_{\R} (\mathrm{e}^{qy}-1-q\mathrm{e}^y\mathds{1}_{|y|<1}) \nu_{\alpha}(y)\mathrm{d}y,
\end{equation}
where
\[
\nu_{\alpha}(y):=c_{\sgn{y}} |y|^{-\alpha-1},
\]
is the L\'evy measure of $X^{\alpha}$, and $c_+,c_-$ are constants such that $c_++c_->0$. We will choose the value of $c_+$ and $c_-$ later on in \cref{sec: loc largest evolution}. Call $(\F_t)_{t\ge 0}$ the standard filtration associated with $(X^{\alpha},Y)$. Write $\Xscr$ for the space of càdlàg functions $x$ with finite duration $R(x)$, equipped with the standard $\sigma$-field generated by the coordinates. Let $\Xscr_0$ be the subset of such functions in $\Xscr$ that are \emph{continuous} and vanish at $R(x)$. Finally, let
\[
U := \left\{ u=(x,y) \in \Xscr\times \Xscr_0, \; u(0)=0 \; \text{and} \; R(x)=R(y) \right\} \quad \text{and} \quad U_{\partial} := U \cup \{\partial\},
\]
where $\partial$ is a cemetery state. For $u\in U$ we shall write $R(u):=R(x)=R(y)$. We endow this set with the product $\sigma$--field $\mathscr{U}_{\delta}$ and the filtration $(\mathcal{F}_t)_{t\ge 0}$ adapted to the coordinate process on $U$. Also, we write $(L_s,s\geq 0)$ for the local time at $0$ of the Brownian motion $Y$ and $\mathcal{T}_s$ its inverse.

We define on $(\Omega, \F, \Pb)$ the \emph{excursion process} $(\epsilon^{\alpha}_s, s>0)$ as in the case of planar Brownian motion in \cite{AD}, except that we take for the real part $X$ the $\alpha$--stable Lévy process $X^{\alpha}$ (which has discontinuities), namely:
\begin{itemize}
    \item[(i)] if $\mathcal{T}_s-\mathcal{T}_{s^-}>0$, then 
    \[\epsilon^{\alpha}_s : r\mapsto \left(X^{\alpha}_{r+\mathcal{T}_{s^-}}-X^{\alpha}_{\mathcal{T}_{s^-}}, Y_{r+\mathcal{T}_{s^-}}\right), \quad r\leq \mathcal{T}_s-\mathcal{T}_{s^-},\]
    \item[(ii)] if $\mathcal{T}_s-\mathcal{T}_{s^-}=0$, then $\epsilon^{\alpha}_s = \partial$.
\end{itemize}
Then it is not difficult to see that the excursion process $(\epsilon^{\alpha}_s)_{s>0}$ is a $(\F_{\mathcal{T}_s})_{s>0}$--Poisson point process (see \cite{RY}, Chap. XII, Theorem 2.4, for the one-dimensional case). We denote by $\n^\alpha$ its intensity, which is a measure on $U$, and we denote by $\n^\alpha_+$ and $\n^\alpha_-$ its restrictions to $U^+:=\{u=(x,y)\in U, \; y\ge 0\}$ and $U^-:=\{u=(x,y)\in U, \; y\le 0\}$. An easy calculation gives the following expression for $\n^\alpha$. 
\begin{Prop}
$\n^\alpha(\mathrm{d}x,\mathrm{d}y) = n(\mathrm{d}y)\Pb((X^{\alpha})^{R(y)} \in \mathrm{d}x)$, where $n$ denotes the one-dimensional (Brownian) Itô measure on $\mathscr{X}_0$, and $(X^{\alpha})^T:=(X_t, \, t\in [0,T])$.
\end{Prop}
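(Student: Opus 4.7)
The plan is to identify the intensity $\n$ by computing a Laplace functional, using classical Itô excursion theory for $Y$ together with the independence between $X^{\alpha}$ and $Y$. The key observation is the following: conditionally on $Y$, hence on the sequence of excursion intervals $[\mathcal{T}_{s^-},\mathcal{T}_s]$, the family of increments
\[
\bigl(X^{\alpha}_{r+\mathcal{T}_{s^-}}-X^{\alpha}_{\mathcal{T}_{s^-}}\bigr)_{0\le r\le \mathcal{T}_s-\mathcal{T}_{s^-}},\quad s>0,
\]
is, by the independent and stationary increments of $X^{\alpha}$, mutually independent, with the $s$-th element distributed as $(X^{\alpha})^{R(\tilde e_s)}$, where $\tilde e_s:=Y_{\cdot+\mathcal{T}_{s^-}}|_{[0,\mathcal{T}_s-\mathcal{T}_{s^-}]}$ denotes the Brownian excursion component. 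This is legitimate even though the $\mathcal{T}_{s^-}$ are only $Y$-stopping times and not $X^{\alpha}$-stopping times, because $X^{\alpha}$ is independent of $Y$: conditioning on $Y$ freezes the interval endpoints without altering the law of $X^{\alpha}$.

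Granting this, I would argue as follows. Recall that the Brownian excursion process $(\tilde e_s)_{s>0}$ is itself a Poisson point process with intensity $n$. For a bounded measurable $F:U\rightarrow [0,\infty)$, I would condition on $Y$ and invoke the observation above to obtain
\[
\Eb\left[\exp\Bigl(-\sum_{s\le t,\,e^{\alpha}_s\ne \delta}F(e^{\alpha}_s)\Bigr)\right] = \Eb\left[\prod_{s\le t,\,\tilde e_s\ne \delta} \Eb\!\left[\left. e^{-F((X^{\alpha})^{R(\tilde e_s)},\tilde e_s)}\,\right|\,\tilde e_s\right]\right].
\]
Now apply the exponential formula for the Poisson point process $(\tilde e_s)$ of intensity $n$ to rewrite the right-hand side as
\[
\exp\left(-t\iint_U (1-e^{-F(x,y)})\,n(\mathrm{d}y)\,\Pb((X^{\alpha})^{R(y)}\in\mathrm{d}x)\right).
\]
By uniqueness of the Laplace functional of a Poisson point process, the intensity of $(e^{\alpha}_s)$ must be $n(\mathrm{d}y)\Pb((X^{\alpha})^{R(y)}\in \mathrm{d}x)$, as announced.

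The only real technical point is rigorously justifying the conditional independence statement at the heart of the argument. To do this cleanly, I would first check it for a finite collection of labelled excursion intervals of $Y$, where it is a direct consequence of the independent stationary increments of $X^{\alpha}$ applied to deterministic (given $Y$) disjoint intervals, and then extend to the countable family of all excursion intervals by a standard monotone-class argument. Once this is in place, the rest is a routine manipulation of Laplace functionals of Poisson point processes.
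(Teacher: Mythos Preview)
Your argument is correct. The paper does not actually supply a proof of this proposition; it simply asserts that ``an easy calculation gives the following expression for $\n$'' and moves on. Your Laplace-functional computation is precisely the natural way to carry out that calculation: condition on $Y$, use the independence of $X^{\alpha}$ from $Y$ together with the stationary independent increments of $X^{\alpha}$ to factor the conditional expectation over the (then deterministic) excursion intervals, and finally apply the exponential formula for the Brownian excursion Poisson point process $(\tilde e_s)$. The identification of the intensity then follows from uniqueness of the Laplace functional. Your remark about justifying the conditional independence by first treating finitely many intervals and passing to the limit is the right level of care; since $X^{\alpha}\perp Y$, conditioning on $Y$ freezes the countable family of disjoint excursion intervals and the increments of $X^{\alpha}$ over them are then independent by the Lévy property.
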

\noindent Figure \ref{fig:excursion} shows a drawing of such an excursion.

\bigskip
\noindent\textbf{Descriptions of the excursion measure $\n^{\alpha}_+$.}
We first state for future reference the Markov property of $\n^\alpha_+$. For any $u\in U$ and any $a>0$, let $T_a := \inf\{0\le t\le R(u), \; y(t)=a\}$ be the hitting time of $a$ by $y$. Recall $\Fcal_t := \sigma(u(s),0\le s \le t)$.

\begin{Prop}{(Markov property under $\n^\alpha_+$)} \label{Markov under n}

\noindent Under $\n^\alpha_+$, on the event $\{T_a<\infty\}$, the process $\left(u(T_a+t)-u(T_a)\right)_{0\le t\leq R(u)-T_a}$ is independent of $\mathcal{F}_{T_a}$ and has the law of $(X^\alpha,Y)$ killed at the first hitting time of $\{\Im(z) = -a\}$. 
\end{Prop}

\noindent The proof of \cref{Markov under n} results from the Markov property under the Itô measure $n$ (cf. Theorem 4.1, Chap. XII in \cite{RY}), and the Markov property of $X^\alpha$.

We now recall Bismut's description of $\n^{\alpha}_+$, which is also a direct consequence of the analogous description of Itô's measure (see \cite[Theorem XII.4.7]{RY}).
\begin{Prop} \label{Bismut} (Bismut's description of $\n^{\alpha}_+$)

Let $\overline{\n^{\alpha}_+}$ be the measure defined on $\R_+\times U^+$ by
\[\overline{\n^{\alpha}_+}(\mathrm{d}t,\mathrm{d}u) = \mathds{1}_{\{0\leq t\leq R(u)\}} \mathrm{d}t \, \n^{\alpha}_+(\mathrm{d}u).\]
Then under $\overline{\n^{\alpha}_+}$ the "law" of $(t,(x,y))\mapsto y(t)$ is the Lebesgue measure $\mathrm{d}a$ and conditionally on $y(t)=a$, $u^{t, \leftarrow}=\left(u(t-s)-u(t)\right)_{0\leq s\leq t}$ and $u^{t,\rightarrow}=\left(u(t+s)-u(t)\right)_{0\leq s\leq R(u)-t}$ are independent with respective laws $(-X^{\alpha},Y)$ and $(X^{\alpha},Y)$ killed when reaching $\left\{\Im(z)=-a\right\}$.
\end{Prop}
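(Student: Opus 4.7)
The strategy is to reduce everything to the classical (one-dimensional, Brownian) Bismut decomposition, using the fact that the $\alpha$-stable part $X^{\alpha}$ and the Brownian part $Y$ are independent. The identity
\[
\n(\mathrm{d}x,\mathrm{d}y) = n(\mathrm{d}y)\,\Pb((X^{\alpha})^{R(y)} \in \mathrm{d}x)
\]
stated just above, together with its restriction to $U^+$, shows that under $\n_+$ the second coordinate is distributed as a one-dimensional positive Itô excursion $y$, and conditionally on $y$ the first coordinate is simply an $\alpha$-stable Lévy process run up to time $R(y)$. Therefore it suffices to combine the classical Bismut description of the Itô excursion measure $n_+$ with a splitting of $X^{\alpha}$ at time $t$.

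I would first apply the classical Bismut identity to the $y$-coordinate: for any nonnegative measurable $G$,
\[
\int n_+(\mathrm{d}y) \int_0^{R(y)} G(t,y)\,\mathrm{d}t = \int_0^{\infty}\mathrm{d}a \int G\bigl(t,y\bigr)\,\Pb_a^{\leftarrow}\otimes\Pb_a^{\rightarrow}(\mathrm{d}y^{t,\leftarrow},\mathrm{d}y^{t,\rightarrow}),
\]
where, under $\Pb_a^{\leftarrow}\otimes\Pb_a^{\rightarrow}$, the two pieces are independent Brownian paths started from $0$ and killed when they reach $-a$ (this is exactly the killing condition $\{\Im(z)=-a\}$ applied to the imaginary part). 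Next, conditionally on $y$, the law of $(x(s),0\le s\le R(y))$ is that of the $\alpha$-stable process $X^{\alpha}$ on $[0,R(y)]$. Splitting this path at time $t$ using the independent-stationary-increments property, the forward piece
\[
\bigl(x(t+s)-x(t)\bigr)_{0\le s\le R(y)-t}
\]
is distributed as $X^{\alpha}$ on $[0,R(y)-t]$, independent of the pre-$t$ path. For the reversed pre-$t$ piece, the standard time-reversal identity for Lévy processes gives
\[
\bigl(x(t-s)-x(t)\bigr)_{0\le s\le t} \overset{\Lcal}{=} (-X^{\alpha}_s)_{0\le s\le t},
\]
so that the real part of $u^{t,\leftarrow}$ has precisely the law of $-X^{\alpha}$ on the appropriate interval.

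Combining these two facts, conditionally on $y(t)=a$, the pair $(u^{t,\leftarrow},u^{t,\rightarrow})$ is distributed as two independent processes: $u^{t,\leftarrow}$ has the law of $(-X^{\alpha},Y)$ killed when its imaginary part first hits $-a$, and $u^{t,\rightarrow}$ has the law of $(X^{\alpha},Y)$ killed similarly. Since the independence of $X^{\alpha}$ and $Y$ is preserved by the conditioning on the $y$-trajectory, and since the joint killing rule "reaching $\{\Im(z)=-a\}$" is exactly the killing rule coming from the imaginary component in the classical Bismut decomposition, we obtain the proposition.

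The only nontrivial point is the time-reversal step for the stable part: $X^{\alpha}$ is not symmetric in general, so one must invoke the duality relation $(X^{\alpha}_t-X^{\alpha}_{t-s})_{s\le t}\overset{\Lcal}{=}(X^{\alpha}_s)_{s\le t}$ rather than a direct symmetry argument. This is precisely what produces the asymmetric roles of $-X^{\alpha}$ and $X^{\alpha}$ in the statement. Once this identification is made, everything else reduces to the classical Brownian Bismut decomposition combined with Fubini.
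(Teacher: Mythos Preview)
Your proposal is correct and matches the paper's approach. The paper does not give a detailed proof; it treats the result as a direct extension of the Brownian case in \cite{ADS} and only adds the remark that the minus sign on $X^{\alpha}$ for the backward piece comes from time-reversal (the dual of $X^{\alpha}$), which is exactly the point you isolate as ``the only nontrivial step.''
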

\noindent Note that, unlike the planar Brownian case, there is a minus sign for the left part of the trajectory: this is because of the time-reversal, which involves the dual of the Lévy process $X^{\alpha}$. See Figure \ref{fig:Bismut} for an illustration. 
\begin{figure}[h] 
\begin{center}
\includegraphics[scale=0.75]{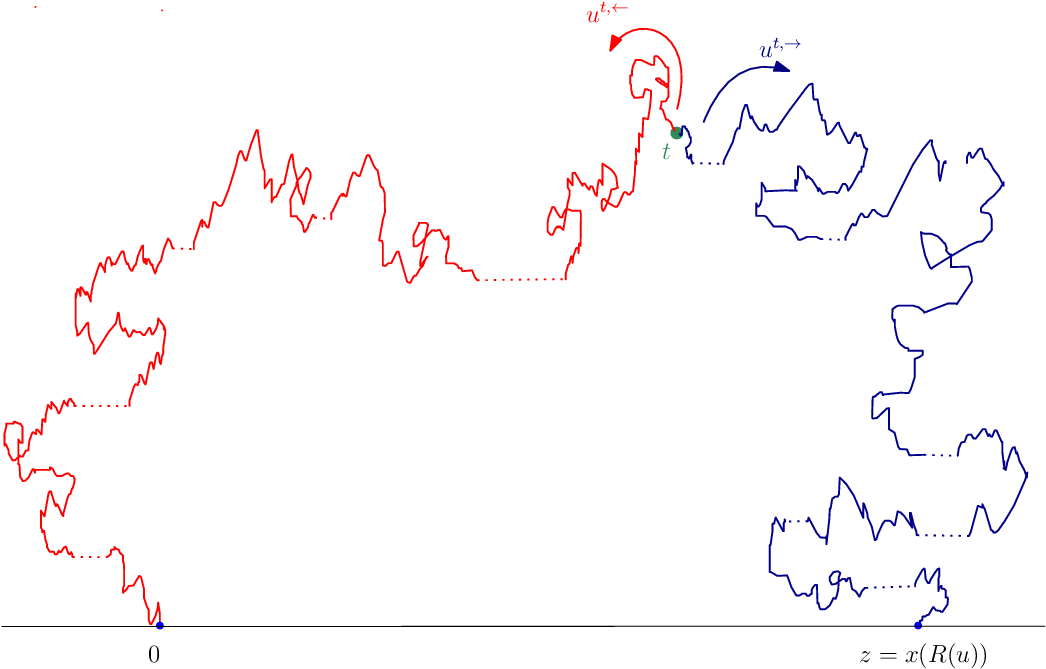}
\end{center}
\caption{An illustration of Bismut's description of $\n^{\alpha}_+$. Under $\overline{\n^{\alpha}_+}$,  conditionally on $y(t)$, the left part (in red) and right part (in blue) evolve independently and are identically distributed up to time-reversal.}
\label{fig:Bismut}
\end{figure}

\bigskip
\noindent\textbf{Disintegration of $\n^{\alpha}_+$}.
Finally, we disintegrate the infinite measure $\n^{\alpha}_+$ over the endpoint $z=x(R(u))$. This defines probability measures $\gtilde^{\alpha}_z$, $z\ne 0$, which are the laws of excursions $(X^{\alpha},Y)$ \emph{conditionally on} having displacement $z$. Introduce $\Pbb{\alpha,r}{a}{b}$ as the law of an $\alpha$--stable Lévy bridge of length $r$ between $a$ and $b$ (see \cite[Chapter VIII]{Ber}), and $\Pi_r$ as the law of a three-dimensional Bessel ($BES^3$) bridge of length $r$ from $0$ to $0$. Moreover, we denote by $(p^{\alpha}_t)_{t\ge 0}$ the transitions of the $\alpha$--stable Lévy process.

\begin{Prop} \label{disintegration prop}
We have the following disintegration formula
\begin{equation} \label{disintegration}
    \n^{\alpha}_+ = \int_{\R} \mathrm{d}z \, \frac{C_{\sgn{z}}}{|z|^{1+\alpha/2}} \, \gtilde^{\alpha}_z,
\end{equation}
where
\[C_{\pm}=\frac{\alpha}{2\sqrt{2\pi}} \int_0^{\infty} r^{\alpha/2} p^{\alpha}_1(\pm r) \mathrm{d}r,\]
and for $z\ne 0$, $\gtilde^{\alpha}_z$ is the probability measure defined by
\begin{equation} \label{gamma tilde}
    \gtilde^{\alpha}_z = \int_{\R_+} \mathrm{d}v \, \frac{p^{\alpha}_1(\sgn{z}v^{-1/\alpha})}{2\sqrt{2\pi}C_{\sgn{z}}v^{3/2+1/\alpha}} \,  \Pbb{\alpha, |z|^{\alpha}v}{0}{z}\otimes \Pi_{|z|^{\alpha}v}. 
\end{equation}
\end{Prop}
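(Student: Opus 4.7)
The plan is to start from the expression $\n(\mathrm{d}x,\mathrm{d}y) = n(\mathrm{d}y)\Pb((X^{\alpha})^{R(y)} \in \mathrm{d}x)$ given in the previous proposition, restrict to $U^+$, and successively disintegrate each factor. First, I would recall Itô's well-known disintegration of the Brownian excursion measure $n$ over the lifetime $r$ in terms of the $BES^3$ bridges, namely
\[
n(\mathrm{d}y) = \int_0^{\infty} \frac{\mathrm{d}r}{2\sqrt{2\pi}\, r^{3/2}}\, \Pi_r(\mathrm{d}y).
\]
Using this together with the independence of $X^{\alpha}$ and $Y$, we get
\[
\n_+(\mathrm{d}x,\mathrm{d}y) = \int_0^{\infty} \frac{\mathrm{d}r}{2\sqrt{2\pi}\, r^{3/2}}\, \Pb((X^{\alpha})^{r} \in \mathrm{d}x)\, \Pi_r(\mathrm{d}y).
\]

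Next I would disintegrate the $\alpha$-stable law of duration $r$ over its terminal value $z = x(r)$, which yields $\Pb((X^{\alpha})^{r} \in \mathrm{d}x) = \int_{\R} p^{\alpha}_r(z) \mathrm{d}z\, \Pbb{\alpha,r}{0}{z}(\mathrm{d}x)$. Inserting this and then applying $\alpha$-self-similarity of the transition density, $p^{\alpha}_r(z) = r^{-1/\alpha} p^{\alpha}_1(z/r^{1/\alpha})$, the expression becomes
\[
\n_+ = \int_{\R} \mathrm{d}z \int_0^{\infty} \frac{p^{\alpha}_1(z/r^{1/\alpha})}{2\sqrt{2\pi}\, r^{3/2+1/\alpha}}\, \mathrm{d}r\; \Pbb{\alpha,r}{0}{z} \otimes \Pi_r.
\]

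Now I would perform the change of variable $r = |z|^{\alpha} v$ (for $z \ne 0$, which is enough since $\{z=0\}$ has Lebesgue measure zero). A short computation gives
\[
\frac{\mathrm{d}r}{r^{3/2+1/\alpha}}\, p^{\alpha}_1(z/r^{1/\alpha}) = \frac{1}{|z|^{1+\alpha/2}}\cdot \frac{p^{\alpha}_1(\sgn{z}\, v^{-1/\alpha})}{v^{3/2+1/\alpha}}\, \mathrm{d}v,
\]
which factors the integrand as announced: the $|z|^{-1-\alpha/2}$ comes out in front, and the remaining integral in $v$ is exactly the expression \eqref{gamma tilde} defining $\gtilde^{\alpha}_z$, up to the constant $C_{\sgn{z}}$.

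The final step is to verify that $\gtilde^{\alpha}_z$ is a probability measure with $C_{\pm}$ as stated; this is the only genuine computation to perform and the only place where care is needed. Integrating out the bridge and $BES^3$ bridge factors (which are already probability measures) reduces the task to evaluating
\[
\int_0^{\infty} \frac{p^{\alpha}_1(\pm v^{-1/\alpha})}{2\sqrt{2\pi}\, v^{3/2+1/\alpha}}\, \mathrm{d}v.
\]
The substitution $r = v^{-1/\alpha}$, so that $|\mathrm{d}v|/v^{3/2+1/\alpha} = \alpha\, r^{\alpha/2}\, \mathrm{d}r$, converts this integral into
\[
\frac{\alpha}{2\sqrt{2\pi}}\int_0^{\infty} r^{\alpha/2}\, p^{\alpha}_1(\pm r)\, \mathrm{d}r = C_{\pm},
\]
confirming both the formula for $C_{\pm}$ and that $\gtilde^{\alpha}_z$ is indeed a probability. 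The main (mild) obstacle is simply bookkeeping the two successive changes of variable and ensuring finiteness of $C_{\pm}$, which follows for $\alpha \in (1,2)$ from the standard tail bound $p^{\alpha}_1(r) \sim c\, |r|^{-1-\alpha}$ as $|r| \to \infty$ (so the integrand decays like $r^{\alpha/2 - 1 - \alpha} = r^{-1-\alpha/2}$) together with the boundedness of $p^{\alpha}_1$ near $0$.
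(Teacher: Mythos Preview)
Your proof is correct and follows essentially the same route as the paper: It\^o's disintegration of $n_+$ over the lifetime, then disintegration of the stable path over its endpoint, self-similarity of $p^{\alpha}_r$, and the change of variable $r=|z|^{\alpha}v$. You go slightly further than the paper by carrying out the substitution $r=v^{-1/\alpha}$ to verify the stated formula for $C_{\pm}$ and by checking its finiteness via the tail of $p^{\alpha}_1$, whereas the paper simply declares $C_{\pm}$ to be the normalisation constants.
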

\begin{Rk}
The constants $C_{\pm}$ can be calculated (see \cite{KP-book}, Section 1). For example, if $X^{\alpha}$ is the so-called \emph{normalised} stable process of index $\alpha$, then
\[
C_{+} = \frac{\alpha}{2\sqrt{2}\pi} \Gamma\left(\frac{\alpha}{2}\right) \sin\left(\frac{\pi \alpha \rho}{2}\right),
\]
where $\rho=\Pb(X^{\alpha}(1)>0)$.
\end{Rk}
\begin{proof}
Although the proof follows exactly the same lines as in \cite{AD}, we include it here to highlight the importance of the sign of $z$, which does not show up in the Brownian case for symmetry reasons. Let $f$ and $g$ be two nonnegative measurable functions defined on $\mathscr{X}$ and $\mathscr{X}_0$ respectively. Applying It\^o's description of $n_+$ conditioned on its duration in terms of a Bessel bridge (see \cite{RY}, Chap. XII, Theorem 4.2), we get
\begin{align*}
    \int_{U} f(x)g(y) \, \n^{\alpha}_+(\mathrm{d}x,\mathrm{d}y) &= \int_{U} f(x)g(y) \, n_+(\mathrm{d}y) \Pb\left((X^{\alpha})^{R(y)}\in \mathrm{d}x\right) \\
    &=  \int_{\R_+} \frac{\mathrm{d}r}{2\sqrt{2\pi r^3}} \int_{\mathscr{X}} f(x) \, \Pi_r[g] \,\Pb\left((X^{\alpha})^{r}\in \mathrm{d}x\right).
\end{align*}
Now, decomposing on the value of $X^{\alpha}(r)$ yields 
\[\int_{U} f(x)g(y) \, \n^{\alpha}_+(\mathrm{d}x,\mathrm{d}y) = \int_{\R_+} \frac{\mathrm{d}r}{2\sqrt{2\pi r^3}} \int_{\R} \mathrm{d}z \, p^{\alpha}_r(z)  \Pi_r[g] \, \Ebb{\alpha, r}{0}{z} \left[f\right].\]
Using scale invariance, we have $p_r^{\alpha}(z)= r^{-1/\alpha}p_1^{\alpha}(r^{-1/\alpha}z)$. We finally perform the change of variables $v(r)=r/|z|^{\alpha}$ to get
\[\int_{U} f(x)g(y) \, \n^{\alpha}_+(\mathrm{d}x,\mathrm{d}y) = \int_{\R} \frac{\mathrm{d}z}{|z|^{1+\alpha/2}} \int_{\R_+} \mathrm{d}v \, \frac{p^{\alpha}_1(\sgn{z}v^{-1/\alpha})}{2\sqrt{2\pi}v^{3/2+1/\alpha}}  \Ebb{\alpha, v|z|^{\alpha}}{0}{z} \left[f\right] \Pi_{v|z|^{\alpha}}[g].\]
The constants $C_+$ and $C_-$ are then the normalising constants needed for $\gtilde_z^{\alpha}$ to be a probability measure.
\end{proof}

\subsection{Slicing excursions above levels}
We present the point of view that we will be interested in. We aim at describing a branching structure that shows up when slicing excursions at heights.

\bigskip
\noindent\textbf{Excursions above levels.}
We recall the following constructions from \cite{AD}. Let $u=(x,y)\in U^+$. For $a\ge0$, the set
\[\mathcal{I}(a) = \left\{s\in [0,R(u)], \; y(s)>a\right\},\] 
is a countable (possibly empty) union of disjoint open intervals, and for any such interval $I=(i_-,i_+)$, we write $u_{I}(s) := u(i_- +s)-u(i_-), 0\leq s\leq i_+ -i_-,$ for the restriction of $u$ to $I$, and $\Delta u_I = x(i_+)-x(i_-)$. We call $\Delta u_I$ the \emph{size} or \emph{length} of $u_I$, which may be negative. For $z=u(t)$, $0\leq t\leq R(u)$, and $0\leq a< \Im(z)$, we define $e_a^{(t)} = u_I$, where $I$ is the unique open interval in the above partition of $\mathcal{I}(a)$ containing $t$.

Moreover, set
\begin{align*}
u^{t, \leftarrow}
&:=
\left(u(t-s)-u(t)\right)_{0\leq s\leq t},
\\
u^{t,\rightarrow}
&:=
\left(u(t+s)-u(t)\right)_{0\leq s\leq R(u)-t}. 
\end{align*}
Define 
\[F^{(t)}:a\in[0, \Im(z)]\mapsto \Delta e_a^{(t)}=u^{t,\rightarrow}(T_a^{t,\rightarrow}) - u^{t,\leftarrow}(T_a^{t,\leftarrow}), \]
where
\[
T_a^{t,\leftarrow} 
:= 
\inf\left\{s\geq 0, \; y(t-s)=a\right\}
\quad \text{and} \quad
T_a^{t,\rightarrow} 
:= 
\inf\left\{s\geq 0, \; y(t+s)=a\right\}.
\]
Observe that for $\n_+^{\alpha}$--almost every excursion, $F^{(t)}$ is right-continuous on $(0,y(t)]$ for all $0\le t\le R(u)$ (use Lemma 4.8, Chapter 0 of \cite{RY}, and the fact that, under $\n_+^{\alpha}$, discontinuities of the real part and local minima of the imaginary part never occur at the same time).

\bigskip
\noindent \textbf{Loops above levels.}
As in \cite{AD}, \cref{Bismut} enables to prove that excursions under $\n^{\alpha}_+$ have no loop above any level.

\begin{Prop} \label{loop}
Let 
\[\mathscr{L} := \{u\in U^+, \; \exists 0\leq t\leq R(u), \; \exists 0\leq a<y(t), \; \Delta e_a^{(t)}(u) = 0\},\]
be the set of excursions $u$ having a loop remaining above some level $a$.
Then $\n^{\alpha}_+\left(\mathscr{L}\right) = 0$. 
\end{Prop}
\begin{proof}
We repeat the arguments of \cite[Proposition 2.7]{AD} for the sake of completeness. We first prove the result under $\overline{\n^{\alpha}_+}$, namely
\[\overline{\n^\alpha_+}\left(\{(t,u)\in \R_+\times U^+, \;  \; \exists 0\leq a<y(t), \; \Delta e_a^{(t)}(u) = 0\}\right) = 0.\]
Bismut's description of $\overline{\n^\alpha_+}$ gives
\begin{align*}
&\overline{\n^\alpha_+}\left(\{(t,u)\in \R_+\times U^+, \;  \; \exists 0\leq a<y(t), \; \Delta e_a^{(t)}(u) = 0\}\right) \\
&= \overline{\n^\alpha_+}\left(\{(t,u)\in \R_+\times U^+, \;  \; \exists 0\leq a<y(t), \; u^{t,\rightarrow}(T_a^{t,\rightarrow}) = u^{t,\leftarrow}(T_a^{t,\leftarrow})\}\right) \\
&= \int_0^{\infty} \mathrm{d}\alpha \, \Pb\left(\exists 0< a\le \alpha, \, X^1(T^1_{a}) = -X^2(T^2_{a})\right),
\end{align*}
where $X^1$ and $X^2$ are independent copies of $X^\alpha$, and $T^1_{a}$ and $T^2_{a}$ are hitting times of $a$ of independent Brownian motions. Using for example Section 4, Chap. III of \cite{RY}, $X^1(T^1_{a})$ and $X^2(T^2_{a})$ are independent $\theta$--stable Lévy processes with $\theta=\alpha/2$, and therefore $X^1(T^1_{a}) + X^2(T^2_{a})$ is again a $\theta$--stable process. Since $\theta<1$, points are polar for $X^1(T^1_{a}) + X^2(T^2_{a})$ (see \cite{Ber}, Chap. II, Section 5), so that $\Pb\left(\exists 0< a\le \alpha, \, X^1(T^1_{a}) = X^2(T^2_{a})\right)=0$. This proves our claim under $\overline{\n^\alpha_+}$. 

To prove the result under $\n^\alpha_+$, notice that if $u\in \mathscr{L}$, then the set of $t$'s satisfying the definition of $\mathscr{L}$ has positive Lebesgue measure (it contains all the times until the loop comes back to itself). This translates into 
\[\mathscr{L} \subset \left\{u\in U^+, \, \int_{0}^{R(u)} \mathds{1}_{\{\exists 0\leq a<y(t), \; \Delta e_a^{(t)}(u) = 0\}}\mathrm{d}t >0\right\}.\]
But, by the first step of the proof, 
\[\n^\alpha_+\left(\int_{0}^{R(u)} \mathds{1}_{\{\exists 0\leq a<y(t), \; \Delta e_a^{(t)}(u) = 0\}}\mathrm{d}t\right) = 0.\]
Thus $\displaystyle \int_{0}^{R(u)} \mathds{1}_{\{\exists 0\leq a<y(t), \; \Delta e_a^{(t)}(u) = 0\}}\mathrm{d}t=0$ for $\n^\alpha_+-$almost every excursion, and finally $\n^\alpha_+(\mathscr{L})=0$.
\end{proof}

\bigskip
\noindent \textbf{The locally largest excursion.}
Following the strategy of \cite{AD}, Proposition 2.8, one can establish the existence of a unique time on the excursion corresponding to the \emph{locally largest} excursion.
\begin{Prop} \label{locally largest prop}
For $u \in U^+$ and $0\le t \le R(u)$, let
\[
S(t) := \sup\left\{ a\in [0,y(t)], \quad \forall \, 0\leq a'\leq a, \; \big|F^{(t)}(a')\big| \geq \big|F^{(t)}(a'^-)-F^{(t)}(a')\big|\right\},
\]
and $S := \underset{0\leq t\leq R(u)}{\sup} S(t)$. For almost every $u$ under $\n^{\alpha}_+$, there exists a unique $0\leq \tb\leq R(u)$ such that $S(\tb) = S$. Moreover, $S=\Im(\zb)$ where $\zb=u(\tb)$.
\end{Prop}

\begin{figure}[h] 
\begin{center}
\includegraphics[scale=0.75]{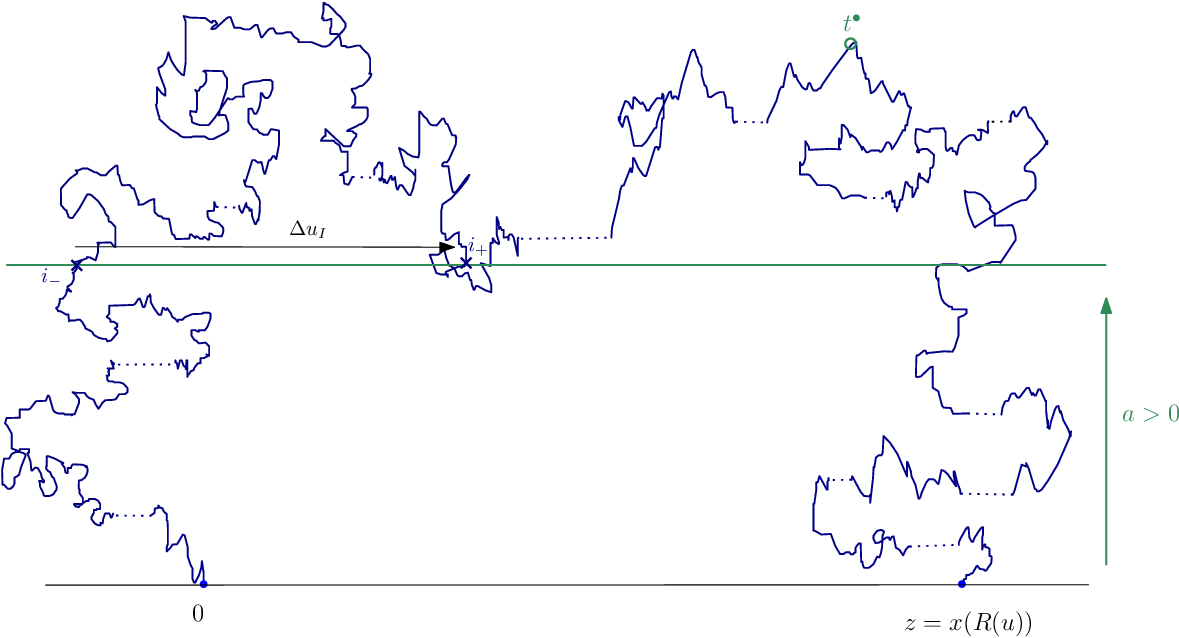}
\end{center}
\caption{Drawing of an excursion in the upper half-plane $\Hb$ and the locally largest excursion.}
\label{fig:excursion}
\end{figure}

By definition of $S$, $\big(e_a^{(\tb)}\big)_{0\leq a\leq \Im(\zb)}$ follows the largest excursion at each dislocation:  at any level $a$ where $F^{(\tb)}$ has a jump, the size $F^{(\tb)}(a)$ of the excursion $e_a^{(\tb)}$ is larger (in absolute value) than the length $F^{(\tb)}(a^-)-F^{(\tb)}(a)$ of the other excursion. For this reason we refer to $e_a^{(\tb)}$ as the \emph{locally largest excursion} at height $a$, and to $\left(\Xi(a) =\Delta e_a^{(\tb)}\right)_{0\leq a\leq \Im(\zb)}$ as the \emph{locally largest fragment}. See Figure \ref{fig:excursion}.

\begin{proof}
The proof being fairly technical, we just draw up a list with the main arguments. The reader interested in the details of the proof can get more information in \cite[Proposition 2.8]{AD}.

First, we prove that $S$ is attained. For this, we take a convergent sequence $(t_n,n\ge 1)$ of times such that $S(t_n)\rightarrow S$, and we denote by $t^\bullet$ the limit of $(t_n,n\ge 1)$. For any $a<S$, there exists $N\ge 1$ such that, for all $n\ge N$, $e_a^{(t^\bullet)} = e_a^{(t_n)}$. This implies that, up to $a$, $e^{(t^\bullet)}$ follows the locally largest excursion, \textit{i.e.} $S(t^\bullet)>a$. Hence $S(t^\bullet)\ge S$, and by maximality of $S$, $S(t^\bullet)=S$.

Now we want to prove that $S=y(t^\bullet)$. We claim that, for $\n^{\alpha}_+$--almost every excursion, and $0\le t\le R(u)$, the set
\begin{equation}\label{eq: A(t) open}
A(t) := \left\{ a\in [0,y(t)], \quad \forall \, 0\leq a'\leq a, \; \big|F^{(t)}(a')\big| \geq \big|F^{(t)}(a'^-)-F^{(t)}(a')\big|\right\},
\end{equation}
is open in $[0,y(t)]$. This follows from the right-continuity of $F^{(t)}$: if $a\in A(t)$, $a<y(t)$ and $F^{(t)}(a)\ne 0$ (which happens $\n^{\alpha}_+$--almost everywhere, see \cref{loop} above), then by right-continuity one can find a neighbourhood of $a$ where the inequality in \eqref{eq: A(t) open} holds. We then argue by contradiction: assume that $S<y(t^\bullet)$. Since $A(t^\bullet)$ is open, we have $A(t^\bullet) = [0,S)$. This means that, at level $S$, the reverse inequality to \eqref{eq: A(t) open} holds, that is:
\[
\big|F^{(t^\bullet)}(S)\big| < \big|F^{(t^\bullet)}(S^-)-F^{(t^\bullet)}(S)\big|.
\]
This implies that $F^{(t^\bullet)}$ has a jump at time $S$. Considering the excursion above $S$ which is detached from the one straddling $t^\bullet$, we get a set of times $t$ for which $e^{(t)}_a = e^{(t^\bullet)}_a$ for $a<S$, so that $S(t)\ge S$, but actually also $S\in A(t)$. Indeed, since $t^\bullet$ does not correspond to the locally largest fragment at height $S$, $t$ should (dislocation into two excursions with equal sizes is a $\n^{\alpha}_+$--negligible event). We conclude by the fact that $A(t)$ is open that $S(t)>S$ hence a contradiction.

Finally, the uniqueness statement can be proved also by contradiction. Assume the existence of two suitable times $t<t'$. Then, by considering the first height above which the excursions straddling $t$ and $t'$ differ, one sees that only one of them can correspond to the locally largest one (again owing to the fact that dislocation into two excursions with equal sizes is $\n^{\alpha}_+$--negligible). Hence the contradiction.
\end{proof}

\subsection{The branching property and a key many-to-one formula} \label{sec: key formula}
We here consider the path $u^{<a}$ obtained from $u$ under $\n^{\alpha}_+$ after removing the excursions above $a$, and closing up the time gaps. This can be defined formally as $u^{< a}_t:=u_{\tau^{< a}_t}$ if $t< A(R(u))$ and $u^{< a}_t:= u(R(u))$ if $t=A(R(u))$  where
\begin{equation} \label{eq: A_t time change}
A(t):= \int_0^t \mathds{1}_{\{y(s) \leq a \}}\mathrm{d}s \qquad \hbox{and} \qquad  \tau^{< a}_t:= \inf\{s> 0\,:\, A(s)>t\}.
\end{equation}
We call $\mathcal{G}_a$ the $\sigma$-field generated by $u^{< a}$, completed with the $\n^\alpha_+$--negligible sets, that is the $\sigma$--field carrying all the information about the trajectory \emph{below level $a$}. 

We now let $T_a:=\inf\{s>0, \; y(s)=a\}$ and we argue on the event that $T_a<\infty$. Let $(L_t^a)_{t\in [0,R(u)]}$ (resp.  $(\tau^{a}_s)_{s\in [0,L^a_{R(u)}]}$) be the (resp. inverse) local time process of $u$ at level $a$ and let $(e_s^a,\, s\in(0,L_{R(u)}^a))$ be the excursion process at level $a$ of $u$. It will be convenient to define $e_0^a$ and $e^a_{L_{R(u)}^a}$ respectively as the first and last parts of the trajectory $u$ between $\{\Im(z)=0\}$ and $\{\Im(z)=a\}$. Remark that, as a consequence of the strong Markov property under $\n^\alpha_+$ at time $T_a$ (cf. \cref{Markov under n}), on the event $T_a<\infty$ and conditionally on $\Fcal_{T_a}$, $(e_s^a,\, s\in(0,L_{R(u)}^a))$ forms a Poisson point process with intensity $\n^\alpha_+$ for the filtration $(\Fcal_{\tau_s^{a}}, 0\le s\le L^a_{R(u)})$, stopped at the first time when an excursion hits $\{\Im(z)=-a\}$.

For $a\ge 0$, we will write $(e_i^{a,+})_{i\ge 1}$ for the possible excursions that $u$ makes above $a$, ranked by descending order of their sizes $(z_i^{a,+})_{i\ge 1}$.

\begin{Prop} \label{branching gamma}(Branching property under $\gamma^\alpha_z$)

\noindent Let $z\in\R\setminus \{0\}$. For any $A\in \Gcal_a$, and for all nonnegative measurable functions $G_1,\ldots,G_k:U^+\rightarrow \R_+$, $k\ge 1$,
\[\gamma^\alpha_z\left(\mathds{1}_{\{T_a<\infty\}}\mathds{1}_A \prod_{i=1}^k G_i(e^{a,+}_i)\right) = \gamma^\alpha_z\left(\mathds{1}_{\{T_a<\infty\}}\mathds{1}_A \prod_{i=1}^k \gamma^\alpha_{z^{a,+}_i}[G_i]\right).\]
\end{Prop}

\begin{proof} 
We only sketch the proof under $\n_+^\alpha$: going from $\n_+^\alpha$ to $\gtilde^\alpha_z$ is then a technical step relying on disintegration over $z$ and some continuity argument (the reader can find more details for the Brownian case in \cite{AD}). We know from Lemma \ref{Markov under n} that on the event $\{T_a <\infty\}$, the trajectory $u$ after time $T_a$ is that of $(X^\alpha,Y)$ killed when hitting the line $\{\Im(z) = -a\}$. The Markov property at time $T_a$ and excursion theory entail that given the excursions below $a$, the excursions above $a$ form a Poisson point process on $U^+$ with intensity $L_{R(u)}^a \, \n^\alpha_+({\rm d} u)$. Conditioned on the sizes $(z_i^{a,+})_{i\ge 1}$, these excursions are independent with law $\gamma^\alpha_{z_i^{a,+}}$. The claim follows since $\Gcal_a$ is generated by the excursions below $a$ and the process of the sizes of the excursions above $a$.
\end{proof}

We call Bessel-stable (\emph{resp.} dual Bessel-stable) process a process in the upper half-plane whose real part is a copy of $X^\alpha$ (\emph{resp.} $-X^\alpha$) and whose imaginary part is an independent three-dimensional Bessel process starting at $0$. Under $\Pb$, let ${\mathfrak h}_1$ and ${\mathfrak h}_2={\mathfrak h}_2^z$ be respectively two independent Bessel-stable and dual Bessel-stable processes starting from ${\mathfrak h}_1(0)=0$ and ${\mathfrak h}_2(0)=z$. Define the analogues of \eqref{eq: A_t time change}, 
\begin{equation} \label{eq: A_i(t) time change}
A_i(t):= \int_0^t \mathds{1}_{\{{\Im(\mathfrak h}_i(s)) \leq a \}}\mathrm{d}s, \quad  \tau_i(t):= \inf\{s> 0\,:\, A_i(s)>t\}, \quad \text{for} \; i\in\{1,2\},
\end{equation}
and also $S_i^a := \sup\{t\ge 0\,:\, \added{\Im({\mathfrak h}_i(t))}\le a\}$ for the last passage time at $a$ of ${\mathfrak h}_i$, $i\in\{1,2\}$.

The following \emph{key formula} is a kind of \emph{many-to-one} formula for excursions cut at heights, and will be crucial in the rest of the paper. Write $V$ for the set of finite planar trajectories with càdlàg real part and continuous imaginary part. Recall that $(\tau^{a}_s)_{s\in [0,L^a_{R(u)}]}$ denotes the inverse local time at level $a$, and set
\begin{align*}
{\mathfrak{u}}_1^s &:= \Big( u(t), \, t\in [0, \tau^a_{s^-}]\Big),
 \\ 
{\mathfrak{u}}_2^s &:= \Big(u(R(u)-t),\, t \in [0, R(u)-\tau^a_s] \Big).
 \end{align*}
Then ${\mathfrak{u}}_1$ and ${\mathfrak{u}}_2$ are elements of $V$ which stand respectively for the trajectory of $u$ before the excursion $e^{a}_s$ and for the time-reversed trajectory of $u$ after the excursion $e^{a}_s$. We use the shorthand $s^+ \in [0, L_{R(u)}^a]$ to denote times $0\le s \le L_{R(u)}^a$ such that $e_s^{a} \in U^+$. Finally, let $\theta:=\alpha/2$.

\begin{Lem} \label{lem: key formula}
Let $F: V\times V\rightarrow \R_+$ be a nonnegative measurable function. Then 
\begin{multline} \label{eq: key formula excursions}
\gtilde^{\alpha}_z\left[\mathds{1}_{\{T_a<\infty\}}\sum_{ s^+ \in [0,L^a_{R(u)}]}  C^{-1}_{\sgn{\Delta e^{a}_s}} |\Delta e^{a}_s|^{1+\theta} F({\mathfrak u}_1^s, {\mathfrak u}_2^s) \right] \\
=
C_{\sgn{z}}^{-1}|z|^{1+\theta} \Eb\Big[F\left(({\mathfrak h}_1(t),t\in [0,S^a_1]\right),\left({\mathfrak h}_2^z(t),t\in [0,S^a_2])\right)\Big]
\end{multline}
\end{Lem}

\begin{proof} 
The proof follows the same lines as for Equation $(17)$ in \cite{AD}. We prove \eqref{eq: key formula excursions} for $F(u,v)=f(u)g(v)$, where $f,g: V\rightarrow \R_+$ are two nonnegative measurable functions. We first argue under the measure $\n^\alpha_+$. Recall from the discussion preceding \cref{branching gamma} that on the event $T_a<\infty$ and conditionally on $\Fcal_{T_a}$, the process $(e_s^a,\, s\in(0,L_{R(u)}^a))$ forms a Poisson point process with intensity $\n^\alpha_+$, stopped when an excursion hits $\{\Im(z)=-a\}$. Using the master formula \cite[Proposition XII.1.10]{RY} and the disintegration property (\cref{disintegration prop}), we therefore get
\begin{align}
&\n^\alpha_+\bigg( \mathds{1}_{\{T_a<\infty\}}\sum_{s^+ \in [0,L_{R(u)}^a]} C^{-1}_{\sgn{\Delta e^{a}_s}} |\Delta e^{a}_s|^{1+\theta} f(\mathfrak{u}_1^s) g(\mathfrak{u}_2^s)  \bigg)
 \label{beginning H excursion}\\
&= \n^\alpha_+\bigg(\mathds{1}_{\{T_a<\infty\}} \int_0^{R(u)} f\left(\mathfrak{u}_1^{L_r^a}\right)
 \mathrm{d}L_r^a \nonumber \\
 & \int_{-\infty}^{+\infty} \mathrm{d}x \, \Eb\left[ g(x+x'+X^\alpha(T^Y_{-a}-s),a+Y(T^Y_{-a}-s), 0\le s\le T^Y_{-a})\right]_{x'=X_r} \bigg), \nonumber 
\end{align}
where $T_{-a}^Y := \inf\{s>0, \; Y(s)=-a\}$ and $X_r = \Re(\mathfrak{u}_1^{L_r^a}(L_r^a))$.
The change of variables $x+X_r \mapsto x$ shows that it is also
\[
\n^\alpha_+\bigg(\mathds{1}_{\{T_a<\infty\}} \int_0^{R(u)} f\left(\mathfrak{u}_1^{L_r^a}\right)
 \mathrm{d}L_r^a \bigg)
 \int_{-\infty}^{+\infty} \mathrm{d}x \, \Eb\left[ g(x+X^\alpha(T^Y_{-a}-s),a+Y(T^Y_{-a}-s), 0\le s\le T^Y_{-a})\right].
\]
Conditionally on $Y$, $(X^\alpha_s,\, 0\le s\le T^Y_{-a})$ evolves as $X^\alpha$  stopped at time $T^Y_{-a}$. By duality for Lévy processes (see \cite[Section II.1]{Ber}), conditionally on $Y$, the "law" of $(x+X^\alpha(T^Y_{-a}-s),\, 0\le s \le T^Y_{-a})$ for $x$ sampled according to the Lebesgue measure is the "law" of $-X^\alpha$ with initial measure the Lebesgue measure, stopped at time $T^Y_{-a}$. On the other hand, the process $(a+Y(T^Y_{-a}-s), 0\le s\le T^Y_{-a})$ is a 3-dimensional Bessel process starting from $0$ and run until its last passage time at $a$, see Corollary 4.6, Chap. VII of \cite{RY}. In a nutshell, 
\[
\int_{-\infty}^{+\infty} \mathrm{d}x \, \Eb\left[ g(x+X(T^Y_{-a}-s),a+Y(T^Y_{-a}-s), 0\le s\le T^Y_{-a})\right]
=
\int_{-\infty}^{+\infty} \mathrm{d}z \, \Eb\Big[ g({\mathfrak h}_2^z(t),t\in [0,S_2^a])\Big].
\]
Moreover, by another application of the master formula,
\[
\n^\alpha_+\bigg( \mathds{1}_{\{T_a<\infty\}} f\left(\mathfrak{u}_1^{L_{R(u)}^a}\right)
  \bigg)  = \n^\alpha_+\bigg(\mathds{1}_{\{T_a<\infty\}} \int_0^{R(u)} f(\mathfrak{u}_1^{L_r^a}) \mathrm{d}L_r^a  \bigg)\n^\alpha_-(T_{-a}<\infty). 
\]

\noindent Since $\n^\alpha_-(T_{-a}<\infty)=\n^\alpha_+(T_{a}<\infty)$, this yields 
\[\n^\alpha_+\bigg(\mathds{1}_{\{T_a<\infty\}} \int_0^{R(u)} f(\mathfrak{u}_1^{L_r^a}) \mathrm{d}L_r^a  \bigg) = \n^\alpha_+\left(  f\left(\mathfrak{u}_1^{L_{R(u)}^a}\right) \mid T_a <\infty\right).\]
Now under $\n^\alpha_+(\cdot \mid T_a<\infty)$, $u$ up to its last passage time at $a$ has the law of ${\mathfrak h}_1$ up to $S_1^a$, whence
\begin{multline*}
 \n^\alpha_+\bigg( \mathds{1}_{\{T_a<\infty\}}\sum_{s^+ \in [0,L_{R(u)}^a]} C^{-1}_{\sgn{\Delta e^{a}_s}} |\Delta e^{a}_s|^{1+\theta} f(\mathfrak{u}_1^s) g(\mathfrak{u}_2^s) 
 \bigg) \\
  =
\Eb\Big[ f({\mathfrak h}_1(t),t\in [0,S_1^a]) \Big]
\int_{-\infty}^{+\infty} \mathrm{d}z\, \Eb\Big[ g({\mathfrak h}_2^z(t),t\in [0,S_2^a])\Big].
\end{multline*}

\noindent Finally, we disintegrate $\n^\alpha_+$ over $x(R(u))$ (cf. \cref{disintegration prop}) to get 
\begin{align*}
    \int_{-\infty}^{+\infty}& C_{\sgn{z}}\frac{\mathrm{d}z}{|z|^{1+\theta}} \, \gtilde^\alpha_z\left[\mathds{1}_{\{T_a<\infty\}} \sum_{s^+ \in [0,L_{R(u)}^a]} C^{-1}_{\sgn{\Delta e^{a}_s}} |\Delta e^{a}_s|^{1+\theta}f(\mathfrak{u}_1^s) g(\mathfrak{u}_2^s)
 \right] \\
 & =
\Eb\Big[  f({\mathfrak h}_1(t),t\in [0,S_1^a]) \Big]
\int_{-\infty}^{+\infty}\mathrm{d}z \, \Eb\Big[ g({\mathfrak h}_2^z(t),t\in [0,S_2^a])\Big].
    \end{align*}
By multiplying $g$ by any measurable function of $x(R(u))$, this entails that for Lebesgue-almost every $z\in \R$,
\begin{align*}
    & \gamma^\alpha_z\left[\mathds{1}_{\{T_a<\infty\}} \sum_{s^+ \in [0,L_{R(u)}^a]} C^{-1}_{\sgn{\Delta e^{a}_s}} |\Delta e^{a}_s|^{1+\theta} f(\mathfrak{u}_1^s) g(\mathfrak{u}_2^s) 
 \right] \notag \\
 & =
C_{\sgn{z}}^{-1}|z|^{1+\theta} \Eb\Big[  f({\mathfrak h}_1(t),t\in [0,S_1^a]) \Big]
 \Eb\Big[ g({\mathfrak h}_2^z(t),t\in [0,S_2^a])\Big]. 
\end{align*}
With some continuity argument, one can then prove that this holds for all $z$.
\end{proof}

\subsection{The law of the locally largest evolution} \label{sec: loc largest evolution}
Recall that $\theta=\frac{\alpha}{2}$, so that $\frac12<\theta<1$. We define $\eta^{\theta}$ under $P_z$ as the $\theta$--stable Lévy process starting at $z\in \R$. Recall that the Laplace exponent of $\eta^{\theta}$ is given by
\[
\psi_{\theta}(q) := \frac{c_+-c_-}{1-\theta}q + \int_{\R} (\mathrm{e}^{qy}-1-q\mathrm{e}^y\mathds{1}_{|y|<1}) \nu_{\theta}(y)\mathrm{d}y,
\]
where the density of the Lévy measure
\begin{equation} \label{eq:nu}
\nu_{\theta}(y):=c_{\sgn{y}} |y|^{-\theta-1},
\end{equation}
depends on constants $c_+,c_-$ such that $c_++c_->0$. An important feature is the positivity parameter $\rho:= P_0(\eta^{\theta}_1>0)$ which can be fixed by choosing $c_+,c_-$ to equal
\begin{equation} \label{eq:constants c_- and c_+}
c_-=\frac{\Gamma(1+\theta)}{\pi} \sin(\pi\theta(1-\rho)) \quad \text{and} \quad c_+=\frac{\Gamma(1+\theta)}{\pi} \sin(\pi\theta\rho).
\end{equation}
See \cite{CC} and \cite{KP}. Moreover, in order to retrieve the family \added{with no killing} introduced in \cite{BBCK} for $\theta<1$, we will in this subsection choose the following explicit constants. First, we fix $c_+, c_-$ so that $\theta\rho = 1/2$, which gives $c_+=\frac{\Gamma(1+\theta)}{\pi}$ and $c_-=-\frac{\Gamma(1+\theta)}{\pi} \cos(\pi\theta)$. \added{Notice that this implies $\alpha\in(1,2)$, which justifies our choice}. Finally, we take $X^{\alpha}$ to be an $\alpha$--stable Lévy process with positivity parameter $\rho$. It is important to note that, since $\alpha\rho=1$, $X^\alpha$ is \emph{spectrally negative}, meaning that it only has negative jumps, as can be seen from \eqref{eq:constants c_- and c_+} with $\alpha$ replacing $\theta$ (see \cite[Chapter VIII]{Ber} for more background). The process $X^\alpha$ being fixed, we now claim the following result.
\begin{Thm} \label{thm:loc largest}
Fix $z>0$. Let $\Xi = (\Xi(a), 0\le a\le \Im(\zb))$ denote the size of the locally largest fragment. Under $\gtilde^{\alpha}_z$, $(\Xi(a))_{0\le a< \Im(\zb)}$ is distributed as the positive self-similar Markov process $(Z_a)_{0\le a<\zeta}$ with index $\theta$ starting from $z$ whose Lamperti representation is 
\[Z_a = z\exp(\xi(\tau(z^{-\theta}a))), \]
where $\xi$ is the Lévy process with Laplace exponent
\begin{equation} \label{Lapl loc largest}
    \Psi(q) =  \int_{y>-\ln(2)} \left(\mathrm{e}^{q y}-1\right) \mathrm{e}^{-\theta y}\nu_{\theta}(-(e^y-1))\mathrm{d}y, \quad  q<2\theta+1,
\end{equation}
$\tau$ is the Lamperti time-change 
\[\tau(a) = \inf\left\{s\ge 0, \; \int_{0}^s \mathrm{e}^{\theta \xi(u)}\mathrm{d}u > a\right\},\]
and $\zeta = \inf\{a>0, Z_a=0\}$. 
\end{Thm}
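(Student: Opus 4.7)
The plan is to mirror the strategy of \cite{ADS} in the Brownian case $\alpha=2$, adapted to the $\alpha$-stable setting. Self-similarity follows from the scaling properties of $X^{\alpha}$ (of index $1/\alpha$) and of the Brownian excursion (of index $1/2$): applying these to the explicit form of $\gtilde^{\alpha}_z$ in Proposition \ref{disintegration prop} shows that under $\gtilde^{\alpha}_z$ the rescaled process $(c^{-1}\Xi(c^{\theta}a))_{a\ge 0}$ has the law of $\Xi$ under $\gtilde^{\alpha}_{z/c}$, giving the $\theta$-self-similarity of $\Xi$. The Markov property is obtained by conditioning on the height of $\tb$ and applying Bismut's description (Proposition \ref{Bismut}): the portions of the path above a given level $a$ on either side of $\tb$ are independent killed $(X^{\alpha},Y)$ trajectories, so the future of $\Xi$ past $a$ depends only on $\Xi(a)$. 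Combining these two properties, $(\Xi(a), 0\le a<\zeta)$ is a positive self-similar Markov process of index $\theta$ and admits a Lamperti representation $\Xi(a)=z\exp(\xi(\tau(z^{-\theta}a)))$.

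It remains to identify the Lévy exponent $\Psi$ of $\xi$. As $a$ increases, the endpoints $\tau^{\pm}(a)$ of the tagged excursion move inward, and $\Xi(a)=x(\tau^+(a))-x(\tau^-(a))$ changes both continuously and through discrete jumps arising from two sources: $\tau^{\pm}(a)$ crossing a jump time of $X^{\alpha}$, or $\tau^{\pm}(a)$ itself jumping at a topological split when $a$ crosses a local maximum of $y$. The locally largest selection keeps the piece with larger absolute value at each split; since $z_1+z_2=z>0$ together with $|z_1|\ge|z_2|$ force $z_1>0$, the process $\Xi$ remains positive, and in multiplicative coordinates $y=\log(\Xi(a)/\Xi(a^-))$ the restriction on jumps reads $y>-\ln 2$. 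Combining Bismut's description with the compensation formula and integrating over the duration of the Brownian excursions that emerge at each split (which shifts the effective index from $\alpha$ to $\theta=\alpha/2$ through the scaling of the stable process over intervals of Brownian-hitting-time length), one arrives at the Lévy density $\mathrm{e}^{-\theta y}\nu_{\theta}(-(\mathrm{e}^y-1))$ on $y>-\ln 2$.

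The main obstacle will be the explicit identification of the drift $d$ and the verification of the finiteness range $q<2\theta+1$. The exponent $\Psi$ uses the non-standard compensator $q(\mathrm{e}^y-1)$ rather than $qy\mathds{1}_{|y|<1}$, so one must carefully translate the original stable compensator and the contribution from the locally largest selection into these new coordinates. The asymmetric form $d=c_{-}B(1-\theta,2\theta)-c_{+}B_{\frac{1}{2}}(1-\theta,-\theta)$ reflects the different structure of positive and negative jumps of $\xi$: positive jumps of $\xi$ (with coefficient $c_-$, since $-(\mathrm{e}^y-1)<0$) range over all of $(0,\infty)$ and, via the successive changes of variables $u=\mathrm{e}^y-1$ and $t=u/(1+u)$, yield the complete beta $c_-B(1-\theta,2\theta)$; negative jumps of $\xi$ (with coefficient $c_+$, since $-(\mathrm{e}^y-1)>0$) range only over $(-\ln 2,0)$ because of the locally largest cutoff and, via the change of variable $t=1-\mathrm{e}^y\in(0,1/2)$, yield the incomplete beta $c_+B_{\frac{1}{2}}(1-\theta,-\theta)$. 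Finally, the range $q<2\theta+1$ comes from the tail behaviour $\mathrm{e}^{-\theta y}\nu_{\theta}(-(\mathrm{e}^y-1))\sim c_{-}\mathrm{e}^{-(2\theta+1)y}$ as $y\to+\infty$, ensuring integrability of $\mathrm{e}^{qy}$ against this density.
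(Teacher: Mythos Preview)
Your proposal outlines a direct approach — establish the Markov and self-similarity properties first, then read off the Lévy characteristics of the Lamperti exponent by inspecting the jump structure — whereas the paper takes a rather different and more computational route. The paper does not prove the Markov property or self-similarity of $\Xi$ separately. Instead, following \cite{ADS} and using Bismut's description together with the fact that $b\mapsto X^{\alpha}(T_{-b})$ is a $\theta$-stable process, it obtains the identity
\[
\gamma_z\bigl[H(\Xi(b),0\le b\le a)\mathds{1}_{a<\Im(z^{\bullet})}\bigr]
= E_{-z}\Bigl(\tfrac{|z|^{1+\theta}}{|\eta^{\theta}_a|^{1+\theta}}\,H(-\eta^{\theta}_b,0\le b\le a)\,\mathds{1}_{\mathcal{A}}\Bigr),
\]
where $\mathcal{A}=\{\forall b,\;|\eta^{\theta}_b|>|\Delta\eta^{\theta}_b|\}$. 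On $\mathcal{A}$ the process $-\eta^{\theta}$ stays positive and has the explicit Lamperti exponent $\xi^0$ of a killed stable process (Caballero--Chaumont, Kuznetsov--Pardo). The weight $|z|^{1+\theta}/|\eta^{\theta}_a|^{1+\theta}$ combined with $\mathds{1}_{\mathcal{A}}$ is then shown to be an exponential martingale for $\xi^0$, and the Esscher-type tilt $q\mapsto q-1-\theta$ together with the jump restriction $>-\log 2$ produces $\Psi$ from $\Psi^0$.

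Your sketch contains a genuine gap at the identification of $\Psi$. The phrase ``combining Bismut's description with the compensation formula and integrating over the duration of the Brownian excursions \dots\ shifts the effective index from $\alpha$ to $\theta$'' is a heuristic, not a derivation: it does not explain where the factor $\mathrm{e}^{-\theta y}$ in the Lévy density comes from. In the paper this factor is $\mathrm{e}^{y}\cdot\mathrm{e}^{-(1+\theta)y}$, the first $\mathrm{e}^{y}$ being already present in the Lévy measure of $\xi^0$ and the second arising precisely from the disintegration weight $|\eta^{\theta}_a|^{-(1+\theta)}$ acting as an Esscher transform. Your proposal never isolates this weight, so the mechanism producing the tilt is missing. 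Likewise, your account of the drift $d$ is a post-hoc reading of the final formula rather than a computation: the value of $d$ is not the integral of $(\mathrm{e}^y-1)$ against the jump density on each half-line, but the linear coefficient left over after shifting $\Psi^0$ by $-(1+\theta)$ and removing the small jumps, which the paper computes explicitly. Finally, you do not check that $\Psi(0)=0$; in the paper this is the identity $\mathbf{k}'=\Psi''(\infty)$, which is nontrivial and uses the hypergeometric closed form $\Psi^0(q)=-\Gamma(\theta-q)\Gamma(1+q)/\bigl(\Gamma(\tfrac12-q)\Gamma(\tfrac12+q)\bigr)$ evaluated at $q=-1-\theta$.
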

\begin{Rk} \label{rk: loc largest negative}
One can give a similar description, starting from a negative $z<0$, for the locally largest evolution (which gives a negative cell process). In this case one would obtain a killing parameter in \eqref{Lapl loc largest}.
\end{Rk}
The remainder of this subsection is mostly devoted to the proof of Theorem \ref{thm:loc largest}. We start by recalling the main ingredients of the proof of Theorem 3.5 in \cite{AD}. Let $H$ be a bounded continuous nonnegative function defined on the space of finite càdlàg paths, and consider $a\ge 0$. On the event $\{a<\Im(\zb)\}$, we can write
\[
H(\Xi(b),\, b \in [0,a])
=
\sum_{s^+ \in [0,L_{R(u)}^a]} C^{-1}_{\sgn{\Delta e^{a}_s}}|\Delta e^{a}_s|^{1+\theta}F({\mathfrak u}_1^s, {\mathfrak u}_2^s), 
\]

\noindent where
\begin{equation}\label{eq:defF}
F({\mathfrak u}_1^s, {\mathfrak u}_2^s) = C_{\sgn{\Delta e^{a}_s}}|\Delta e^{a}_s|^{-1-\theta} H(\Xi(b),\, b \in [0,a]) \mathds{1}_{\{ e_a^{(\tb)} =e_s^a \}}.
\end{equation}

\noindent Note that the right-hand side is indeed a function of $({\mathfrak u}_1^s,{\mathfrak u}_2^s)$ since $(\Delta e_b^{(\tb)},\, b\in [0,a])$ is a measurable function of $u^{<a}$, and hence of $({\mathfrak u}_1^s,{\mathfrak u}_2^s)$. Apply the key formula \eqref{eq: key formula excursions}:
\begin{equation} \label{eq: key formula local largest}
 \gamma^\alpha_z\left[ H(\Xi(b),\, b \in [0,a])
 \mathds{1}_{\{a<\Im(\zb)\}} \right]  
=
C^{-1}_{\sgn{z}}|z|^{1+\theta} \Eb\Big[F\left(({\mathfrak h}_1(t),t\in [0,S^a_1]\right),\left({\mathfrak h}_2^z(t),t\in [0,S^a_2])\right)\Big].
\end{equation}

Now let $\eta$ be the process defined by $\eta_b:={\mathfrak h}_2(S_2^b) - {\mathfrak h}_1(S_1^b)$ for $b\ge 0$. For $i\in\{1,2\}$, the process $\Re( {\mathfrak h}_i(S_i^b)),\, b\ge 0$ is a $\theta$--stable process (using Corollary VII.4.6 and then Section III.4 of \cite{RY}). More precisely $\eta$ is a (c\`adl\`ag) $\theta$--stable process with positivity parameter $\rho'=1-\rho$, that is $\eta$ has law $\widetilde{P}_z$ started from $z$ given by the law of $-\eta^\theta$ under $P_{-z}$. Let $\Delta \eta_b := \eta_b- \eta_{b^-}$ stand for the jump of $\eta$ at time $b$. By definition of $F$ in \eqref{eq:defF}, we have
\[
F\left(({\mathfrak h}_1(t),t\in [0,S^a_1]\right),\left({\mathfrak h}_2^z(t),t\in [0,S^a_2])\right)
=
C_{\sgn{\eta_a}}|\eta_a|^{-1-\theta} H(\eta_b,\, b \in [0,a]) \mathds{1}_{ \forall \, b  \in [0,a], \; |{\eta}_b| \geq  |\Delta {\eta}_{b}| }.
\]
Finally, \eqref{eq: key formula local largest} rewrites
\[
\gamma^{\alpha}_z \left[H(\Xi(b),0\le b\le a)\mathds{1}_{a<\Im(z^{\bullet})}\right]
= 
\widetilde{E}_z\left(\frac{C_{\sgn{\eta_a}}}{C_{\sgn{z}}} \frac{|z|^{1+\theta}}{|\eta_a|^{1+\theta}}H(\eta_b, 0\le b\le a)\mathds{1}_{\forall 0\le b\le a, \; |\eta_b|\ge |\Delta \eta_b|}\right).
\]
Observe that under $\widetilde{P}_z$, $z>0$, on the event $E_a := \{\forall 0\le b\le a, \; |\eta_b|\ge |\Delta \eta_b|\}$, $\eta$ remains positive until time $a$. Therefore, the previous display simplifies to
\begin{equation} \label{eq: loc largest Radon-Nikodym}
\gamma^{\alpha}_z \left[H(\Xi(b),0\le b\le a)\mathds{1}_{a<\Im(z^{\bullet})}\right]
= 
\widetilde{E}_z\left(\frac{|z|^{1+\theta}}{|\eta_a|^{1+\theta}}H(\eta_b, 0\le b\le a)\mathds{1}_{\forall 0\le b\le a, \; |\eta_b|\ge |\Delta \eta_b|}\right).
\end{equation}
Furthermore, on the same event, $\eta$ can be written using the Lamperti representation of a $\theta$--stable Lévy process killed when entering the negative half-line, found in \cite{CC} (although we will take the form presented in \cite{KP}). More precisely, on this event, under $\widetilde{P}_z$ we can write $\eta_b = z\mathrm{e}^{\xi^0(\tau^0(b))}$, where 
\[
\tau^0(b):=\inf\left\{s\ge 0, \; \displaystyle{\int_0^s z^{\theta}\mathrm{e}^{\theta\xi^0(u)}\mathrm{d}u}\ge b\right\}=\displaystyle{\int_0^{b} \frac{\mathrm{d}s}{(\eta_s)^{\theta}}},
\]
and $\xi^0$ is a Lévy process with Laplace exponent
\begin{equation} \label{eq: Lapl exponent killed}
\Psi^0(q):= -\frac{c_+}{\theta} + \int_{\R^*} (\mathrm{e}^{qy}-1)\mathrm{e}^{y}\nu_{\theta}(-(\mathrm{e}^y-1))\mathrm{d}y, \quad -1<q< \theta.
\end{equation}
Note that compared to \cite{KP} we have inverted the role of the constants $c_+$ and $c_-$, because $\eta$ has the law of $-\eta^\theta$. Furthermore, observe that in this correspondence, the event $E_a$ is $\{\forall 0\le b\le \tau^0(a), \; \Delta \xi^0(b)>-\log(2)\}$. Thus Theorem \ref{thm:loc largest} is proved as soon as we have established the following lemma. 
\begin{Lem} \label{lem: martingale loc largest}
Under $\widetilde{P}_{z}$, the process
\[
M^{(\theta)}_a:= |z|^{1+\theta} \mathrm{e}^{-(1+\theta)\xi^0(a)} \mathds{1}_{\forall 0\le b\le a, \; \Delta \xi^0(b)>-\log(2)}, \quad a\ge 0,
\]
is a martingale with respect to the canonical filtration of $\xi^0$. Under the tilted probability measure $M^{(\theta)}_a\cdot \widetilde{P}_{z}$, the process $(\xi^0(b), 0\le b\le a)$ is a Lévy process with Laplace exponent $\Psi$ given by equation \eqref{Lapl loc largest}.
\end{Lem}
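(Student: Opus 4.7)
The plan is to view $M^{(\theta)}$ as a standard exponential/Esscher-type martingale for the Lévy process $\xi^0$, with the indicator compensating for the fact that the tilting parameter $-(1+\theta)$ lies outside the domain $(-1,\theta)$ of $\Psi^0$ in \eqref{eq: Lapl exponent killed}.

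\textbf{Step 1 (decomposition of $\xi^0$).} Read off from \eqref{eq: Lapl exponent killed} that the Lévy measure of $\xi^0$ is $\pi(\mathrm{d}y)=\mathrm{e}^y\nu_\theta(-(\mathrm{e}^y-1))\mathrm{d}y$, and split $\xi^0=\xi^{0,>}+\xi^{0,<}$ into two independent Lévy processes with Lévy measures $\pi_>=\pi\mathds{1}_{y>-\log 2}$ and $\pi_<=\pi\mathds{1}_{y\le-\log 2}$. Using $\nu_\theta(x)=c_{\mathrm{sgn}(x)}|x|^{-\theta-1}$, the measure $\pi_<$ has finite total mass $q_*:=\pi_<(\R)=c_+\int_0^{1/2}(1-u)^{-\theta-1}\mathrm{d}u=\tfrac{c_+}{\theta}(2^\theta-1)$, so $\xi^{0,<}$ is compound Poisson. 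Letting $T$ be its first jump time, $T\sim\mathrm{Exp}(q_*)$ is independent of $\xi^{0,>}$, the event $\{\forall b\le a,\,\Delta\xi^0(b)>-\log 2\}$ coincides with $\{T>a\}$, and on this event $\xi^0(a)=\xi^{0,>}(a)$.

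\textbf{Step 2 (martingale property).} The jumps of $\xi^{0,>}$ are bounded below by $-\log 2$ and, thanks to $\pi_>(\mathrm{d}y)\sim c_-\mathrm{e}^{-\theta y}\mathrm{d}y$ at $+\infty$, its Laplace exponent $\tilde\psi$ is finite at $\lambda=-(1+\theta)$. The classical exponential martingale $\mathrm{e}^{-(1+\theta)\xi^{0,>}(a)-\tilde\psi(-(1+\theta))a}$, combined with the Watanabe martingale $\mathrm{e}^{q_* a}\mathds{1}_{T>a}$, yields that $\mathrm{e}^{(q_*-\tilde\psi(-(1+\theta)))a}\cdot \mathrm{e}^{-(1+\theta)\xi^0(a)}\mathds{1}_{T>a}$ is a martingale. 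The martingale property of $M^{(\theta)}_a$ therefore reduces to the identity $\tilde\psi(-(1+\theta))=q_*$. This is a direct computation: splitting the integral in \eqref{eq: Lapl exponent killed} at $-\log 2$ and substituting $u=1-\mathrm{e}^y$ on the two pieces, the contribution of $\Psi^0(-(1+\theta))$ at the compound-Poisson piece (together with the killing term $-c_+/\theta$ and the drift $\tfrac{c_--c_+}{1-\theta}$) collapses to $q_*$.

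\textbf{Step 3 (identification after tilting).} Once $M^{(\theta)}$ is known to be a martingale, the measure $|z|^{-(1+\theta)}M^{(\theta)}_a\cdot P_{-z}$ is obtained from $P_{-z}$ by first conditioning on $\{T>a\}$ (which removes the jumps of $\xi^{0,<}$) and then performing an Esscher tilt of $\xi^{0,>}$ at parameter $-(1+\theta)$. By the standard Esscher formula, $\xi^0=\xi^{0,>}$ is then a Lévy process with Laplace exponent $q\mapsto\tilde\psi(q-(1+\theta))-\tilde\psi(-(1+\theta))$. Its new Lévy measure is $\mathrm{e}^{-(1+\theta)y}\pi_>(\mathrm{d}y)=\mathds{1}_{y>-\log 2}\,\mathrm{e}^{-\theta y}\nu_\theta(-(\mathrm{e}^y-1))\mathrm{d}y$, matching precisely the jump part of $\Psi$ in \eqref{Lapl loc largest}. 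Absorbing the compensator $q(\mathrm{e}^y-1)\mathds{1}_{|\mathrm{e}^y-1|<1}$ inherited from $\Psi^0$ into a pure drift, the two resulting integrals over $y\in(-\log 2,0)$ and $y\in(0,+\infty)$ become incomplete/complete beta functions after the substitution $u=1-\mathrm{e}^y$ on one side and $u=\mathrm{e}^{-y}$ on the other, and they add up to $d=c_-B(1-\theta,2\theta)-c_+B_{1/2}(1-\theta,-\theta)$.

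\textbf{Main obstacle.} The conceptual structure is entirely captured by Step 1, but the real work lies in the two special-function identities: verifying $\tilde\psi(-(1+\theta))=q_*$ (needed for the martingale property) and identifying the drift $d$ with the announced combination of (incomplete) beta functions after the Esscher tilt. Both are elementary in principle---just the substitution $u=\mathrm{e}^y$ or $u=1-\mathrm{e}^y$---but bookkeeping of the indicators $\mathds{1}_{|\mathrm{e}^y-1|<1}$ and of the compensator rearrangements is what makes them technical.
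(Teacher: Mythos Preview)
Your approach is essentially the paper's: both split $\xi^0=\xi'+\xi''$ at the jump threshold $-\log 2$, reduce the martingale property to the single identity $\Psi'(-(1+\theta))=q_*$ (your $\tilde\psi(-(1+\theta))=q_*$), and then read off the tilted Laplace exponent via an Esscher transform of $\xi'$.

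The one substantive difference is in how that key identity is verified. You call it a ``direct computation'' by substituting $u=1-\mathrm e^y$ in the integral of \eqref{eq: Lapl exponent killed}, but note that $-(1+\theta)\notin(-1,\theta)$, so the expression \eqref{eq: Lapl exponent killed} for $\Psi^0$ is not literally valid there; one must work with $\tilde\psi$ directly or continue analytically. The paper does \emph{not} compute the integral by hand: it invokes the closed hypergeometric form $\Psi^0(q)=-\Gamma(\theta-q)\Gamma(1+q)/\bigl(\Gamma(\tfrac12-q)\Gamma(\tfrac12+q)\bigr)$ from \cite{KP}, continues it to $q=-(1+\theta)$, and matches it against $\Psi''(-1-\theta)$ via the analytically continued identity $B_{1/2}(q,q)=\Gamma(q)^2/(2\Gamma(2q))$. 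Your route is in principle feasible, but the bookkeeping you flag in your ``Main obstacle'' is exactly where the paper's shortcut through \cite{KP} pays off. The drift identification (your Step~3) is handled by both of you in the same way, as explicit (incomplete) beta integrals after the substitution $x=\mathrm e^y$.
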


\noindent Indeed, the result then follows by a simple application of the optional stopping theorem. See also Lemma 17 in \cite{LGR} and Lemma 3.6 in \cite{AD}. We include the arguments for completeness. Rephrasing \eqref{eq: loc largest Radon-Nikodym}, we have obtained
\[
\gamma^\alpha_z[H(\Xi(b),\, 0\le b \le a)\mathds{1}_{\{a<\Im(\zb)\}}]
=
\widetilde{E}_z \left[  M^{(\theta)}_{\tau^0(a)} H\left(z \exp(\xi^0(\tau^0(b))),\, 0\le b\le a\right) \right].
\]

\noindent By the optional stopping theorem, for any $c>0$,
\begin{align*}
\widetilde{E}_z &\left[ M^{(\theta)}_{\tau^0(a)} H\left(z \exp(\xi^0(\tau^0(b))),\, b\in [0,a]\right) \mathds{1}_{\{c>\tau^0(a)\}}\right] \\
&=
\widetilde{E}_z \left[ M^{(\theta)}_{c} H\left(z \exp(\xi^0(\tau^0(b))),\, b\in [0,a]\right)\mathds{1}_{\{c>\tau^0(a)\}} \right].
\end{align*}

\noindent By \cref{lem: martingale loc largest}, the right-hand side is
\[
\widetilde{E}_z  \left[ H\left(z \exp(\xi(\tau(b))),\, b\in [0,a]\right)\mathds{1}_{\{c>\tau(a)\}} \right],
\]
where $\xi$ is the Lévy process with Laplace exponent $\Psi$ appearing in \cref{thm:loc largest}. It remains to take $c\to \infty$ and use dominated convergence to conclude the proof of \cref{thm:loc largest}. We now turn to proving \cref{lem: martingale loc largest}.

\begin{proof}[Proof of \cref{lem: martingale loc largest}]
By self-similarity, we may focus on the case $z=1$, in which case we write $P=\widetilde{P}_{1}$ for simplicity. We aim at computing the quantity
\[
E\left[\mathrm{e}^{(q-1-\theta)\xi^0(a)} \mathds{1}_{\forall 0\le b\le a, \; \Delta \xi^0(b)>-\log(2)} \right].
\]
To do this, we write $\xi^0(b)=\xi'(b)+\xi''(b)$, where $\xi''(b):=\sum_{0\le b\le a} \Delta \xi^0(b) \mathds{1}_{\Delta \xi^0(b)\le -\log(2)}$ is the Poisson point process of the small jumps of $\xi^0$. Then $\xi'$ and $\xi''$ are independent, and so the previous expectation is 
\[
E\left[\mathrm{e}^{(q-1-\theta)\xi^0(a)} \mathds{1}_{\forall 0\le b\le a, \; \Delta \xi^0(b)>-\log(2)} \right]
=
P(\xi''(a)=0)E[\mathrm{e}^{(q-1-\theta)\xi'(a)}].
\]
If we denote by $\Psi'$ and $\Psi''$ the Laplace exponents of $\xi'$ and $\xi''$, then we have 
\[
E\left[\mathrm{e}^{(q-1-\theta)\xi^0(a)} \mathds{1}_{\forall 0\le b\le a, \; \Delta \xi^0(b)>-\log(2)} \right]
=
\mathrm{e}^{a(\Psi''(\infty)+\Psi'(q-1-\theta))}.
\]
Therefore the calculation boils down to computing $\Psi''(\infty)+\Psi'(q-1-\theta)$. First of all, we know that the Lévy measure of $\xi''$ is the one of $\xi^0$ restricted to $(-\infty,-\log(2)]$, so that
\begin{equation} \label{eq: Lapl exponent small jumps}
\Psi''(q) 
=
\int_{y\le -\log(2)} (\mathrm{e}^{qy}-1) \mathrm{e}^y \nu_{\theta}(-(\mathrm{e}^y-1))\mathrm{d}y, \quad q>-1.
\end{equation}
Hence, by the expression of $\nu_{\theta}$ in \eqref{eq:nu},
\[
\Psi''(\infty) = - c_+ \int_{y\le -\log(2)} \frac{\mathrm{e}^y}{(1-\mathrm{e}^y)^{1+\theta}} \mathrm{d}y = \frac{c_+}{\theta}( 1-2^{\theta}).
\]
It remains to compute $q\mapsto \Psi'(q-1-\theta)$. By independence of $\xi'$ and $\xi''$, we have for all $-1<q<\theta$, $\Psi'(q)=\Psi^0(q)-\Psi''(q)$. Equations \eqref{eq: Lapl exponent killed} and \eqref{eq: Lapl exponent small jumps} provide
\begin{equation}
\Psi'(q) 
=
-\frac{c_+}{\theta} 
+ \int_{y>-\log(2)} (\mathrm{e}^{qy}-1)\mathrm{e}^{y}\nu_{\theta}(-(\mathrm{e}^y-1))\mathrm{d}y, \quad -1<q< \theta, \label{eq:psi''}
\end{equation}
This extends analytically to all $q<\theta$. We now fix $q<2\theta+1$, and we want to put $\Psi'(q-1-\theta)$ in a Lévy-Khintchine form. Replacing $q$ by $q-1-\theta$ in \eqref{eq:psi''}, we see that
\[
\Psi'(q-1-\theta) = -\mathbf{k}' + \int_{y>-\log(2)} (\mathrm{e}^{qy}-1)\mathrm{e}^{-\theta y}\nu_{\theta}(-(\mathrm{e}^y-1))\mathrm{d}y.
\]
with 
\begin{equation} \label{eq: k' def}
\mathbf{k}' := \frac{c_+}{\theta} - \int_{-\log(2)}^\infty (1-\mathrm{e}^{(1+\theta)y}) \mathrm{e}^{-\theta y} \nu_{\theta}(-(\mathrm{e}^y-1))\mathrm{d}y.
\end{equation}
In order to retrieve equation \eqref{Lapl loc largest}, it remains to prove that $\mathbf{k}'=\Psi''(\infty)$. The above integral can be computed as follows:
\begin{multline}
\int_{-\log(2)}^\infty (1-\mathrm{e}^{(1+\theta)y}) \mathrm{e}^{-\theta y} \nu_{\theta}(-(\mathrm{e}^y-1))\mathrm{d}y \\
=
c_+ \int_{-\log(2)}^0 (1-\mathrm{e}^{(1+\theta)y}) \frac{\mathrm{e}^{-\theta y}}{(1-\mathrm{e}^y)^{\theta+1}} \mathrm{d}y + c_-\int_{0}^\infty (1-\mathrm{e}^{(1+\theta)y}) \frac{\mathrm{e}^{-\theta y}}{(\mathrm{e}^y-1)^{\theta+1}}\mathrm{d}y.
\label{eq: k' splitting integrals}
\end{multline}
Start with the first integral:
\[
\int_{-\log(2)}^0 (1-\mathrm{e}^{(1+\theta)y}) \frac{\mathrm{e}^{-\theta y}}{(1-\mathrm{e}^y)^{\theta+1}} \mathrm{d}y
=
\int_{-\log(2)}^0 (\mathrm{e}^{-(1+\theta)y}-1) \frac{\mathrm{e}^{y}}{(1-\mathrm{e}^y)^{\theta+1}} \mathrm{d}y.
\]
By integration by parts (integrating $y\mapsto \frac{\mathrm{e}^{y}}{(1-\mathrm{e}^y)^{\theta+1}}$), this is 
\[
\int_{-\log(2)}^0 (1-\mathrm{e}^{(1+\theta)y}) \frac{\mathrm{e}^{-\theta y}}{(1-\mathrm{e}^y)^{\theta+1}} \mathrm{d}y
=
-(2^{1+\theta}-1)\frac{2^\theta}{\theta} + \frac{1+\theta}{\theta} \int_{-\log(2)}^0  \frac{\mathrm{e}^{-(1+\theta)y}}{(1-\mathrm{e}^y)^{\theta}} \mathrm{d}y.
\]
The change of variables $x=\mathrm{e}^y$ provides
\[
\int_{-\log(2)}^0 (1-\mathrm{e}^{(1+\theta)y}) \frac{\mathrm{e}^{-\theta y}}{(1-\mathrm{e}^y)^{\theta+1}} \mathrm{d}y
=
-(2^{1+\theta}-1)\frac{2^\theta}{\theta} + \frac{1+\theta}{\theta} B_{\frac12}(1-\theta, -(1+\theta)),
\]
where $B_{\frac{1}{2}}(x,y):=\displaystyle{\int_0^{1/2} t^{x-1}(1-t)^{y-1} \mathrm{d}t}$ is the incomplete beta function at $\frac12$. A similar calculation gives that the second integral in \eqref{eq: k' splitting integrals} is
\[
\int_{0}^\infty (1-\mathrm{e}^{(1+\theta)y}) \frac{\mathrm{e}^{-\theta y}}{(\mathrm{e}^y-1)^{\theta+1}}\mathrm{d}y
=
-\frac{1+\theta}{\theta} B(2\theta+1,1-\theta),
\]
where $B(x,y):=\displaystyle{\int_0^{1} t^{x-1}(1-t)^{y-1} \mathrm{d}t}$ is the beta function. It is well-known that $B(x,y) = \frac{\Gamma(x)\Gamma(y)}{\Gamma(x+y)}$, hence \eqref{eq: k' splitting integrals} boils down to 
\begin{multline} 
\int_{-\log(2)}^\infty (1-\mathrm{e}^{(1+\theta)y}) \mathrm{e}^{-\theta y} \nu_{\theta}(-(\mathrm{e}^y-1))\mathrm{d}y \\
=
-(2^{1+\theta}-1)\frac{2^\theta}{\theta}c_+ + \frac{1+\theta}{\theta} c_+ B_{\frac12}(1-\theta, -(1+\theta)) - c_- \frac{1+\theta}{\theta} \frac{\Gamma(2\theta+1)\Gamma(1-\theta)}{\Gamma(\theta+2)}
\label{eq: k' beta integrals}
\end{multline}
Now the two-variable function $B_{\frac12}(a,b)$ can be extended analytically to all $a,b\notin -\N$ \emph{via} the identity $aB_{1/2}(a,b+1) - bB_{1/2}(a+1,b) = 2^{-a-b}$ obtained by straightforward integration by parts. We then need to evaluate $B_{1/2}$ at $(-\theta,-\theta)$. But for $q>0$, $B_{1/2}(q,q) = \frac12 B(q,q) = \frac{\Gamma(q)^2}{2\Gamma(2q)}$ by symmetry. Uniqueness of analytic continuation implies that this must still hold for all $q\notin -\N$. This allows to write that $B_{1/2}(-\theta,-\theta) = \frac{\Gamma(-\theta)^2}{2\Gamma(-2\theta)}$, and therefore $(1+\theta) B_{\frac12}(1-\theta, -(1+\theta)) = 2^{1+2\theta} + \theta \frac{\Gamma(-\theta)^2}{2\Gamma(-2\theta)}$. In total, \eqref{eq: k' beta integrals} becomes
\[
\int_{-\log(2)}^\infty (1-\mathrm{e}^{(1+\theta)y}) \mathrm{e}^{-\theta y} \nu_{\theta}(-(\mathrm{e}^y-1))\mathrm{d}y 
=
\frac{2^{\theta}}{\theta}c_+ +c_+ \frac{\Gamma(-\theta)^2}{2\Gamma(-2\theta)} - c_- \frac{1+\theta}{\theta} \frac{\Gamma(2\theta+1)\Gamma(1-\theta)}{\Gamma(\theta+2)}.
\]
Recall that $c_+=\frac{\Gamma(1+\theta)}{\pi}$ and $c_-=-\frac{\Gamma(1+\theta)}{\pi} \cos(\pi\theta)$. We can then see after some calculations, using Euler's reflection formula, that the last two terms cancel out, leaving 
\[
\int_{-\log(2)}^\infty (1-\mathrm{e}^{(1+\theta)y}) \mathrm{e}^{-\theta y} \nu_{\theta}(-(\mathrm{e}^y-1))\mathrm{d}y 
=
\frac{2^{\theta}}{\theta}c_+.
\]
We finally come back to \eqref{eq: k' def} and obtain $\mathbf{k}' = \frac{c_+}{\theta}(1-2^{\theta})=\Psi''(\infty)$. This concludes the proof as we retrieve the Laplace exponent $\Psi$ of \eqref{Lapl loc largest}.
\end{proof}

\medskip
Our purpose is now to describe the law of the \emph{daughter} excursions of the locally largest one. We want to prove that the excursions which get detached along the way up to $z^\bullet$ are conditionally independent, and distributed as half-plane excursions with prescribed displacement. We rank these detached excursions $(\epsilon_i,i\ge 1)$ by descending order of their sizes $(z_i,i\ge 1)$ and we write $(a_i,i\ge 1)$ for the corresponding heights where they appear. We stress that both $(z_i, i\ge 1)$ and $(a_i, i\ge 1)$ are measurable with respect to $\Xi$, as they correspond to (opposite) jump sizes and jump times of $\Xi$. Let $z\in \R\backslash\{0\}$. 
\begin{Prop} \label{prop: offspring loc largest}
Under $\gamma^\alpha_z$, conditionally on $(z_i,a_i)_{i\ge 1}$, the excursions $(\epsilon_i)_{i\ge 1}$, are independent and each $\epsilon_i$ has law $\gamma^\alpha_{z_i}$.
\end{Prop}
\begin{proof}
We repeat the main ideas of \cite[Theorem 3.7]{AD}. Fix $n\in\N$ and take some measurable functions $f_1,\ldots, f_n: U \to \R_+$ and $g_1,\ldots, g_n: \R\times\R_+ \to \R_+$. Let $(\epsilon^{(a)}_i)_{i\ge 1}$ the sequence of sub-excursions detached from $\Xi$ below $a$, ranked by descending order of the absolute value of their sizes $z^{(a)}_i$, and let $b_i$ the corresponding jump time. It is enough to prove the claim for the $n$ first largest excursions below level $a$, namely:
\begin{equation} \label{eq: offspring n largest}
\gamma^\alpha_z\Big[ \mathds{1}_{\{a<\Im(\zb)\}}\prod_{i=1}^n f_i(\epsilon^{(a)}_i)g_i(z^{(a)}_i,b_i) \Big]
=
\gamma^\alpha_z\Big[ \mathds{1}_{a<\Im(\zb)}\prod_{i=1}^n \gamma^\alpha_{z_i^{(a)}}(f_i(\epsilon^{(a)}_i))g_i(z_i^{(a)},b_i) \Big].
\end{equation}
In view of applying the key formula, write 
\[
\prod_{i=1}^n f_i(\epsilon^{(a)}_i)g_i(z_i^{(a)},b_i)
=
\sum_{0<s^+ < L_{R(u)}^a} C_{\sgn{\Delta e^{a}_s}}^{-1} |\Delta e^{a}_s|^{1+\theta} F({\mathfrak u}_1^s, {\mathfrak u}_2^s),
\]

\noindent where
\[
F({\mathfrak u}_1^s, {\mathfrak u}_2^s) = C_{\sgn{\Delta e^{a}_s}} |\Delta e^{a}_s|^{-1-\theta}  \prod_{i=1}^nf_i(\epsilon^{(a)}_i)g_i(z^{(a)}_i,b_i) \mathds{1}_{\{ e_a^{(\tb)} =e_s^a \}}.
\]
Hence by \eqref{eq: key formula excursions},
\begin{multline*}
\gamma^\alpha_z\Big[\mathds{1}_{\{a<\Im(\zb)\}} \prod_{i=1}^n f_i(\epsilon^{(a)}_i)g_i(z_i^{(a)},b_i) \Big] \\
=
\Eb\Big[ \frac{C_{\sgn{\eta_a}}}{C_{\sgn{z}}}\frac{z^{1+\theta}}{\eta_a^{1+\theta}}\prod_{i=1}^n f_i(\varepsilon_i)g_i(z(\varepsilon_i),b(\varepsilon_i)) \mathds{1}_{\forall \, b  \in [0,a], \; |{\eta}_b| \geq  |\Delta {\eta}_{b}|}  \Big],
\end{multline*}
where the $\varepsilon_i,i\ge 1,$ form the ranked excursions of ${\mathfrak h}_1$ and ${\mathfrak h}_2$ above the future infimum $b(\varepsilon_i)$ of their imaginary parts before leaving $\{\Im\le a\}$ forever and $z(\varepsilon_i)$ is the size of the excursion  $\varepsilon_i$. But if we call $(b,\efrak_b)$ the process of excursions of $\hfrak_1$ or $\hfrak_2$ above the future infimum of their imaginary parts, then upon time-reversal, Lévy's theorem (Theorem VI.2.3 in \cite{RY}) and the Lévy property of $X^\alpha$ imply that it is a Poisson point process with intensity $2\mathds{1}_{\R_+}\mathrm{d}b\,  \n^\alpha_+(\mathrm{d}u)$. Conditionally on the heights and sizes $\{(b,z({\mathfrak e}_b)),\, b\ge 0 \}$, the excursions ${\mathfrak e}_b$ are independent with law $\gamma^\alpha_{z({\mathfrak e}_b)}$, whence
\begin{multline*}
\gamma^\alpha_z\Big[ \mathds{1}_{\{a<\Im(\zb)\}}\prod_{i=1}^n f_i(e^{(a)}_i)g_i(z_i^{(a)},b_i) \Big]
\\=
\Eb\Big[ \frac{C_{\sgn{\eta_a}}}{C_{\sgn{z}}}\frac{z^{1+\theta}}{\eta_a^{1+\theta}}\prod_{i=1}^n \gamma^\alpha_{z(\varepsilon_i)}(f_i(\varepsilon_i))g_i(z(\varepsilon_i),b(\varepsilon_i)) \mathds{1}_{\forall \, b  \in [0,a], \; |{\eta}_b| \geq  |\Delta {\eta}_{b}|}  \Big].
\end{multline*}
A backwards application of the key formula yields the claim \eqref{eq: offspring n largest}.

\end{proof}

\subsection{The temporal martingale} \label{sec: temporal mart excursions}
We first point out a temporal martingale for excursions cut at heights. For $a\ge 0$, recall that $(e_i^{a,+})_{i\ge 1}$ stands for the possible excursions that $u$ makes above $a$, ranked by descending sizes. Recall the notation $(\Gcal_a)_{a\ge 0}$ for filtration of events occurring below level $a$, and the definition of the constants $C_{\pm}$ introduced in the disintegration property \ref{disintegration prop}.
\begin{Prop} \label{prop:temporal mart}
The process
\[\Mcal^{\alpha,+}_a = \sum_{i\ge 1} C^{-1}_{\sgn{\Delta e_i^{a,+}}} \cdot |\Delta e^{a,+}_i|^{1+\theta}, \quad a\ge 0,\]
is a $(\Gcal_a)$-martingale.
\end{Prop}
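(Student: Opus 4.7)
The proof combines two ingredients. The first is a branching property at level $a$: conditionally on $\Gcal_a$, the excursions $(e_i^{a,+})_i$ of $u$ above level $a$ are mutually independent, and each, translated so that its starting time is reset to $0$ and its $y$-coordinate is shifted down by $a$, is distributed as $\gtilde^{\alpha}_{\Delta e_i^{a,+}}$. This is immediate from the product form $\n_+(\mathrm{d}x,\mathrm{d}y) = n(\mathrm{d}y)\,\Pb((X^{\alpha})^{R(y)}\in \mathrm{d}x)$ of the excursion measure: It\^o excursion theory supplies the conditional independence of the $y$-excursions above level $a$, while the L\'evy property of $X^{\alpha}$ gives the conditional independence of the corresponding $x$-pieces; disintegrating each $x$-piece by its endpoint then yields the law $\gtilde^{\alpha}_{\cdot}$. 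The second ingredient is the key identity
\[
\Eb_{\gtilde^{\alpha}_z}[\Mcal_b] \;=\; C^{-1}_{\sgn{z}}\,|z|^{1+\theta}, \qquad z\neq 0,\ b\ge 0.
\]

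Granted these, the martingale property follows at once. For $a'>a\ge 0$, write $\Mcal^{(i)}_{a'-a}$ for the sum $\sum C^{-1}_{\sgn{\Delta}}|\Delta|^{1+\theta}$ over the excursions of $e_i^{a,+}$ above the further level $a'-a$; then $\Mcal_{a'} = \sum_i \Mcal^{(i)}_{a'-a}$, and combining the branching property with the key identity yields
\[
\Eb[\Mcal_{a'}\mid \Gcal_a] \;=\; \sum_i \Eb_{\gtilde^{\alpha}_{\Delta e_i^{a,+}}}[\Mcal_{a'-a}] \;=\; \sum_i C^{-1}_{\sgn{\Delta e_i^{a,+}}}\,|\Delta e_i^{a,+}|^{1+\theta} \;=\; \Mcal_a.
\]

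It remains to prove the key identity. The case $b=0$ is immediate: the BES$^3$ bridge visits $0$ only at its endpoints, so under $\gtilde^{\alpha}_z$ there is a single excursion above level $0$, of size $z$, and $\Mcal_0 = C^{-1}_{\sgn{z}}|z|^{1+\theta}$ almost surely. For $b>0$, the self-similar scaling $(x(t),y(t))\mapsto (x(|z|^\alpha t)/|z|,\, y(|z|^\alpha t)/|z|^\theta)$ sends $\gtilde^{\alpha}_z$ to $\gtilde^{\alpha}_{\sgn{z}}$ and transforms $\Mcal_b$ into $|z|^{1+\theta}\,\Mcal_{b/|z|^\theta}$, reducing the problem to showing that $b\mapsto \Eb_{\gtilde^{\alpha}_{\pm 1}}[\Mcal_b]$ is constant, equal to $C^{-1}_{\pm}$. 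The cleanest way to obtain this constancy is to recognize $\Mcal_a$ as the process of Corollary \ref{cor:supermartingale} for the signed growth-fragmentation cell system driven by the locally largest fragment $\Xi$, whose Lamperti--Kiu characteristics are described in Theorem \ref{thm:loc largest}, and then to verify that the triplet $(v_+,v_-,\omega)=(C_+^{-1},C_-^{-1},1+\theta)$ is admissible in the sense of Section \ref{sec:universality}, i.e.\ that the signed cumulants $\Kcal_\pm$ of Corollary \ref{cor:cumulant} vanish at $1+\theta$. The main obstacle is exactly this admissibility check: it demands an explicit computation with the Laplace exponent $\Psi$ of Theorem \ref{thm:loc largest} together with the sign-change characteristics of the Lamperti--Kiu MAP of $\Xi$, and amounts to a delicate balance between the constants $C_\pm$ coming from the disintegration of $\n_+$ and the stable characteristics $c_\pm$. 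Once admissibility is granted, the supermartingale property supplied by Corollary \ref{cor:supermartingale} is upgraded to a genuine martingale by the standard check $\alpha\,\kappa_\pm'(1+\theta)>0$ (cf.\ point 5 of the remark following Theorem \ref{thm:spine}).
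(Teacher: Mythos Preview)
Your decomposition into a branching property at level $a$ plus the key identity $\Eb_{\gtilde^{\alpha}_z}[\Mcal_b] = C_{\sgn z}^{-1}|z|^{1+\theta}$ is indeed the skeleton of the argument, and the self-similarity reduction is correct. The problem is how you propose to establish the key identity.

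Your route through the abstract growth-fragmentation machinery runs against the paper's own logic. Admissibility of $(C_+^{-1}, C_-^{-1}, 1+\theta)$ is proved in the final theorem of Section~\ref{sec:excursion} precisely by invoking Proposition~\ref{prop:temporal mart}: one lets $a\to\infty$ in the martingale relation to extract the eigenvalue equation for $m(1+\theta)$. So for your argument to stand on its own you would have to verify admissibility independently, by a direct computation of $\Kcal_\pm(1+\theta)$ from the Lamperti exponent $\Psi$ of Theorem~\ref{thm:loc largest} together with its analogue for $z<0$ (which is only alluded to in a remark and never written out). You yourself flag this as ``the main obstacle'' and do not carry it out; that is the gap. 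It is not a formality --- it asks for a nontrivial match between the disintegration constants $C_\pm$ of Proposition~\ref{disintegration prop} and the stable parameters $c_\pm$. Your approach also presupposes that the excursion sizes already form a signed growth-fragmentation driven by $\Xi$, which in the paper is established only after Proposition~\ref{prop:temporal mart}.

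The argument the paper defers to (from \cite{ADS}) is direct and bypasses all of this. The key identity is obtained straight from the excursion-theoretic description --- the product form of $\n_+$, the Markov property, and the disintegration of Proposition~\ref{disintegration prop} --- so that the constants $C_\pm$ enter only as the normalisations in the disintegration, and no appeal to admissibility or to the abstract spine machinery is needed. That is why the paper can then use Proposition~\ref{prop:temporal mart} as the \emph{input} for proving admissibility, rather than the other way round.
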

\noindent This is a direct corollary of the key formula (taking $F=1$ in \cref{lem: key formula}), and the branching property of excursions above levels in \cref{branching gamma}. We note that -- once we establish that the excursion sizes form a growth-fragmentation -- \cref{prop:temporal mart} gives an example where the supermartingale in \cref{cor:supermartingale} is a martingale. However, temporal analogues of the genealogical martingales of \cref{sec:genealogical martingale} are in general \emph{not} martingales (see \cref{Rk: spine thm}(v)). 

The martingale $\Mcal^{\alpha,+}$ points at a natural change of measure. Recall the definition of $u^{<a}$ in Section \ref{sec: key formula}, and take $z\ne 0$. By Kolmogorov extension theorem, we may define on the same probability space a process $({\mathfrak U}_a^z,\, a> 0)$ such that for any $a> 0$, the law of ${\mathfrak U}_a^z$ is that of $u^{<a}$ under the probability measure $C_{\sgn{z}}z^{-1-\theta} {\mathcal M}^\alpha_a \mathrm{d}\gamma_z$. Our goal is to describe the law of $({\mathfrak U}_a^z, a\ge 0)$.

Our description involves the processes $\hfrak_1$ and $\hfrak_2^z$ of \cref{sec: key formula}. Let $a>0$ and  recall from \eqref{eq: A_i(t) time change} the definition of the time-changes $A_1$ and $A_2$ with respect to level $a$ (and their inverses $\tau_1$ and $\tau_2$). Set also $A_i(\infty) = \lim_{t\to\infty} A_i(t)$, $i=1,2$. Under $\Pb$, we define $\widetilde {\mathfrak U}_a^z$ as the process obtained by concatenating $\hfrak_1$ and $\hfrak^z_2$ when they leave $\{\Im(z)\le a\}$ forever, and removing everything above level $a$. More precisely, 
$$
\widetilde {\mathfrak U}_a^z(t)
:=
\left\{
\begin{array}{ll}
{\mathfrak h}_1(\tau_1(t)) &\hbox{ if } t \in \added{[0,A_1(\infty))}, \\
{\mathfrak h}_2^z(\tau_2(A_1(\infty)+A_2(\infty)-t)) &\hbox{ if } t \in [A_1(\infty),A_1(\infty)+A_2(\infty)],
\end{array}
\right. 
$$
\added{with the convention that ${\mathfrak h}_2^z(\tau_2(A_2(\infty))) := {\mathfrak h}_2^z(\tau_2(A_2(\infty))^-)$}. Thus $\widetilde {\mathfrak U}^z_a$ follows the trajectory of ${\mathfrak h}_1$ below level $a$ until it leaves \added{$\{\Im \le a\}$} forever, makes a jump to the last passage point at $a$ of ${\mathfrak h}_2^z$, then follows the time-reversed trajectory of ${\mathfrak h}_2^z$ below $a$ and finally ends up at $z$.

\begin{Thm} \label{thm:mu_z}
For any $z\ne 0$, the process $({\mathfrak U}_a^z,a>0)$ is distributed as $(\widetilde {\mathfrak U}_a^z,a>0)$.  
\end{Thm}

\begin{figure}[h] 
\begin{center}
\includegraphics[scale=0.8]{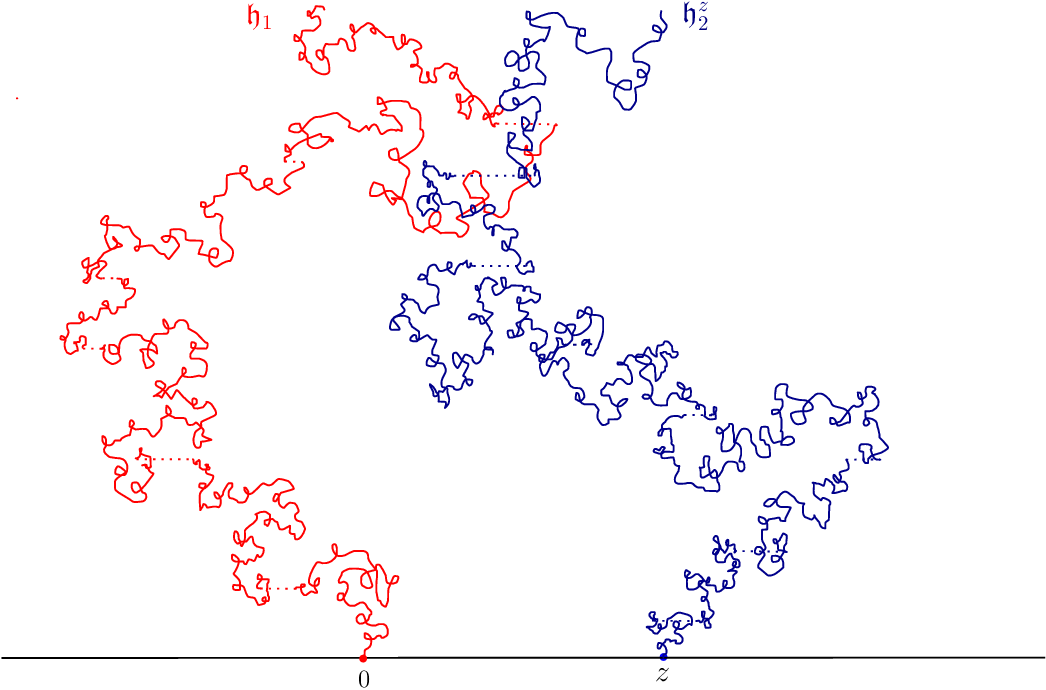}
\end{center}
\caption{Splitting the excursion according to the change of measure with respect to the martingale $\Mcal^{\alpha,+}$ (in the case when $X^{\alpha}$ is spectrally negative). The red and blue trajectories are independent and evolve as $\hfrak_1$ and $\hfrak^z_2$ respectively.}
\label{fig:change-measure}
\end{figure}

\noindent Under the change of measure, the path $u$ therefore splits into two infinite trajectories from $0$ and $z$ to $\infty$. See Figure \ref{fig:change-measure} (the picture shows the spectrally negative case).

\begin{proof}
The claim is included in the key formula \eqref{eq: key formula excursions}, by taking for $F({\mathfrak u}_1^s, {\mathfrak u}_2^s)$ and all $s$ some measurable function of $u^{<a}$.
\end{proof}

\begin{Rk} \label{rk: spine excursions}
\begin{enumerate}
\item Assuming that the \emph{sizes} of the excursions cut at heights form a signed growth-fragmentation process $\Xbf$ (this will be stated in the following section),  Theorem \ref{thm:mu_z} describes the law of the \emph{spine} defined in section \ref{sec:change measure}. Indeed, specifying the key formula \eqref{eq: key formula excursions} in the case when $F({\mathfrak u}_1^s, {\mathfrak u}_2^s)$ is a function of the size $\Delta e^a_s$, we get that the value of the spine at height $a$ for $\Xbf$ is given by looking at the size ${\mathfrak h}_2^z(\tau_2(A_2(\infty)))-{\mathfrak h}_1(\tau_1(A_1(\infty)))$ of the unbounded excursion of $\widetilde{\mathfrak U}_a^z$ above $a$. As the height $a$ increases, the spine is therefore given by the time-reversal of the difference of (the real part of) trajectories $R^{\alpha}$ and $L^{\alpha}$ coming down from infinity, taken at a Brownian hitting time. For this reason, we say that the spine amounts to \emph{targeting a point at infinity} (in the picture given by $\hfrak_1$, $\hfrak^z_2$). Finally, since the latter hitting times are subordinators of index $\frac12$ and $X^{\alpha}$ is an $\alpha$--stable Lévy process, this yields a stable Lévy process of index $\theta=\frac{\alpha}{2}$, and with positivity parameter $\rho':=1-\rho$.
\item We remark that, in the spectrally negative case, this is consistent with the martingales appearing in \cite{BBCK}. Indeed, they have the same form with a power given by $\omega_+ = \theta+\frac32 = (\theta+1)+\frac12$. Hence $\omega_+$ is the power appearing in $M_a$ plus one half. We can naturally retrieve this extra $1/2$ in our setting as follows. First, under $\gamma^{\alpha}_z$ (with $z>0$), let $\Xbf^+$ denote the family of \emph{positive} excursions obtained by removing from $\Xbf$ the negative sizes (together with their progeny). Then for any nonnegative measurable function $f$, one can leverage the \emph{many-to-one} formula given in \cref{lem: key formula} and the description of the spine in \cref{rk: spine excursions}(i) to express
\begin{equation} \label{eq: many-to-one h-transform}
\gamma^{\alpha}_z\left(\sum_{e\in \Xbf^+(a)} |\Delta e|^{\theta+3/2}f(\Delta e) \right)
=
E_z\left(f(Y_0^{\theta}(a))\sqrt{|Y_0^{\theta}(a)|}\right), 
\end{equation}
where under $P_z$, $Y_0^{\theta}$ is the $\theta$--stable Lévy process killed below $0$. On the other hand, since we have chosen $c_+$ and $c_-$ so that $\theta(1-\rho') = 1/2$, the $h$-transform used to condition the latter $\theta$-stable process to remain positive is given by $x\mapsto x^{\theta(1-\rho')}=\sqrt{x}$ (see \cite{CC}). 
Hence \eqref{eq: many-to-one h-transform} rewrites
\[
\gamma^{\alpha}_z\left(\sum_{e\in \Xbf^+(a)} |\Delta e|^{\theta+3/2}f(\Delta e) \right)
=
E_z\left(f(Y^{\theta}_+(a))\right),
\]
where under $P_z$, $Y_0^{\theta}$ is the $\theta$--stable Lévy process conditioned to remain positive.
We thus recovered (embedded in $\Xbf^+$) both the martingale exponent and the spine in the positive growth-fragmentation introduced in \cite[Proposition 5.2]{BBCK} (in the case $1/2<\theta<1$).
\end{enumerate}
\end{Rk}

\subsection{The growth-fragmentation embedded in half-planar excursions} \label{sec: GF excursions}
We now turn to the description of the cell system in terms of a growth-fragmentation process. The main results hold in general, but in order to retrieve the growth-fragmentation processes with no killing introduced by \cite{BBCK} for $\frac12<\theta<1$, we focus on the case when $X^\alpha$ is \emph{spectrally negative}, where the law of the locally largest fragment was explicited in \cref{thm:loc largest}. Recall that this amounts to set the positivity parameter $\rho$ of $X^{\alpha}$ so that $\theta\rho=\frac12$. On the other hand, let $(Z_a)_{0\le a<\zeta}$ the $\theta$--self-similar Markov process started from $z>0$ defined in \cref{thm:loc largest}, and extend the construction to $z<0$ by \cref{rk: loc largest negative}. Finally, construct the signed growth-fragmentation $\Zbf$ driven by $Z$.
Building on results from previous sections, we observe the following growth-fragmentation. 
\begin{Thm}
Under $\gamma^\alpha_z$, 
\[
\left(\left\{\left\{ \Delta e_i^{a,+}, i\ge 1\right\} \right\}, a\ge 0\right) \overset{\Lcal}{=}  (\Zbf(a), a\ge 0) .
\]
In particular, the process of the sizes of excursions cut at heights is a signed growth-fragmentation process.
\end{Thm}
\begin{proof}
The claim is essentially included in our work from the previous sections. We argue on the event that there is no loop above any level (\cref{loop}), that local minima of $y$ are distinct, and that there is no dislocation into two excursions with the same size, which has full $\gamma^\alpha_z$--probability. First, \cref{thm:loc largest} gives that, under $\gamma^\alpha_z$ the law of the locally largest fragment $\Xi$ in the stable-Brownian excursion is that of $Z$. Secondly, the conditional independence and conditional law of the \emph{offspring} of $\Xi$ was established in \cref{prop: offspring loc largest}. The only non-trivial statement is that we indeed recover all the excursions in the genealogy of $Z$. This statement does not bring any new idea as it is merely technical, so that we refer to the Brownian case \cite[Theorem 4.1]{AD}. Note that the discontinuities of $u$ do not conflict with conservativity at times when the growth-fragmentation cells divide: indeed, by independence, they almost surely occur at levels which are not local minima for the Brownian motion. 
\end{proof}
\noindent We now determine the spine under the change of measure $\Phat_z$.
\begin{Thm}
The vector $(C_+^{-1},C_-^{-1},\theta+1)$ is admissible for the locally largest evolution $\Xi$. Under $\Phat_z$, the spine $\Xhat$ defined in section \ref{sec:change measure} evolves as a $\theta$--stable Lévy process with Lévy measure $2\nu_{\theta}(-y)\mathrm{d}y$ and hence positivity parameter $\rho'=1-\rho$.
\end{Thm}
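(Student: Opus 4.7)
The statement has two parts — admissibility of the triplet $(C_+^{-1}, C_-^{-1}, \theta+1)$ and the identification of the law of the spine $\Xhat$ — which I will handle separately.

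For admissibility, Theorem \ref{thm:universality M(n)} allows me to pick any convenient driving cell, so I work with $\Xi$, which by Theorem \ref{thm:loc largest} is a constant-sign self-similar Markov process: we are in the setting of Section \ref{sec:constant sign}. The cleanest route uses Proposition \ref{prop:temporal mart}: the martingale
$$
\Mcal_a = \sum_{i\ge 1} C_{\sgn{\Delta e_i^{a,+}}}^{-1} |\Delta e_i^{a,+}|^{1+\theta}
$$
is exactly $\sum_i v_{\sgn{X_i(a)}}|X_i(a)|^{\omega}$ with $(v_+, v_-, \omega) = (C_+^{-1}, C_-^{-1}, \theta+1)$. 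Splitting this sum along the Eve cell and its first-generation daughters and using the branching property (Theorem \ref{thm:branching}) together with the martingale property of $\Mcal$ under each $\Pbf_d$, I obtain
$$
v_{\sgn z}|z|^{\omega} = \Ecal_z\!\left[v_{\sgn{\Xi(a)}}|\Xi(a)|^{\omega} + \sum_{d\,:\,b_d\le a} v_{\sgn d}|d|^{\omega}\right].
$$
Letting $a \to \infty$, the Eve-cell term vanishes (since $\omega > 0$ and $\Xi$ is absorbed at $0$) and monotone convergence on the daughter sum yields the Perron--Frobenius identity $m(\omega)(v_+, v_-)^{\top} = (v_+, v_-)^{\top}$, i.e.\ admissibility. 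A more computational alternative is to verify $\Kcal_{\pm}(\theta+1) = 0$ directly from the Laplace exponent $\Psi$ of \eqref{Lapl loc largest}, where the cancellation $|\mathrm{e}^y-1|^{\theta+1}\nu_{\theta}(-(\mathrm{e}^y-1)) = c_{\sgn(1-\mathrm{e}^y)}$ collapses all integrals to a closed-form identity between $\Psi(\theta+1)$, the constants $c_\pm$, and the ratio $C_+/C_-$.

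For the spine law, Theorem \ref{thm:spine} describes $\Xhat$ under $\Phat_z$ as the self-similar Markov process with Lamperti--Kiu matrix exponent $\Fhat$ from Lemma \ref{lem:Fhat}. A concrete identification uses the coincidence of $\Phat_z$ with the excursion change of measure $\mu_z$ (both driven by $\Mcal_a$, via Proposition \ref{prop:spine-temporal} and the definition of $\mu_z$): under $\mu_z$, Theorem \ref{thm:mu_z} splits the path at the locally largest trajectory into two independent half-planar paths $L^{\alpha}$ and $R^{\alpha}$ with $\alpha$-stable real parts and BES(3) imaginary parts. The spine at height $a$ is then the signed difference of the real parts of $L^{\alpha}$ and $R^{\alpha}$ at their respective hitting times of $\{\Im = a\}$. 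Since each $\alpha$-stable process sampled at such a BES(3) hitting time is a $\theta = \alpha/2$-stable Lévy process with Lévy measure $\nu_{\theta}$, summing the two independent contributions yields the asserted $\theta$-stable Lévy process with Lévy measure $2\nu_{\theta}$.

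The main obstacle lies in the second part: the subordination heuristic is delicate because the BES(3) hitting-time process is not itself a Lévy subordinator, so the matching of Lévy measures must be justified via explicit Laplace-transform computations on the BES(3) hitting times. Alternatively, one may bypass this by computing $\Fhat(q)$ entry-by-entry from Lemma \ref{lem:Fhat}, using the cumulants $\kappa_{\pm}$ of $\Xbf^{\pm}$ from \cite{BBCK} and the signed cumulants $\Kcal_{\pm}$ obtained in Step 1, and matching the resulting matrix exponent against the Lamperti--Kiu MAP of a $\theta$-stable Lévy process with Lévy measure $2\nu_{\theta}$ as described in \cite{KKPW}. Once Proposition \ref{prop:temporal mart} is in hand the admissibility step is essentially branching-property bookkeeping.
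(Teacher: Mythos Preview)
Your proposal is correct and follows essentially the same approach as the paper. For admissibility you use the temporal martingale $\Mcal_a$ from Proposition~\ref{prop:temporal mart}, split over the Eve cell and its first-generation offspring, and let $a\to\infty$ --- exactly the paper's argument; for the spine you offer the same two routes the paper does (matching $\Fhat$ against the Lamperti--Kiu MAP of a $\theta$--stable process, or using the excursion description of $\mu_z$ in Theorem~\ref{thm:mu_z}), just with the order of preference reversed and with a more explicit acknowledgement of the subordination subtlety that the paper glosses over.
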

In particular, the positive growth-fragmentation $\Xbf^+$ obtained by removing from $\Xbf$ all the negative cells and their descendants is the same as that of \cite{BBCK}, for the appropriate self-similarity index $\frac12<\theta<1$. Indeed, by the many-to-one lemma, the law of the spine $\Xhat^+$ for the cell system of \emph{positive} masses is that of $\Xhat$ conditioned to stay positive, hence is distributed as the spine appearing in \cite{BBCK} for $\frac12<\theta<1$ (see \cref{rk: spine excursions} (ii)). Yet \cite[Theorem 5.1]{BBCK} entails that the spine characterises the law of the growth-fragmentation, and thus $\Xbf^+$ has the law of the growth-fragmentation process described in \cite{BBCK} for $\frac12<\theta<1$.

\begin{proof}
There are several ways to prove admissibility. For example, we use that $(\Mcal^{\alpha,+}_a, a\ge 0)$ is a martingale (Proposition \ref{prop:temporal mart}), and we condition on the first generation (the offspring of $\Xi$) to obtain
\[
C_{\sgn{z}}^{-1}|z|^{1+\theta}
=
\gamma^\alpha_z[\Mcal^{\alpha,+}_a]
=
C_+^{-1} \gamma^\alpha_z[|\Xi(a)|^{1+\theta}\mathds{1}_{a<\Im(\zb)}] + \gamma^\alpha_z\left[\sum_{s<a} C_{\sgn{-\Delta \Xi(s)}}^{-1} |\Delta \Xi(s)|^{1+\theta} \right].
\]
We then let $a$ tend to infinity and get that 
\[
\gamma^\alpha_z\left[\sum_{s>0} C_{\sgn{-\Delta \Xi(s)}}^{-1} |\Delta \Xi(s)|^{1+\theta} \right]
=
C_{\sgn{z}}^{-1}|z|^{1+\theta}.
\]
The Lamperti-Kiu representation of stable processes was established in \cite{CPR} (see Corollary 11): it is then a simple check to see that Theorem \ref{thm:spine} gives the same matrix exponent. Alternatively, we can use the description of Theorem \ref{thm:mu_z} of the spine as the difference of two $\theta$--stable processes, together with a version of Proposition \ref{prop:spine-temporal}.
\end{proof}

\subsection{Conditioning to continuously absorb at the origin} \label{sec: martingale absorption excursions}
We conclude by revealing another martingale for the growth-fragmentation cell system. Unlike in \cref{sec: temporal mart excursions}, this martingale will only be genealogical (in the form of \cref{thm:frag martingale}). We will see that the martingale converges $\gamma^\alpha_z$--almost surely towards the duration of the excursion, and describe the spine defined with respect to this change of measure.

We start by observing from \eqref{gamma tilde} that the law of the duration $R$ under $\gamma^\alpha_1$ is
\[
\gamma^\alpha_1(R\in\mathrm{d}r) = \frac{p^{\alpha}_1(r^{-1/\alpha})}{2\sqrt{2\pi}C_{+}r^{3/2+1/\alpha}} \mathrm{d}r,
\]
and likewise
\[
\gamma^\alpha_{-1}(R\in\mathrm{d}r) = \frac{p^{\alpha}_1(-r^{-1/\alpha})}{2\sqrt{2\pi}C_{-}r^{3/2+1/\alpha}} \mathrm{d}r.
\]
Moreover, the same equation shows that the duration $R$ under $\gamma^\alpha_z$, $z\ne0$, has the law of $|z|^\alpha R_{\sgn{z}}$, where $R_+$ and $R_-$ denote the law of $R$ under $\gamma^\alpha_1$ and $\gamma^\alpha_{-1}$ respectively. Observe that, because $\alpha<2$, $R_+$ and $R_-$ both have finite expectations that we denote by $w_+$ and $w_-$. 

We take the point of view of \cref{sec: GF excursions}, and we index the genealogy of the locally largest fragment $\Xi$ by the tree $\Ub$. In accordance with \cref{sec:genealogical martingale}, the collection of sizes at generation $n$ will be written $(\Xcal_u,|u|=n)$, and the $\sigma$--field generated by $(\Xcal_u,|u|\le n)$ denoted $\Gscr_n$. We can now claim:
\begin{Prop} Under $\gamma^\alpha_z$, the process
\[
\Mcal^{\alpha,-}(n) 
=
\sum_{|u|=n} w_{\sgn{\Xcal_u(0)}} |\Xcal_u(0)|^{\alpha},
\]
is a $(\mathscr{G}_n)$--martingale. Furthermore, it is uniformly integrable and converges $\gamma^\alpha_z$--almost surely and in $L^1$ to the duration $R$ of the excursion.
\end{Prop}
\begin{proof}
All the claims are a consequence of Lévy's theorem and the following identity:
\begin{equation} \label{eq: martingale M^- Lévy}
\Mcal^{\alpha,-}(n) := \gamma^\alpha_z(R \, | \, \Gscr_n), \quad \gamma^\alpha_z-\text{a.s.}
\end{equation}
Indeed, Lévy's theorem then implies that $\Mcal^{\alpha,-}(n) \to \gamma^\alpha_z(R \, | \, \Gscr_\infty)$ (a.s. and in $L^1$), where $\Gscr_\infty = \bigcup_{n\ge 0} \Gscr_n$. But since $R$ is $\Gscr_{\infty}$--measurable, this means $\Mcal^{\alpha,-}(n) \to R$. It remains to prove \eqref{eq: martingale M^- Lévy}. We restrict to $n=0$ (the general case follows by the branching property). We split $R$ over the daughter excursions $e_i$, $i\ge 1$, of $\Xi$. Since the set of times $0\le s\le R$ which are not straddled by such an excursion is Lebesgue-negligible, we have $R = \sum_{i\ge 1} R_i$, where $R_i$ is the duration of the sub-excursion $e_i$. We now use the conditional law of the offspring (\cref{prop: offspring loc largest}) to get
\[
\gamma^\alpha_z (R \, | \, \Gscr_0) = \sum_{i\ge 1} \gamma^\alpha_{\Delta e_i}(R).
\]
The self-similarity property discussed in the paragraph preceding the proposition entails 
\[
\gamma^\alpha_z (R \, | \, \Gscr_0) = \sum_{i\ge 1} w_{\sgn{\Delta e_i}} |\Delta e_i|^{\alpha},
\]
which is precisely \eqref{eq: martingale M^- Lévy} for $n=0$. 
\end{proof}
Now we turn to the description of the spine $\Xhat^-$ with respect to $(\Mcal^{\alpha,-}(n),n\ge 0)$. Recall the general change of measures in \cref{sec:change measure}, from which we borrow the notation, and write $\Phat_z^-$ for the change of measures started from $z$ relative to $\Mcal^{\alpha,-}$. The many-to-one formula in \cref{prop:spine-temporal} gives, for all $t\ge 0$, all nonnegative measurable function $f$ vanishing at $\partial$, and all $\overline{\Fcal}_t$--measurable nonnegative random variable $B_t$: 
\[
w_{\sgn{z}} |z|^{\alpha}\Ehat^-_z\left(f(\Xhat^-(t))B_t\right)
=
\Ecal_z\left( \sum_{i\ge 1} w_{\sgn{X_i(t)}}|X_i(t)|^{\alpha}f(X_i(t))B_t\right).
\]
Denote by $\Phat^+_z$ the change of measures relative to the other martingale $\Mcal^{\alpha,+}$, presented in \cref{prop:temporal mart}, and $\Xhat$ the corresponding spine. Then the many-to-one formula for $\Phat^+_z$ brings to
\begin{equation} \label{eq: law spine Mcal^-}
w_{\sgn{z}}C_{\sgn{z}} |z|^{\theta-1}\Ehat^-_z\big(f(\Xhat^-(t))B_t\big)
=
\Ehat^+_z\Big( w_{\sgn{\Xhat(t)}}C_{\sgn{\Xhat(t)}}|\Xhat(t)|^{\theta-1}f(\Xhat(t))B_t\Big).
\end{equation}
The previous formula extends to functionals of $(\Xhat^-(s),s\le t)$. Hence $\Xhat^-$ is a Doob $h$-transform of $\Xhat$ killed at the origin. Now recall from our description in \cref{thm:mu_z} that $\Xhat$ is a $\theta$--stable process. Thanks to \cite{KRS-conditioned} (in the case $\theta<1$), we may deduce that the law of $\Xhat^-$ is that of the stable process conditioned to absorb continuously at $0$.

We saw that the change of measures relative to $\Mcal^{\alpha,+}$ has a nice interpretation in terms of targeting a point \emph{at infinity} (see \cref{thm:mu_z} and \cref{rk: spine excursions}(i)). This is reminiscent of the situation obtained in the scaling limit from the peeling exploration of large Boltzmann planar maps. In \cite{BBCK}, the authors point out the existence of two martingales $\Mcal^+$ and $\Mcal^-$, for which they give the following descriptions. On the one hand, the spine relative to the martingale $\Mcal^+$ corresponds to targeting a point at infinity in the infinite-volume version of the planar map. On the other hand, the spine relative to $\Mcal^-$ corresponds to targeting a uniform point in the \emph{size-biased} planar map. As it turns out, this image persists in our excursion setting.

\begin{Thm}
Let $z\in\R^*$ and define the probability measure
\[
\overline{\gamma}^\alpha_z(\mathrm{d}t, \mathrm{d}u)
:=
(w_{\sgn{z}}|z|^\alpha)^{-1}\mathds{1}_{0\le t\le R(u)} \mathrm{d}t \gamma^\alpha_z(\mathrm{d}u).
\]
Under $\overline{\gamma}^\alpha_z$, the size $(X^{(t)}(a) := \Delta e^{(t)}_a, a>0)$, of the uniform point $t$ has the law of $\Xhat^-$ under $\gamma^\alpha_z$, \textit{i.e.} that of the $\theta$--stable process conditioned to be absorbed continuously at the origin.
\end{Thm}
\begin{proof}
Let $f$ a nonnegative measurable function defined on the set of finite càdlàg trajectories. From Bismut's description of $\n_+^{\alpha}$ (\cref{Bismut}), we see that 
\[
\overline{\n_+^\alpha}(f(X^{(t)}(b), 0\le b\le a)) \mathds{1}_{y(t)>a})
=
\int_0^\infty \mathrm{d}A \Eb(f(X^1(T^1_{-(A-b)})+X^2(T^2_{-(A-b)}), 0\le b\le a)) \mathds{1}_{A>a}, 
\]
where $X^1$ and $X^2$ are independent copies of $X^\alpha$, and $T^1_{-b}$ and $T^2_{-b}$ are hitting times of $-b$ of independent Brownian motions. With the same notation as in \cref{sec: loc largest evolution}, call $\eta^\theta_b:= X^1(T^1_{-b})+X^2(T^2_{-b})$, which is a $\theta$--stable Lévy process. By the Markov property, and a change of variables, this rewrites
\[
\overline{\n_+^\alpha}(f(X^{(t)}(b), 0\le b\le a)) \mathds{1}_{y(t)>a})
=
\int_a^\infty \mathrm{d}A E_0(h_a(\eta^\theta_{A-a}))
=
\int_0^\infty \mathrm{d}A E_0(h_a(\eta^\theta_A)), 
\]
where $h_a(x) := E_x(f(\eta^\theta_{a-b}, 0\le b\le a))$. Again, applying Bismut's description of $\n^\alpha_+$ backwards, one obtains
\[
\overline{\n_+^\alpha}(f(X^{(t)}(b), 0\le b\le a)) \mathds{1}_{y(t)>a})
=
\n_+^\alpha(h_a(x(R(u))) R(u)).
\]
We then disintegrate $\n_+^\alpha$ over the endpoint:
\[
\overline{\n_+^\alpha}(f(X^{(t)}(b), 0\le b\le a)) \mathds{1}_{y(t)>a})
=
\int_{\R^*} \frac{\mathrm{d}z}{|z|^{1+\theta}} C_{\sgn{z}}\gamma^\alpha_z(R) h_a(z).
\]
Now recall that in our notation, $\gamma^\alpha_z(R) = w_{\sgn{z}} |z|^{\alpha} = w_{\sgn{z}} |z|^{2\theta}$ for all $z\in\R^*$. Hence
\[
\overline{\n_+^\alpha}(f(X^{(t)}(b), 0\le b\le a)) \mathds{1}_{y(t)>a})
=
\int_{\R^*} \frac{\mathrm{d}z}{|z|^{1-\theta}}C_{\sgn{z}} w_{\sgn{z}} E_z(f(\eta^\theta_{a-b}, 0\le b\le a)).
\]
By duality for the Lévy process $\eta^\theta$ (cf. \cite[Section II.1]{Ber}), we can reverse the previous equation in time:
\begin{multline*}
\overline{\n_+^\alpha}(f(X^{(t)}(b), 0\le b\le a)) \mathds{1}_{y(t)>a})
= \\
\int_{\R^*} \frac{\mathrm{d}x}{|x|^{1-\theta}} w_{\sgn{x}} C_{\sgn{x}}\widetilde{E}_x\bigg(\frac{w_{\sgn{\eta_a}}C_{\sgn{\eta_a}}}{w_{\sgn{x}}C_{\sgn{x}}} \frac{|\eta_a|^{\theta-1}}{|x|^{\theta-1}}f(\eta_{b}, 0\le b\le a)\bigg),
\end{multline*}
where under $\widetilde{P}_x$, $\eta$ has the law of $-\eta^\theta$ under $P_{-x}$. On the other hand, disintegrating the left-hand side over the endpoint, we find  
\[
\overline{\n_+^\alpha}(f(X^{(t)}(b), 0\le b\le a)) \mathds{1}_{y(t)>a})
=
\int_{\R^*} \frac{\mathrm{d}x}{|x|^{1-\theta}} C_{\sgn{x}} w_{\sgn{x}} \overline{\gamma}^\alpha_x(f(X^{(t)}(b), 0\le b\le a)) \mathds{1}_{y(t)>a}).
\]
Putting all the pieces together, we end up with 
\begin{multline*}
\int_{\R^*} \frac{\mathrm{d}x}{|x|^{1-\theta}} w_{\sgn{x}} C_{\sgn{x}} \overline{\gamma}^\alpha_x(f(X^{(t)}(b), 0\le b\le a)) \mathds{1}_{y(t)>a})
= \\
\int_{\R^*} \frac{\mathrm{d}x}{|x|^{1-\theta}} w_{\sgn{x}} C_{\sgn{x}}\widetilde{E}_x\bigg(\frac{w_{\sgn{\eta_a}}C_{\sgn{\eta_a}}}{w_{\sgn{x}}C_{\sgn{x}}} \frac{|\eta_a|^{\theta-1}}{|x|^{\theta-1}}f(\eta_{b}, 0\le b\le a)\bigg).
\end{multline*}
Hence for Lebesgue-almost every $x$, 
\begin{equation} \label{eq: law of uniform point}
\overline{\gamma}^\alpha_x(f(X^{(t)}(b), 0\le b\le a)) \mathds{1}_{y(t)>a})
=
\widetilde{E}_x\bigg(\frac{w_{\sgn{\eta_a}}C_{\sgn{\eta_a}}}{w_{\sgn{x}}C_{\sgn{z}}} \frac{|\eta_a|^{\theta-1}}{|x|^{\theta-1}}f(\eta_{b}, 0\le b\le a)\bigg).
\end{equation}
A continuity argument that we feel free to skip ensures that \eqref{eq: law of uniform point} actually holds for all $x\in\R^*$. It remains to notice that the right-hand side of \eqref{eq: law of uniform point} is the same as in the description \eqref{eq: law spine Mcal^-} of the law of $\Xhat^-$ under $\gamma^\alpha_x$. We conclude that under $\overline{\gamma}^\alpha_x$, $X^{(t)}$ evolves as the $\theta$--stable Lévy process $\eta$ conditioned to absorb continuously at the origin.
\end{proof}

\addcontentsline{toc}{chapter}{References}
\bibliographystyle{alpha}
\bibliography{biblio}

\end{document}